\title{Inelastic character of solitons of slowly varying gKdV equations}
\author{Claudio Mu\~noz}
\address{Departamento de Ingenier\'ia Matem\'atica y CMM, Universidad de Chile, Casilla 170-3, Correo 3, Santiago \\ Chile}
\email{cmunoz@dim.uchile.cl}
\date{July, 2011}
\subjclass[2000]{Primary 35Q51, 35Q53; Secondary 37K10, 37K40}
\keywords{gKdV equations, soliton dynamics, slowly varying medium}
\chardef\bslash=`\\ % p. 424, TeXbook
\newtheorem{thm}{Theorem}[section]
\newtheorem{lem}[thm]{Lemma}
\newtheorem{prop}[thm]{Proposition}
\theoremstyle{definition}
\theoremstyle{remark}
\newtheorem{rem}{Remark}[section]
\numberwithin{equation}{section}
\newcommand{\R}{\mathbb{R}}
\newcommand{\la}{\lambda}
\newcommand{\al}{\alpha}
\newcommand{\ga}{\gamma}
\def\bm{\left( \begin{array}{cc}}
\def\endm{\end{array}\right)}
 \providecommand{\abs}[1]{\lvert#1 \rvert}
\newcommand{\ve}{\varepsilon}
\newcommand{\be}{\begin{equation}}
\newcommand{\ee}{\end{equation}}
\newcommand{\ba}{\begin{equation*}}
\newcommand{\ea}{\begin{equation*}}
\newcommand{\bea}{\begin{eqnarray}}
\newcommand{\eea}{\end{eqnarray}}
\newcommand{\bee}{\begin{eqnarray*}}
\newcommand{\eee}{\end{eqnarray*}}
\newcommand{\ben}{\begin{enumerate}}
\newcommand{\een}{\end{enumerate}}
\newcommand{\nonu}{\nonumber}
\newcommand{\eval}[2][\right]{\relax
  \ifx#1\right\relax \left.\fi#2#1\rvert}
\let\abs=\envert
\begin{document}
\begin{abstract}
In this paper we study \emph{soliton-like} solutions of the variable coefficients, subcritical gKdV equation   
$$
u_t + (u_{xx} -\la u  + a(\ve x) u^m )_x =0,\quad \hbox{ in } \quad \R_t\times \R_x,  \quad m=2,3 \hbox{ and } 4,
$$
with $\la\geq 0$, $a(\cdot )\in (1,2)$ a strictly increasing, positive and asymptotically flat potential, and $\ve$ small enough. In \cite{Mu2,Mu3} it was proved the existence of a \emph{pure}, global in time, soliton $u(t)$ of the above equation, satisfying
$$
\lim_{t\to -\infty}\|u(t) - Q_1(\cdot -(1-\la)t) \|_{H^1(\R)} =0, \quad 0\leq \la<1,
$$
provided $\ve$ is small enough. Here $R(t,x) := Q_c(x-(c-\la)t)$ is the soliton of $R_t + (R_{xx} -\la R + R^m)_x =0$. In addition, there exists $\tilde \la \in (0,1)$ such that, for all $0<\la <1$ with $\la\neq \tilde \la$, the solution $u(t)$ satisfies
$$
 \sup_{t\gg \frac 1\ve }\|u(t) - \kappa(\la)Q_{c_\infty}(\cdot -\rho(t)) \|_{H^1(\R)} \lesssim \ve^{1/2}.
$$
Here  $\rho'(t) \sim (c_\infty(\la) -\la)$, with  $\kappa(\la) =2^{-1/(m-1)}$ and $c_\infty(\la)>\la$ in the case $0<\la<\tilde \la$ (\emph{refraction}), and $\kappa(\la) =1$ and $c_\infty(\la)<\la$ in the case $\tilde \la<\la<1$ (\emph{reflection}).   

In this paper we improve our preceding results by proving that the soliton is far from being pure as $t\to +\infty$. Indeed, we give a \emph{lower bound} on the defect induced by the potential $a(\cdot)$, for all $0<\la<1$, $\la\neq \tilde \la$. More precisely, one has
$$
\liminf_{t\to +\infty} \| u(t) -  \kappa_m(\la)Q_{c_\infty}(\cdot -\rho(t)) \|_{H^1(\R)} \gtrsim \ve^{1 +\delta},
$$
for any $\delta>0$ fixed. This bound clarifies the existence of a dispersive tail and the difference with the standard solitons of the constant coefficients, gKdV equation.
\end{abstract}
\maketitle \markboth{Inelastic solitons of gKdV equations} {Claudio Mu\~noz}
\renewcommand{\sectionmark}[1]{}
%\tableofcontents
%%%%%%%%%%%%%%%%%%%%%%%%%%%%%%%%%%%%%%%%%%%%%%%%%%%%%%%%%%
%%%%%%%%%%%%%%%%%%%%%%%%%%%%%%%%%%%%%%%%%%%%%%%%%%%%%%%%%%

\section{Introduction and Main Results}

In this work we continue the study of the dynamics of a soliton-like solution for some generalized Korteweg-de Vries equations (gKdV), started in \cite{Mu2,Mu3}. In those papers the objective was the study of the global behavior of a \emph{generalized soliton solution} for the following subcritical, variable coefficients gKdV equation:
\be\label{aKdV0}
u_t + (u_{xx} -\la u  + a(\ve x) u^m )_x =0,\quad \hbox{ in } \quad \R_t\times \R_x,  \quad m=2,3 \hbox{ or } 4.
\ee
Here $u=u(t,x)$ is a real-valued function, $\ve>0$ is a small number, $\la\geq 0$ a fixed parameter, and the \emph{potential} $a(\cdot )$ a smooth, positive function satisfying some specific properties, see (\ref{ahyp}) below. 
 
\smallskip
 
This equation represents, in some sense, a simplified model of \emph{long dispersive waves in a channel with variable bottom}, which considers \emph{large} variations in the shape of the solitary wave. The primary physical model, and the dynamics of a generalized soliton-like solution, was formally described by Karpman-Maslov, Kaup-Newell  and Ko-Kuehl \cite{KM1,KN1,KK}, with further results by Grimshaw \cite{Gr1}, and Lochak \cite{Lo}. See \cite{Mu2, New} and references therein for a detailed physical introduction to this model. The main novelty in the works above cited was the discovery of a \emph{dispersive tail} behind the soliton, with small height but large width, as a consequence of the lack of conserved quantities such as mass or energy. However, \emph{no mathematical proof of this phenomenon has been given}. 

\smallskip

On the other hand, from the mathematical point of view, equation (\ref{aKdV0}) is a variable coefficients version of the standard gKdV equation
\be\label{gKdV}
u_t + (u_{xx}- \la u +u^m)_x =0, \; \hbox{ in } \; \R_t\times \R_x;\quad m\geq 2 \hbox{ integer}, \quad \la\geq0.
\ee
This last equation is famous due to the existence of localized, exponentially decaying, smooth solutions called \emph{solitons}. Given real numbers $x_0$ and $c>0$, solitons are solutions of (\ref{gKdV}) of the form
\be\label{(3)}
u(t,x):= Q_c(x-x_0-(c-\la)t), \quad  \hbox{ with } \quad Q_c(s):=c^{\frac 1{m-1}} Q(c^{1/2} s),
\ee  
and where $Q=Q_1$ is the unique --up to translations-- function satisfying the following, second order, nonlinear ordinary differential equation
$$Q'' -Q + Q^m =0, \quad Q>0, \quad Q\in H^1(\R).
$$
In this case, this solution belongs to the Schwartz class and it is explicit \cite{Mu3}. For $m=2,3,4$ solitons and the sum of solitons have been showed stable and asymptotically stable \cite{Benj,BSS,MMT,PW,We1}. %ly given by the formula
%$$
%Q(x) = \Big[ \frac{m+1}{2\cosh^2(\frac {(m-1)}2 x)}\Big]^{\frac 1{m-1}}.
%$$
In particular, if $c>\la$ the solution (\ref{(3)}) represents a \emph{solitary wave},\footnote{In this paper we will not do any distinction between soliton and solitary wave, unlike in the mathematical-physics literature.} of scaling $c$ and velocity $(c-\la)$, defined for all time, moving to the right without \emph{any change} in shape, velocity, etc. In other words, a soliton represents a \emph{pure}, traveling wave solution with \emph{invariant profile}. In addition, this equation has solitons with negative velocity, moving to the left, provided $c<\la$. Finally, for the case $c=\la$, one has a stationary soliton solution, $Q_\la(x-x_0)$. These last solutions do not exist in the standard, inviscid model of gKdV (namely when $\la=0$).  

\smallskip

Coming back to (\ref{aKdV0}), the corresponding Cauchy problem in $H^1(\R)$ has been considered in \cite{Mu2}. The proof of this result is an adaptation of the fundamental work of Kenig, Ponce and Vega \cite{KPV}, with the introduction of some new monotone quantities, in order to replace the lost conserved ones.

\smallskip

A fundamental question related to (\ref{gKdV}) is how to generalize a soliton-like solution to more complicated models. In \cite{BL}, the existence of soliton solutions for generalized KdV equations with suitable autonomous nonlinearities has been considered. However, less is known in the case of an inhomogeneous nonlinearity, such as equation (\ref{aKdV0}). In a general situation, no elliptic, time-independent ODE can be associated to the soliton, unlike the standard autonomous case studied in \cite{BL}. Therefore, other methods are needed.

\smallskip

The first mathematically rigorous results in the case of time and space dependent KdV and mKdV equations ($m=2$ and $m=3$ above) were proved by Dejak-Sigal and Dejak-Jonsson \cite{SJ,DS}. They studied the dynamics of a soliton for times of $O(\ve^{-1})$, and deduced dynamical laws which characterize the whole soliton dynamics up to some order of accuracy. More recently, Holmer  \cite{H} has improved the Dejak-Sigal results in the KdV case, up to the Ehrenfest time $O(|\log \ve |\ve^{-1})$, provided the dynamical laws are well defined. In their model, the perturbation is of linear type, which do not allow large variations of the soliton shape, different to the scaling itself. 

\smallskip

In \cite{Mu2,Mu3} it was described the soliton dynamics, for all time, in the case of the time independent, perturbed gKdV equation (\ref{aKdV0}). The main novelty was the understanding of the dynamics as a nonlinear interaction between the soliton and the potential, in the spirit of the recent works of Holmer-Zworski \cite{HZ}, and Martel-Merle \cite{MMcol1,MMcol2}. In order to state this last result, and our present main results, let us first describe the framework that we have considered for the potential $a(\cdot)$ in (\ref{aKdV0}). 

\subsection*{Setting and hypotheses on $a(\cdot)$} Concerning the function $a$ in (\ref{aKdV0}), we assume that $a\in C^3(\R)$ and there exist fixed constants $K, \ga>0$ such that
\be\label{ahyp} 
\begin{cases}
1< a(r) < 2, \quad a'(r)>0, \quad \hbox{ for all } r\in \R, \\
0<a(r) -1 \leq  Ke^{\ga r}, \; \hbox{ for all } r\leq 0,  \qquad 0<2-a(r)\leq K e^{-\ga r} \; \hbox{ for all } r\geq 0,\, \hbox{ and}\\
 | a^{(k)}(r)| \leq K e^{-\ga|r|},  \quad \hbox{ for all } r\in \R,  \; k=1,2,3.
\end{cases}
\ee
In particular, $\lim_{r\to -\infty}a(r) = 1$ and $\lim_{r\to +\infty} a(r) = 2$. The chosen limits (1 and 2) do not imply a loss of generality, it just simplifies the computations. In addition, we assume the following hypothesis: there exists $K>0$ such that for $m=2,3$ and $4$,
\be\label{3d1d}
| (a^{1/m})^{(3)}(s) | \leq K (a^{1/m})'(s), \quad \hbox{ for all } \quad s\in \R.
\ee
This condition is generally satisfied, however $a'(\cdot)$ cannot be a compactly supported function. %In addition, note that (\ref{aKdV0}) formally behaves as a gKdV equation (\ref{gKdV}), with constant coefficients $1$ and $2$, as $x\to \pm \infty$.

\smallskip

We remark some important facts about (\ref{aKdV0}) (see \cite{Mu2,Mu3} for more details). Firstly, this equation is not  invariant under scaling and spatial translations. Second, a nonzero solution of (\ref{aKdV0}) \emph{might lose or gain some mass}, depending on the sign of $u$, in the sense that, at least formally, the Mass
\be\label{Ma}
M[u](t):= \frac 12\int_\R u^2(t,x)\,dx    \quad \hbox{ satisfies} \quad  \partial_t M[u](t) = -\frac{\ve}{m+1} \int_\R a'(\ve x) u^{m+1}(t,x)dx.
\ee
On the other hand, the energy
\be\label{Ea}
E_a [u](t) :=  \frac 12 \int_\R u_x^2(t,x)\,dx + \frac \la 2 \int_\R u^2(t,x)\, dx - \frac 1{m+1}\int_\R a(\ve x)  u^{m+1}(t,x)\,dx
%& & \quad =  \frac 12 \int_\R (u_0)_x^2(x)\,dx - \frac 1{m+1}\int_\R a_\ve(x)u_0^{m+1}(x)\,dx =  E[u](0), 
\ee
remains formally constant for all time. Let us recall that this quantity is conserved for local $H^1$-solutions of (\ref{gKdV}).  
%In addition, there exists another conservation law, valid for solutions with enough decay at infinity:
%\be\label{L1}
% \int_\R u(t,x)dx = \hbox{constant}.
%\ee
Since $a\sim 1$ as $x\to -\infty$, one should be able to construct a generalized soliton-like solution $u(t)$, satisfying $u(t) \sim Q(\cdot -(1-\la)t)$ as $t\to -\infty$.\footnote{Note that, with no loss of generality, we have chosen the scaling parameter equals one.} Indeed, this {\bf scattering} result has been proved in \cite{Mu2}, but for the sake of completeness, it is briefly described in the following paragraph.

\subsection*{Description of the dynamics} Let us recall the setting of our problem. Let $0<\la< 1$ be a fixed parameter, consider the equation
\be\label{aKdV}
\begin{cases}
u_t + (u_{xx} -\la u + a (\ve x) u^m)_x =0 \quad \hbox{ in \ } \R_t \times \R_x,   \\
m=2,3 \hbox{ and } 4;\quad  0< \ve\leq\ve_0;  \quad a(\ve \cdot) \hbox{ satisfying } (\ref{ahyp}) \hbox{-}(\ref{3d1d}). 
\end{cases}
\ee
Here $\ve_0>0$ is  a small parameter. Let $\la_0$ be the following parameter:
\be\label{l0}
\la_0:= \frac{5-m}{m+3} \in(0,1).
\ee
Assuming the validity of (\ref{aKdV}), one has the following generalization of \cite{Martel}:

\begin{thm}[Existence of solitons for gKdV under variable medium, \cite{Mu2}]\label{MT} Suppose $m=2,3$ and $4$. Let $0\leq \la <1$ be a fixed number.
There exists a small constant $\ve_0>0$ such that for all $0<\ve<\ve_0$ the following holds. There exists a solution $u\in C(\R, H^1(\R))$ of (\ref{aKdV0}), global in time, such that 
\be\label{Minfty}
\lim_{t\to -\infty} \|u(t) - Q(\cdot -(1-\la)t) \|_{H^1(\R)} =0.
\ee
%with conserved energy $E_a[u](t) = (\la-\la_0)M[Q].$ %This solution is \emph{unique} in the following cases: $(i)$ $m=3$; and $(ii)$ $m=2,4$, provided $\la>0$.
\end{thm}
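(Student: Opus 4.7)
The plan is a standard backward-in-time compactness construction, in the spirit of Martel's construction of multi-solitons, but adapted to the slowly varying coefficient $a(\ve x)$. Fix a large time $T_0 \gg 1/\ve$. For a sequence $T_n \to +\infty$, let $u_n\in C(\R,H^1(\R))$ be the global $H^1$-solution of (\ref{aKdV0}) with prescribed final data
\[
u_n(-T_n) \;=\; Q\bigl(\cdot - (1-\la)(-T_n)\bigr) \;=\; Q\bigl(\cdot + (1-\la)T_n\bigr)
\]
(global well-posedness follows from the Cauchy theory for (\ref{aKdV0}) in \cite{Mu2}). The goal is to establish a uniform backward estimate
\be\label{plan:unifbd}
\sup_{t\in[-T_n,\,-T_0]} \norm{u_n(t) - Q(\cdot -(1-\la)t)}_{H^1(\R)} \;\lesssim\; e^{-\theta\,\ve\,T_0},
\ee
for some $\theta>0$ independent of $n$, so that by weak-$H^1$ compactness one can extract $u_n(-T_0)\rightharpoonup u_0$; the solution $u$ of (\ref{aKdV0}) emanating from $u_0$ at $t=-T_0$ will inherit (\ref{plan:unifbd}) by standard continuity arguments (together with the $H^1$-continuity of the flow), and sending $T_0\to +\infty$ will yield (\ref{Minfty}).

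To obtain (\ref{plan:unifbd}), first perform a modulation decomposition on any interval of stability: write $u_n(t,x) = Q_{c_n(t)}(x-\rho_n(t)) + w_n(t,x)$ with orthogonality conditions that fix $(c_n,\rho_n)$ uniquely and make $w_n$ orthogonal to the two-dimensional kernel of the linearized operator around $Q_{c_n}$. Standard modulation estimates give $|c_n'(t)| + |\rho_n'(t)-(c_n(t)-\la)| \lesssim \norm{w_n(t)}_{H^1} + \ve|a'(\ve \rho_n(t))|$. Because $\rho_n(t)\sim (1-\la)t$ is very negative on $[-T_n,-T_0]$, hypothesis (\ref{ahyp}) gives $|a'(\ve\rho_n(t))|\lesssim e^{-\ga\ve(1-\la)|t|}$, so the potential's influence on the dynamical parameters is exponentially small as long as the decomposition persists.

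The core step is the energy/coercivity argument. Use a weighted functional combining the mass-type quantity $\int u_n^2 \psi(x-\rho_n(t))\,dx$ and the energy-type quantity $\int[u_{n,x}^2+\la u_n^2-\frac{2}{m+1}a(\ve x)u_n^{m+1}]\psi\,dx$, with $\psi$ a smooth monotone cutoff localized to the left of the soliton. Differentiating in time and using (\ref{Ma})--(\ref{Ea}) together with the localization of $a'(\ve x)$ on the support of $u_n$, one obtains an almost-monotonicity identity whose error term is $O(e^{-\theta \ve |t|})$. Combined with the coercivity of the linearized Hamiltonian on the orthogonal subspace (Weinstein's inequality, available here since $m=2,3,4$ is $L^2$-subcritical) one derives a Gronwall-type bound $\norm{w_n(t)}_{H^1}^2 \lesssim e^{-\theta\ve T_0}$, uniformly in $n$, together with $|c_n(t)-1|+|\rho_n(t)-(1-\la)t|\lesssim e^{-\theta\ve T_0}$. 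Integrated over $[-T_n,-T_0]$, this yields (\ref{plan:unifbd}).

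The main obstacle is precisely this uniform coercivity-plus-monotonicity estimate: unlike the constant coefficient case, mass is not conserved (see (\ref{Ma})), so one must identify a suitable \emph{almost} conserved quantity whose defect is controlled by $\ve a'(\ve\cdot)$ evaluated on the support of the soliton; the exponential-in-$\ve|t|$ decay of this defect, guaranteed by (\ref{ahyp}), is what closes the bootstrap. Once this is achieved, the rest is routine: passing to the weak limit produces $u$, bound (\ref{plan:unifbd}) upgrades to strong $H^1$-convergence along subsequences by energy conservation of the limit together with uniqueness of the Cauchy problem, and the arbitrariness of $T_0$ gives (\ref{Minfty}).
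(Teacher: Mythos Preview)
Your proposal is correct and follows essentially the same approach as in \cite{Mu2}: the Martel-type backward compactness construction, with uniform $H^1$ control on $[-T_n,-T_0]$ obtained from modulation plus a coercive energy/almost-conserved-mass argument, the defect being exponentially small in $\ve|t|$ because $a'(\ve\rho_n(t))$ decays via (\ref{ahyp}). The one technical difference worth noting is that in \cite{Mu2} the replacement for the lost mass conservation is not a localized cutoff quantity as you propose, but rather a \emph{global} modified mass with weight depending on $a$ itself, namely $\mathcal M[u](t)=\frac12\int_\R a(\ve x)^{-1}u^2\,dx$ (or $\hat M[u](t)=\frac12\int_\R a^{1/m}(\ve x)u^2\,dx$, cf.\ the remark after Proposition \ref{Existv}); these enjoy genuine monotonicity rather than almost-monotonicity, which slightly streamlines the bootstrap, though your localized version would also close.
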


Let us remark that (\ref{Minfty}) can be improved in the following way: there exists $K,\ga>0$ such that
\be\label{expode}
\|u(t)-  Q(\cdot -(1-\la)t) \|_{H^1(\R)} \leq Ke^{\ga\ve t}, \quad \hbox{for all $t\lesssim \ve^{-1-1/100}$\; (cf. \cite{Mu2}).}
\ee

\smallskip

Next, we have described the dynamics of interaction soliton-potential. Let $\la\in (0,1)$, and let $\tilde \la  =\tilde \la(m)$ be the unique solution of the algebraic equation \cite{Mu3}
\be\label{tlan}
\tilde \la (\frac {1-\la_0}{\tilde \la -\la_0})^{1-\la_0} =2^{\frac 4{m+3}}, \quad \la_0<\tilde \la<1,
\ee
with $\la_0$ given by (\ref{l0}). Let  $\kappa(\la)$ be the parameter defined by
\be\label{kmla}
\kappa(\la) := 2^{-1/(m-1)}, \quad 0<\la<\tilde \la; \qquad \kappa(\la) =1, \quad \tilde \la<\la<1. 
\ee
The above numbers represent a sort of \emph{equilibria} between the energy of the solitary wave and the strength of the potential. Indeed, let $c_\infty =c_\infty(\la)$ be the unique solution of the algebraic equations \cite{Mu2,Mu3}
\be\label{cinfla}
c_\infty^{\la_0}\Big(\frac{\la -c_\infty \la_0}{\la-\la_0}\Big)^{1-\la_0} = \begin{cases} 2^{4/(m+3)},& c_\infty >\la, \quad 0<\la<\tilde \la, \\
1,& c_\infty  <\la, \quad \tilde \la<\la<1,\end{cases}
\ee
and $c_\infty(\la_0) =1$, respectively.  We claim that this number represents the \emph{final scaling} of the soliton. Indeed, one has  $c_\infty(\la)\geq 1$ if $0<\la\leq \la_0$, $\la <c_\infty(\la) <1$  if $\la_0<\la<\tilde \la$, $0<c_\infty(\la)<\la$ if $\tilde \la<\la<1$, and the following

\begin{thm}[Interaction soliton-potential: refraction and reflection, \cite{Mu2,Mu3}]\label{MTL1} Suppose $0< \la <1$, with $\la\neq \tilde \la$. There exists $K,\ve_0>0$ such that for all $0<\ve<\ve_0$ the following holds. There exists constants $ \tilde T, c^+>0$, and a smooth $C^1$ function $ \rho(t)=\rho_\la(t) \in \R $ such that the function $w^+(t) := u(t) - \kappa(\la) Q_{c^+} (\cdot-  \rho(t))$ satisfies, for all $t\gg \ve^{-1}$,
\be\label{St1l}
\| w^+(t) \|_{H^1(\R)} + | \rho'(t) - c_\infty(\la) +\la | +|c^+-c_\infty| \leq K\ve^{1/2}.
\ee
\end{thm}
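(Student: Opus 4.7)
The plan is to track the soliton-like solution $u(t)$ from Theorem~\ref{MT} across the transition region of $a(\ve\cdot)$ by combining modulation theory with almost-conservation laws, then propagate the resulting smallness to all $t\gg\ve^{-1}$. Starting from a time $t_0 = -O(\ve^{-1-1/100})$ at which (\ref{expode}) gives exponential closeness of $u(t_0)$ to $Q(\cdot-(1-\la)t_0)$, I would apply a Martel-Merle-type decomposition
\begin{equation*}
u(t,x) = a(\ve\rho(t))^{-1/(m-1)}\,Q_{c(t)}(x - \rho(t)) + z(t,x),
\end{equation*}
where the prefactor $a(\ve\rho(t))^{-1/(m-1)}$ adapts the soliton profile to the constant-coefficient equation with frozen value $a=a(\ve\rho(t))$, and two orthogonality conditions on $z$ against the translation and scaling eigenfunctions of the linearized operator fix $(c(t),\rho(t))$ as long as $\|z(t)\|_{H^1}$ stays small. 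Inserting this ansatz into (\ref{aKdV}) yields modulation equations
\begin{equation*}
c'(t) = \ve\,F_c(\ve\rho(t), c(t)) + O(\ve^2 + \|z\|_{H^1}^2), \qquad \rho'(t) = c(t) - \la + O(\ve + \|z\|_{H^1}),
\end{equation*}
with the forcing $F_c$ arising from projecting the inhomogeneous nonlinearity onto the modulation direction $\partial_c Q_{c(t)}$.

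To identify the asymptotic scaling $c_\infty$ and amplitude factor $\kappa(\la)$, I would combine the exact energy conservation $\partial_t E_a[u] = 0$ with the mass evolution (\ref{Ma}), evaluated to leading order on the soliton ansatz. Using the scaling laws $M[Q_c]\propto c^{\la_0\theta_m}$ and $E[Q_c]\propto c^{\theta_m}$ with $\theta_m = (m+3)/(2(m-1))$, and integrating the mass law along $\rho(t)$, produces an adiabatic invariant whose matching between the incoming data ($a=1$, $c=1$) and the outgoing asymptotic state yields precisely the algebraic relation (\ref{cinfla}). The threshold $\tilde\la$ of (\ref{tlan}) is then the value of $\la$ at which $c(t)$ reaches the critical level $c_\ast = \la/\la_0$ (where $\rho'$ vanishes along the adiabatic trajectory) exactly at the exit $a = 2$: for $\la < \tilde\la$ one has $\rho'(t) > 0$ throughout, the soliton emerges in the region $a\approx 2$, and the prefactor becomes $\kappa(\la) = 2^{-1/(m-1)}$ with $c_\infty > \la$ (refraction); for $\la > \tilde\la$, $\rho'(t)$ turns negative, the soliton is reflected back into the region $a\approx 1$, and the prefactor becomes $\kappa(\la) = 1$ with $c_\infty < \la$.

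To propagate smallness of $z(t)$ from the interaction window to all $t\gg\ve^{-1}$, I would follow the Martel-Merle-Tsai monotonicity strategy: localized energy-type functionals of the form
\begin{equation*}
\mathcal{F}_\pm(t) = \int \bigl[z_x^2 + (\la + c(t))z^2 - V_{\mathrm{lin}}(t,x)\,z^2\bigr]\,\psi_\pm(\ve(x - \rho(t)))\,dx,
\end{equation*}
with $V_{\mathrm{lin}}$ the linearization potential around the modulated soliton and $\psi_\pm$ smooth cutoffs isolating the right/left tails, are almost non-increasing up to $O(\ve)$ corrections; Weinstein coercivity under the orthogonality conditions then upgrades this into $\|z(t)\|_{H^1}^2\lesssim\ve$ uniformly in time. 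Reading off the asymptotic modulation parameters produces $|c^+ - c_\infty|\lesssim\ve^{1/2}$ and $|\rho'(t) - (c_\infty - \la)|\lesssim\ve^{1/2}$, and combining with the prefactor limit $a(\pm\infty)^{-1/(m-1)} = \kappa(\la)$ yields the bound (\ref{St1l}).

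The main obstacle is the interaction window $|\ve\rho(t)|\lesssim 1$, of length $O(\ve^{-1})$, on which the forcing $\ve a'(\ve\rho(t))$ accumulates to an $O(1)$ total change of $c(t)$: no small parameter multiplies the leading drift, so the $\ve^{1/2}$ error must be extracted from the delicate cancellation between energy conservation and the leading-order mass law, with hypothesis (\ref{3d1d}) controlling the higher-derivative contributions of $a$ to the error. The reflection case is particularly subtle near the turning point $\rho'(t)\to 0$, where Kato-type cutoffs transported with the soliton lose their dissipative monotonicity; one must either anchor the cutoffs at the rightmost excursion of $\rho(t)$ or exploit a local time-reversal symmetry near the turning time to recover the required monotonicity and close the argument.
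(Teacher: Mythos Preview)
The theorem is not proved in this paper; it is cited from \cite{Mu2,Mu3}, and Section~\ref{2} only recalls the machinery from those references: the dynamical system (\ref{c}) for the leading-order parameters $(C,P)$, the approximate solution $\tilde u$ of Proposition~\ref{prop:decomp}, the decomposition of Proposition~\ref{prop:I}, and the Virial estimate of Lemma~\ref{VL}. Your high-level strategy---modulation around the rescaled profile $a(\ve\rho)^{-1/(m-1)}Q_c(\cdot-\rho)$, identification of $c_\infty$ via the adiabatic invariant coming from energy conservation and the mass law, and monotonicity plus Weinstein coercivity to control the remainder---matches that framework.

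There is, however, a genuine gap in the error budget. Your ansatz is only the modulated soliton $R$, and you assert that the localized functionals $\mathcal F_\pm$ are ``almost non-increasing up to $O(\ve)$ corrections'' and that this yields $\|z\|_{H^1}^2\lesssim\ve$. It does not: inserting $R$ alone into (\ref{aKdV}) leaves a residual $S[R]$ of size $O(\ve\, e^{-\ga\ve|\rho(t)|})$ in $H^1$ (from the Taylor expansion of $a(\ve x)-a(\ve\rho)$ against $Q_c^m$), and over the interaction window of length $O(\ve^{-1})$ this forces $z$ to size $O(1)$, not $O(\ve^{1/2})$. The missing step is exactly Proposition~\ref{prop:decomp}: one adds a first-order correction $\ve\, d(t)A_c(y)$ (with $A_c$ bounded but not in $L^2$, truncated by $\eta_\ve$) to the approximate solution, which reduces the residual to $\|\tilde S[\tilde u]\|_{H^1}\leq K\ve^{3/2}e^{-\ga\ve|\rho|}$ as in (\ref{SH2}). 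Only with this improved approximation does integration over $[-T_\ve,\tilde T_\ve]$ produce the $\ve^{1/2}$ bound in (\ref{St1l}); the Virial estimate (\ref{dereta}), with its $O(\ve^{5/2})$ error, is likewise built on top of this corrected ansatz.

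A smaller slip: the turning point for $\rho'$ is at $c=\la$ (since $\rho'\sim c-\la$), not at $c=\la/\la_0$; the latter is where $c'$ vanishes along the adiabatic flow, cf.\ (\ref{f1}) and Lemma~\ref{ODE}. The threshold $\tilde\la$ in (\ref{tlan}) corresponds to the borderline $c_\infty=\la$, separating $\rho'(t)\to$ positive (refraction) from $\rho'(t)\to$ negative (reflection) as $t\to+\infty$.
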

\begin{rem}[The limiting case $\la=\tilde \la$]
The behavior of the solution in the case $\la =\tilde \la $ remains an interesting open problem. 
\end{rem}

\begin{rem}
In addition to (\ref{St1l}), it is proved in \cite{Mu2,Mu3} an asymptotic stability property, in the spirit of Martel and Merle \cite{MMnon}. This result gives the existence of the limiting parameter $c^+$ above mentioned. We believe that  the above is the first mathematical proof of the existence of a global, \emph{reflected} soliton-like solution in a variable coefficients gKdV model.
\end{rem}

Finally, by means of a contradiction argument, no pure soliton solutions are present in this  regime.

\begin{thm}[Non existence of pure-soliton solutions for (\ref{aKdV}), \cite{Mu2,Mu3}]\label{MTL2} Let $0<\la<1$, with $\la\neq \tilde \la$. Then 
$$
\limsup_{t\to +\infty} \|w^+(t)\|_{H^1(\R)}>0.
$$
\end{thm}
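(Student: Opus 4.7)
The plan is to argue by contradiction: suppose
\[
\lim_{t\to+\infty}\|w^+(t)\|_{H^1(\R)} = 0,
\]
so that $u(t)$ converges strongly in $H^1$ to a pure soliton $\kappa(\la)Q_{c^+}(\cdot-\rho(t))$ as $t\to+\infty$. Under this assumption both asymptotic values of the exactly conserved energy $E_a$ and of the explicitly varying mass $M$ are prescribed by pure-soliton formulas at $t=\pm\infty$, and the contradiction will come from balancing them against each other.

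First, I evaluate $E_a[u](\pm\infty)$ in closed form using $u\to Q_1(\cdot-(1-\la)t)$ with $a(\ve x)\to 1$ at $t\to-\infty$, and $u\to\kappa(\la)Q_{c^+}(\cdot-\rho(t))$ with $a(\ve x)\to 2$ in the refraction regime $0<\la<\tilde\la$ or $a(\ve x)\to 1$ in the reflection regime $\tilde\la<\la<1$. Using the Pohozaev identities $\|Q'\|_{L^2}^2=\tfrac{m-1}{2(m+1)}\|Q\|_{L^{m+1}}^{m+1}$ and $\|Q\|_{L^2}^2=\tfrac{m+3}{2(m+1)}\|Q\|_{L^{m+1}}^{m+1}$, the equation $E_a[u](-\infty)=E_a[u](+\infty)$ reduces precisely to the implicit definition \eqref{cinfla} of $c_\infty$, hence $c^+=c_\infty(\la)$. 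Energy conservation is therefore self-consistent with the pure-soliton ansatz and does not on its own yield a contradiction.

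The contradiction must then come from the mass equation \eqref{Ma}. Integrated in $t$ under the pure-soliton assumption, it reads
\[
\frac{\|Q\|_{L^2}^2}{2}\big(\kappa(\la)^2\, c_\infty^{(5-m)/(2(m-1))} - 1\big) = -\frac{\ve}{m+1}\int_{-\infty}^{+\infty}\!\!\int_\R a'(\ve x)\,u^{m+1}(t,x)\,dx\,dt.
\]
I plan to evaluate the RHS via the modulation decomposition $u(t,x)=\tilde\kappa(t)Q_{c(t)}(x-\rho(t))+z(t,x)$ from \cite{Mu2,Mu3}, together with the modulation ODE $\dot\rho=c(t)-\la+O(\ve)$, meaningful precisely because $\la\neq\tilde\la$ keeps $c(t)-\la$ bounded away from zero. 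The change of variables $\tau=\ve\rho(t)$ turns the time integral into a spatial integral over $a$; its leading-order value matches the LHS via the adiabatic slaving of $\tilde\kappa$ and $c$ to $a(\ve\rho)$ imposed by energy conservation, so the identity holds only if the residual contributions from the radiation $z(t)$ vanish identically. A rigidity step in the spirit of Martel--Merle \cite{MMnon} then shows that $z\equiv 0$ would force $u$ to be an exact translation-invariant soliton-type solution of \eqref{aKdV}, which is impossible since $a'(\ve\cdot)\not\equiv 0$ destroys translation invariance. Contradiction.

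The main obstacle is precisely this rigidity step: one needs a weighted-$L^2$ monotonicity formula strong enough to propagate the vanishing $\|w^+(t)\|_{H^1}\to 0$ at $t=+\infty$ back into the interaction region $[0,O(\ve^{-1})]$, uniformly in $\ve$. Without such a propagation, the leading-order mass balance remains self-consistent with the pure-soliton hypothesis and no contradiction is obtained.
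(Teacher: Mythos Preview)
Your proposal identifies a genuine difficulty but does not resolve it. You correctly observe that energy conservation alone is consistent with the pure-soliton hypothesis (it fixes $c^+=c_\infty$), and you then correctly note that the leading-order mass balance is \emph{also} consistent: the adiabatic law for $(c,\rho)$ is precisely designed so that mass evolves correctly at order $O(1)$. Your contradiction therefore has to come from subleading terms in the mass integral, and you admit at the end that you lack the propagation/rigidity estimate needed to control these. This is not a technicality that can be patched: the residual $z(t)$ is only known to be $O(\ve^{1/2})$ in $H^1$ during the interaction (Proposition~\ref{prop:I}), and integrating an $O(\ve)\cdot O(\ve^{1/2})$ cross term over a time interval of length $O(\ve^{-1})$ yields $O(\ve^{1/2})$, which swamps any putative discrepancy. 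Without an a priori improvement of $\|z(t)\|$ coming from the hypothesis $w^+(t)\to 0$, the mass-balance route simply does not close; and the Martel--Merle rigidity you invoke applies to the constant-coefficient equation, not to \eqref{aKdV}.

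The paper does not reprove Theorem~\ref{MTL2} directly (it is quoted from \cite{Mu2,Mu3}); instead it establishes the stronger quantitative bound of Theorem~\ref{MTL}, which of course implies it. The mechanism is entirely different from yours. One introduces a second solution $v(t)$ which is \emph{pure at $+\infty$} (Proposition~\ref{Existv}), builds its own approximate solution $\tilde v$ with dynamical corrections $\tilde f_2,\tilde f_3$, and observes that these have the \emph{opposite sign} to the corresponding $f_2,f_3$ for $u$ (Proposition~\ref{prop:decomp1}, see \eqref{tf2}). If $u$ were nearly pure at $+\infty$, a backward stability argument (Lemma~\ref{BaSta}) forces $u$ and $v$ to remain close throughout the interaction region, hence $(c,\rho)\approx(\tilde c,\tilde\rho)$ (Lemma~\ref{6p2}). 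But the sign discrepancy in $f_2$ (or $f_3$ when $m=3$) produces a definite, $\ve$-independent separation between the trajectories, extracted via the functional $J(t)$ of \eqref{PaPa}--\eqref{dJ} for $m=2,4$, or via the scaling identity \eqref{c1tc1} for $m=3$. That is the contradiction. The missing ingredient in your approach is thus not a rigidity theorem but this second-order \emph{asymmetry} in the dynamical laws between forward-pure and backward-pure solutions, which is invisible at the level of global mass/energy bookkeeping.
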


\subsection*{Main result} A natural question left open in \cite{Mu2,Mu3} is to establish a quantitative lower bound on the defect $w^+(t)$ as the time goes to infinity, at least in the case $0<\la<1$, $\la\neq \tilde \la$ (the cases $\la=0$ and $\la =\tilde \la$ seem harder). In this paper we improve Theorem \ref{MTL2} by showing a first lower bound on the defect $w^+(t)$ at infinity. In other words, any perturbation of the constant coefficients gKdV equation of the form (\ref{aKdV}) induces non trivial dispersive effects on the soliton solution --they are not pure anymore. This result clarifies the existence of a nontrivial {\bf dispersive tail} and the {\bf inelastic character} of generalized solitons for perturbations of some dispersive equations, and moreover, it seems to be the general behavior. In addition, one can see this result as a generalization to the case of interaction soliton-potential of the recent ones proved by Martel and Merle, concerning the inelastic character of the collision of two solitons for non-integrable gKdV equations \cite{MMcol1,MMfin2}.

\smallskip

However, in order to obtain such a quantitative bound, and compared with the proof in \cite{MMfin2}, the present analysis requires several new ideas, in particular for the more difficult case, the cubic one. As we will describe below, our lower bounds are related to first and second order corrections to the dynamical parameters of the soliton solution. The main result of this paper is the following

\begin{thm}[Inelastic character of the soliton-potential interaction]\label{MTL} Let $m=2,3$ and $4$, $0<\la<1$, $\la\neq \tilde \la$, and $\delta := \frac 1{50}$. There exist constants $K,\ve_0>0$ such that, for all $0<\ve<\ve_0$, the following holds. Let $w^+(t)$ be as in Theorem \ref{MTL1}. Then 
\be\label{LBound}
\liminf_{t\to +\infty} \|w^+(t)\|_{H^1(\R)} \geq \frac 1K \ve^{1 +\delta},
\ee
\end{thm}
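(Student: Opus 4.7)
The plan is to argue by contradiction. Suppose there is a sequence $t_n \to +\infty$ along which $\|w^+(t_n)\|_{H^1(\R)} \leq K^{-1}\ve^{1+\delta}$, with $K$ large to be fixed. First, perform a refined modulation of $u$: write $u(t,x) = \kappa(\la) Q_{c(t)}(x - \rho(t)) + z(t,x)$ with two natural orthogonality conditions on $z$ (against $\pd_x Q_{c(t)}(\cdot-\rho(t))$ and an appropriate scaling direction). The hypothesis translates into $\|z(t_n)\|_{H^1} \lesssim \ve^{1+\delta}$, and standard modulation arguments produce ODEs for $c(t)$ and $\rho(t)$, with $\rho'(t) = c(t) - \la + O(\ve)$ and a similar expansion for $c'(t)$.

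Next, construct a refined approximate soliton $\tilde U(t) = \kappa(\la) Q_{c(t)}(\cdot - \rho(t)) + \ve A_1 + \ve^2 A_2$, where the profiles $A_j$ solve linearized problems $\mathcal L A_j = F_j$ at $Q_{c(t)}$, with source terms dictated by the Taylor expansion of $a(\ve\,\cdot)$ around $\ve \rho(t)$. Fredholm solvability of these linear problems against the kernel of $\mathcal L$ (spanned by $\pd_x Q_c$ and $\pd_c Q_c$) refines the modulation equations up to $O(\ve^2)$ and provides explicit formulas for the second-order corrections to $c^+$ and to the overall $\kappa$-factor. Then use a Martel--Merle type monotonicity functional, combined with the almost-conservation of $E_a[u]$ and the controlled variation of $M[u]$ given by (\ref{Ma}), to propagate the smallness of $z$ backward in time from $t_n$ down to a time $T_\ve \sim \ve^{-1-\delta'}$, with $\delta' < \delta$. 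The gap between $\delta$ and $\delta'$ absorbs the cost of the monotonicity estimate, which generates errors proportional to $\ve\!\int z^2\, dt$.

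Finally, derive the contradiction by matching at $t = T_\ve$. The refined dynamics of step two predicts a specific non-vanishing inelastic correction, computable as a scalar product of the form $\int F\, \pd_c Q_{c_\infty}\, dx$ which captures the unavoidable dispersive radiation induced by the inhomogeneity $a(\ve\,\cdot)$. For $m=2$ and $m=4$ this integral is non-zero already at first order in $\ve$; in all three cases the net size of the predicted defect lies exactly at the scale $\ve^{1+\delta}$, contradicting the assumed bound on $\|z(T_\ve)\|_{H^1}$ obtained from backward propagation.

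The main obstacle is the cubic case $m=3$. The leading-order correction is entirely absorbed by the modulation of $c$ and $\rho$, so the obstruction to a pure-soliton dynamics first manifests at second order in $\ve$ through a cancellation inherited from the integrable mKdV structure. Isolating the $\ve^2$-scale contribution, verifying that the relevant second-order scalar product is non-zero, and simultaneously maintaining coercivity of $\mathcal L$ together with the stability of the decomposition, constitute the technical core of the argument and are the reasons that only an $\ve^{1+\delta}$ (rather than $\ve^2$) lower bound is obtained.
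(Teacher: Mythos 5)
Your proposal does not close, because it is missing the single structural idea on which the entire proof rests: the introduction of a \emph{second} solution $v(t)$ of (\ref{aKdV}) that is \emph{pure at $t\to +\infty$} (Proposition~\ref{Existv}), together with its own approximate description and, crucially, the observation that the correction profile $\tilde A_{\tilde c}$ associated with $v$ must have its non-vanishing tail on the \emph{opposite} side of the soliton compared with $A_c$, which forces the sign flips $\tilde f_2 = -f_2$ and $\tilde f_3 = -f_3$ in (\ref{tf2}). Your scheme only tracks $u(t)$: you propagate smallness of $z$ backward from $t_n$ to a time just after the interaction and then assert that the refined modulation ODEs produce a non-vanishing inelastic correction. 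But those ODEs describe \emph{any} solution that is soliton-like in the interaction region — they do not by themselves distinguish a pure-at-$-\infty$ solution from a pure-at-$+\infty$ one, so no contradiction is available at that stage. What the paper exploits is precisely the \emph{asymmetry} between $u$ and $v$: if $\|u-v\|_{H^1}$ were $\lesssim \nu\ve^{1+\delta}$, then backward stability (Lemma~\ref{BaSta}) and a modulation-comparison lemma (Lemma~\ref{6p2}) force $|c-\tilde c|\ll\ve$ and $|\rho-\tilde\rho|\ll 1$, while integrating the difference of the modulation laws over $[-T_\ve,\tilde T_\ve]$ (through the $J,\tilde J$ functionals for $m=2,4$, or directly through $c_1'-\tilde c_1'$ for $m=3$) yields a fixed drift $\sim \int \ve f_2\,dt$ or $\sim\int \ve^2 f_3\,dt$ of order $1$ (resp.\ $\ve$) because $f_2-\tilde f_2 = 2f_2$ does not integrate to zero. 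Without $v$ this doubling, and hence the contradiction, simply does not appear.

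Two smaller points. For $m=3$ the vanishing of the first-order translation correction is not a ``cancellation inherited from the integrable mKdV structure'': the equation (\ref{aKdV0}) is non-integrable for every $m$, and $f_2\equiv 0$ in the cubic case is the elementary algebraic fact that the factor $3-m$ in $\xi_m$ vanishes, see (\ref{f2}). The paper handles $m=3$ by comparing the scaling laws $c_1'$ and $\tilde c_1'$ directly and proving the sharpened Virial estimates of Lemmas~\ref{VL} and \ref{VLtil} to control the quadratic error terms; this is technically delicate but has nothing to do with integrability. Finally, your claim that the predicted defect sits ``exactly at the scale $\ve^{1+\delta}$'' overstates the conclusion: $\ve^{1+\delta}$ is what the contradiction argument delivers, and the paper itself (Remark after Theorem~\ref{MTL}) conjectures that the true size of the defect is closer to $\ve\,|\log\ve|^{-\delta}$, leaving a genuine gap against the upper bound $\ve^{1/2}$ from Theorem~\ref{MTL1}.
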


\begin{rem}[Meaning of $\delta$]
The number $\delta$ above is needed in our computations, but it is not essential. It is related to the definition of the time of interaction $T_\ve$ (\ref{Te}) and estimates (\ref{expode}) and (\ref{cTee}), but it can be replaced by any $\delta>0$ provided $\ve_0$ is chosen even smaller. Looking at our proofs, we believe that the best lower bound is given by $\sim\ve |\log\ve|^{-\delta}$, for some $\delta>0$; however, this problem will not be considered in this paper.
\end{rem}

\begin{rem}
Similar to the results obtained in \cite{MMcol1,MMfin2}, we have found a nontrivial gap between the two bounds (\ref{St1l}) and (\ref{LBound}). This gap is related to the emergence of infinite mass corrections to the constructed approximate solution \cite{Mu2,Mu3}, and it is not formally present in the NLS model \cite{Mu1}. The understanding of this gap is a very interesting open problem. Additionally, from the above results we do not discard the existence of small solitary waves (note that small solitons move to the left), at least for the case $m=2$. In the cubic and quartic cases, we believe there are no such soliton solutions. 
\end{rem}

\subsection*{Ideas of the proof}  As we have explained before, the above result is originally based in a recent argument introduced by Martel and Merle in \cite{MMfin2}, to deal with the interaction of two nearly-equal solitons for the quartic gKdV equation. Roughly speaking, in their paper the interaction was proved to be inelastic because of a small lack of symmetry on the soliton trajectories, contrary to the symmetric integrable case. In this paper, we improve the Martel-Merle idea in two directions: first, we generalize such an argument to the case of the interaction soliton-potential, which is nontrivial since our problem has no evident symmetries to be exploited; and second, we deal, in addition, with a somehow \emph{degenerate} case, the cubic one, where the original Martel-Merle argument is not longer available. Therefore, we introduce new ideas to recover the same bound as in the other cases.

\smallskip

Let us describe the proof. We consider an approximate solution of (\ref{aKdV}), describing the interaction soliton-potential. This problem was first considered in \cite{Mu2}, but in order to find an explicit expression for the defect of the solution, we improved such a construction in \cite{Mu3}. 

\smallskip

Let us be more precise. The objective of the new approximate solution is to obtain \emph{first and second order corrections} on the translation and scaling parameters $\rho(t), c(t)$ of the soliton solution. Indeed, in \cite{Mu3} was proved that the solution $u(t)$ behaves along the interaction, at first order, as follows:
$$
u(t,x) \sim a^{-1/(m-1)}(\ve \rho(t)) Q_{c(t)}(x-\rho(t)) + \hbox{ lower order terms in $\ve$,} 
$$
with $(c,\rho)$ satisfying the dynamical laws\footnote{We write $f_j=f_j(\ve t)$ in order to emphasize the fact that we are working with slowly varying functions, but in the rigorous proof below we only use the notation $f_j(t).$ }
\bea\label{rhoplus}
 c'(t) & \sim & \ve f_1(\ve t) + \ve^2 f_3(\ve t), \; \hbox{ with } \;  f_1(\ve t) , f_3(\ve t) \neq 0, \quad m=2,3,4; \\
 \rho'(t) & \sim & c(t) -\la + \ve f_2(\ve t) +\ve^2 f_4(\ve t) , \; \hbox{ with } \;  f_2(\ve t) \neq 0, \quad m=2,4,\label{cplus}
\eea
(see Proposition \ref{prop:decomp1} for an explicit description of this dynamical system). Moreover, one has $f_2 \equiv 0$ in the cubic case (cf. Proposition \ref{prop:decomp}). Roughly speaking, the parameter $f_2(\ve t)$ ($f_3(\ve t)$ resp.) satisfies 
$$
\int_\R  \ve f_2 (\ve t)dt <+\infty  \qquad ( \int_\R \ve f_3(\ve t)dt <+\infty \hbox{ resp.}).
$$ 
Therefore, after integration in a time interval of size $O(\ve^{-1})$, near $t\sim 0$, these new terms induce a correction of order $O(1)$ (of order $O(\ve)$ resp.) on the trajectory $\rho(t)$ (on the scaling $c(t)$, resp.). These corrections are precisely the quantities that induce lower bounds for the hidden defect.

\smallskip

The next step is to introduce a new function, say $v(t)$, which has the opposite behavior compared to $u(t)$. This solution is pure as $t\to +\infty$, and therefore, from Theorem \ref{MTL1}, different from $u(t)$. We can describe the dynamics associated to $v(t)$ for all time, in particular along the interaction region: we construct an approximate solution $\tilde v(t)$, with associated dynamical parameters $\tilde c(t)$ and $\tilde\rho(t)$, which satisfy suitable dynamical laws, as in (\ref{rhoplus})-(\ref{cplus}). However, since $v(t)$ is pure as $t\to +\infty$, the respective coefficients $\tilde f_3(t)$ and $\tilde f_2(t)$ are of {\bf different signs} with respect to  (\ref{rhoplus})-(\ref{cplus}). This crucial observation was first noticed by Martel and Merle in \cite{MMfin2} for the quartic gKdV model, and represents a lack of symmetry in the dynamics.

\smallskip

The purpose for the rest of proof is to profit of this fact. The idea is the following: if (\ref{LBound}) is not satisfied, then $u(t)$ and $v(t)$ are \emph{very close} for all time, at some order {\bf smaller} than $\ve$. This property is nothing but a backward stability result.\footnote{The existence of this property in the NLS case is an open problem, see \cite{Mu1}.} Now, suppose for instance that we are in the quadratic case. From the above stability result, one can prove that the dynamical parameters of $u(t)$ and $v(t)$  are very close, in the sense that
\be\label{lila}
|c(t) -\tilde c(t)| \ll \ve, \quad |\rho(t) -\tilde \rho(t)| \ll 1.
\ee
We give a more precise description of these properties in Lemmas \ref{BaSta} and \ref{6p2}, and (\ref{LaLb}). But from (\ref{rhoplus})-(\ref{cplus}) one has, after integration in time,
\be\label{lila2}
|\rho(t) -\tilde \rho(t)| \sim \int_\R f_2(s)ds .
\ee
Note that we have used that $f_2(\ve t)$ and $\tilde f_2(\ve t)$ have opposite signs. Then we have a contradiction with (\ref{lila}), provided the integral in (\ref{lila2}) is not zero, and the bounds in (\ref{lila}) are small enough.

\smallskip

The last step above can be performed in a more rigorous way with the following argument. In the case $m=2,4$ the idea is to find a quantity satisfying the following properties: $(i)$ its variation in time is of order $O(\ve)$, $(ii)$ it contains the dynamical laws (\ref{rhoplus})-(\ref{cplus}), and $(iii)$ now the term $\ve f_2(t)$ is relevant for the dynamics. This quantity is given by a suitable modification of a well-known functional $J(t)$ introduced by Martel and Merle in \cite{MMfin2}, whereas in the cubic case the defect is in some sense degenerate and therefore $J(t)$ is useless. However, since in this case the variation of $c(t)$ is of second order in $\ve$, we still recover the same lower bound, but we require several sharp estimates. We overcome this difficulty by using improved Virial estimates (cf. Lemmas \ref{VL}, \ref{VLtil}), with the right signs, which allow to close our arguments.  To obtain a suitable lower bound for the defect in the case $\la=0$ is probably a more challenging, open problem.

\begin{rem}[The Schr\"odinger case]
The interaction soliton-potential has be also considered in the case of the nonlinear Schr\"odinger equation with a slowly varying potential, or a soliton-defect interaction. See e.g. Gustafson et al. \cite{GFJS, FGJS}, Gang and Sigal \cite{GS}, Gang and Weinstein \cite{GW}, Holmer, Marzuola and Zworski \cite{HZ,HMZ0, HMZ}, Perelman \cite{P} and our recent work \cite{Mu1} on the NLS equation. It is relevant to say that the equivalent of Theorem \ref{MTL} for the Schr\"odinger case considered in \cite{Mu1} is an interesting open question.
\end{rem}

\smallskip

Let us explain the organization of this paper. First, in Section \ref{2} we introduce the basic tools for the study of the interaction problem. These results are reminiscent of our previous papers \cite{Mu2,Mu3}, and therefore are stated without proofs.  In Section \ref{4} we consider the case of a decreasing potential. We introduce the solution $v(t)$ which satisfies the opposite behavior with respect to $u(t)$. Section \ref{5} is devoted to the rigorous proof of (\ref{lila}), and in Section \ref{6} we prove (\ref{lila2}). In Section \ref{7} we prove the main result in the cases $m=2,4$, and finally in Section \ref{8} we consider the most difficult case, $m=3$.

\smallskip

\noindent
{\bf Notation.} We follow most of the notation introduced in \cite{Mu3}. In particular, in this paper both $K,\ga>0$ will denote fixed constants, independent of $\ve$, and possibly changing from one line to another. 
%Let us define, for $m=2,3$ and $4$,
%\be\label{Muu}
%\mu =\mu(\la) := \frac{99}{100}(1-\frac{\la_0}{\la})^{\frac{1-\la_0}{\la_0}} .
%\ee
%For any $\la_0<\la<1$, this number is always a positive quantity, less than 1
Additionally, we introduce, for $\ve>0$ small, the time of interaction
\be\label{Te}
 T_\ve := \frac {\ve^{-1 -\frac 1{100}}}{1-\la}>0.
\ee

\noindent
{\bf Acknowledgments}. I wish to thank Y. Martel and F. Merle for their continuous encouragement along the elaboration of this work. Part of this work has been written at the University of Bilbao, Spain. The author has been partially funded by grants Anillo ACT 125 CAPDE and Fondo Basal CMM. Some of these results have been announced in \cite{Mu5}.

\medskip

\section{Preliminaries}\label{2}

The purpose of this section is to recall several properties needed along this paper. For more details and the proofs of these results, see Section 2 and 3 in \cite{Mu2,Mu3}.

\subsection{Existence of approximate parameters} 

Denote, for $C>0$, $P\in \R$ given, and $m=2,3$ or $4$,
\be\label{f1}
f_1(C,P) := \frac 4{m+3} \ C(C-\frac \la{\la_0} ) \frac{a'(\ve P)}{a(\ve P)}.
\ee
We recall the existence of a unique solution for a dynamical system involving the evolution of the first order \emph{scaling} and \emph{translation} parameters of the soliton solution, $(C(t), P(t))$, in the interaction region. The behavior of this solution is essential to understand the dynamics of the soliton inside this region.%, see the next Subsection.

\begin{lem}[\cite{Mu2,Mu3}]\label{ODE} Let $m=2,3$ or $4$. Let $\la_0, a(\cdot)$ and $f_1$ be as in (\ref{l0}), (\ref{ahyp}) and (\ref{f1}). There exists $\ve_0>0$ small such that, for all $0<\ve<\ve_0$, the following holds.

\begin{enumerate}
\item \emph{Existence}. 
Consider $0\leq \la<1$. There exists a unique solution $(C(t),P(t))$, with $C(t)$ bounded, monotone and positive, defined for all $t\geq -T_\ve$, of the following nonlinear system  
\be\label{c}
\begin{cases}
C'(t) = \ve f_1(C(t), P(t)), & C(- T_\ve) = 1, \\
P'(t) = C(t) -\la, & P(-T_\ve) =-(1-\la)T_\ve.
\end{cases}
\ee
Moreover, $\lim_{t\to +\infty} C(t) >0$, for all $0\leq \la<1$, independently of $\ve$.

\item \emph{Asymptotic behavior}. Let $\la_0<\tilde \la<1$ be the unique number satisfying (\ref{tlan}).
%\be\label{tl0}
%\tilde \la (\frac{1-\la_0}{\tilde\la-\la_0})^{1-\la_0} =2^{4/(m+3)}.
%\ee
Then,
\begin{enumerate}
\item For all $0\leq \la  \leq  \tilde \la$, one has  $\lim_{t\to +\infty} C(t) >\la$ and $\lim_{t\to +\infty} P(t) = +\infty$.

\item For all $\tilde \la<\la<1$, there exists a unique $ t_0\in ( -T_\ve, +\infty)$ such that $C(t_0)=\la$, and  $\lim_{t\to +\infty} C(t) < \la $. Moreover, $\lim_{t\to +\infty} P(t) =-\infty.$ Finally, one has the bound  $-T_\ve <t_0 \leq K(\la) T_\ve ,$ for a positive constant $K(\la)$, independent of $\ve$.
\end{enumerate}
\end{enumerate}
\end{lem}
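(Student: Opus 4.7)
The system (\ref{c}) is of Cauchy--Lipschitz type on any region where $C$ stays bounded and positive, since $a\in C^3$, $a'/a$ is bounded, and $f_1$ is polynomial in its first argument. I therefore get a unique local solution starting from $(C,P)=(1,-(1-\la)T_\ve)$. The key observation is that, while $P'(t)=C(t)-\la$ has constant sign, I can parameterize the orbit by $P$ and obtain the separable equation
\begin{equation*}
\frac{C-\la}{C(C-\la/\la_0)}\,\frac{dC}{dP} \;=\; \ve\,\frac{4}{m+3}\,\frac{a'(\ve P)}{a(\ve P)}.
\end{equation*}
The partial fractions $\dfrac{C-\la}{C(C-\la/\la_0)}=\dfrac{\la_0}{C}+\dfrac{1-\la_0}{C-\la/\la_0}$ integrate explicitly and yield the conserved quantity
\begin{equation*}
\mathcal{H}(C,P)\;:=\; C^{\la_0}\,|C-\la/\la_0|^{1-\la_0}\, a(\ve P)^{-4/(m+3)}.
\end{equation*}
Using the initial data together with (\ref{ahyp}) and the choice $T_\ve = \ve^{-1-1/100}/(1-\la)$, I get $\ve P(-T_\ve) = -\ve^{-1/100}\to -\infty$ and hence $a(\ve P(-T_\ve)) = 1 + O(e^{-\ga\ve^{-1/100}})$, so the conserved value is $\mathcal{H}_0 = |1-\la/\la_0|^{1-\la_0}(1+o(1))$ as $\ve\to 0$.

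Next I upgrade to a global solution. The level set $\{\mathcal{H}=\mathcal{H}_0\}$ confines $C(t)$ to a compact interval away from $0$ for every fixed $\ve$ (since $a(\ve P)\in(1,2)$ and $\mathcal{H}_0>0$), so no finite-time blow-up or collapse to $0$ can occur, giving the global existence claim. The sign of $C'$ is dictated by $f_1$, i.e.\ by the sign of $C-\la/\la_0$, so $C$ is strictly monotone as long as it does not equal $\la/\la_0$; combined with the invariant, this forces $C$ to approach a limit $C_\infty = \lim_{t\to+\infty}C(t)$, and $\mathcal{H}(C_\infty, P(+\infty))=\mathcal{H}_0$ determines $C_\infty$.

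For the asymptotics I split into cases according to $\la$. For $0\leq \la \leq \tilde\la$, I claim $P(t)\to+\infty$, so $a(\ve P)\to 2$ and $C_\infty>\la$ is the unique root of
\begin{equation*}
C_\infty^{\la_0}\,|C_\infty - \la/\la_0|^{1-\la_0}\; =\; 2^{4/(m+3)}\,|1-\la/\la_0|^{1-\la_0},
\end{equation*}
which is precisely (\ref{cinfla}); the algebraic condition (\ref{tlan}) defining $\tilde\la$ is exactly the boundary for which this equation admits a solution with $C_\infty\geq\la$. A short sign/monotonicity check using $a\in(1,2)$ and the formula for $\mathcal{H}_0$ gives $C_\infty > \la$ strictly whenever $\la\neq\tilde\la$. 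For $\tilde\la<\la<1$, the same algebraic inequality fails: the trajectory cannot reach $a(\ve P)=2$ with $C\geq\la$, so $C$ must cross $\la$ at some unique $t_0$; for $t>t_0$, $P'=C-\la<0$, hence $P(t)\to-\infty$, $a(\ve P)\to 1$, and $\mathcal{H}(C_\infty,-\infty)=\mathcal{H}_0$ yields the unique $C_\infty\in(0,\la)$. The bound $-T_\ve<t_0\leq K(\la)T_\ve$ follows from integrating $P'=C-\la$ on $[-T_\ve,t_0]$ using that $C(t)$ stays uniformly bounded above by a constant $<1$ (via $\mathcal{H}_0$) on that interval.

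The delicate point I expect is the passage through $C=\la$ in the reflection case: there $P'$ vanishes, so the $P$-parameterization breaks down. I would handle it directly on the system, checking that $C(t_0)=\la\neq\la/\la_0$ (since $\tilde\la>\la_0$) implies $C'(t_0)\neq 0$ by (\ref{f1}), so $C$ crosses $\la$ transversally; $P$ attains an isolated maximum at $t_0$, and the invariant $\mathcal{H}$ extends continuously through this point. A separate subtlety is verifying that $\lim_{t\to+\infty}C(t)$ is bounded away from $0$ \emph{uniformly in} $\ve$: this comes from the fact that $\mathcal{H}_0$ is bounded away from $0$ as $\ve\to 0$ (by the choice of initial condition and exponential decay in (\ref{ahyp})), together with $a(\ve P)\leq 2$.
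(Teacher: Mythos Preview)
The paper does not prove this lemma here; it is recalled from \cite{Mu2,Mu3}. Your approach via the first integral $\mathcal{H}(C,P)=C^{\la_0}|C-\la/\la_0|^{1-\la_0}a(\ve P)^{-4/(m+3)}$ is exactly the one used in those references---indeed, this identity reappears in the present paper as ``\cite[identity (3.2)]{Mu3}'' in the proof of Lemma~\ref{Finn}. Your derivation of the invariant, the confinement argument for global existence, the identification of $\tilde\la$ as the threshold $g(\la)=2^{4/(m+3)}g(1)$ for the function $g(C)=C^{\la_0}(\la/\la_0-C)^{1-\la_0}$, and the transversal crossing at $t_0$ are all correct.

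One slip: your justification of $t_0\le K(\la)T_\ve$ is not right as written. You say $C(t)$ is ``bounded above by a constant $<1$'' on $[-T_\ve,t_0]$, but $C(-T_\ve)=1$; and in any case an \emph{upper} bound on $C$ does not control $t_0$ via $P'=C-\la$, since $C-\la\to 0$ near $t_0$. A correct argument splits the interval: pick $t_1$ with $C(t_1)=(1+\la)/2$. On $[-T_\ve,t_1]$ one has $P'\ge(1-\la)/2$, and the invariant pins $\ve P(t_1)$ to a fixed value in $a^{-1}((1,2))$, so $t_1\lesssim T_\ve$. On $[t_1,t_0]$, $\ve P$ stays in a fixed compact set (again by the invariant), hence $a'(\ve P)/a(\ve P)\ge\delta>0$ and $|C(C-\la/\la_0)|\ge\delta'>0$, giving $|C'|\gtrsim\ve$ and thus $t_0-t_1\lesssim\ve^{-1}\ll T_\ve$.
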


\begin{rem}
From the above result, one can define a unique {\bf time of escape} $\tilde T_\ve >-T_\ve$ such that $P(t)$ satisfies 
\be\label{timeescape}
P(\tilde T_\ve) = (1-\la)T_\ve,  \hbox{ for } \ 0<\la<\tilde \la, \qquad P(\tilde T_\ve) = -(1-\la)T_\ve,  \hbox{ for }\ \tilde \la<\la<1. 
\ee
(See \cite[Definition 3.1]{Mu3}.) In addition, 
\be\label{tte}
\tilde T_\ve \leq K(\la) T_\ve, \quad 0<K(\la)<+\infty,
\ee
provided $\la\neq \tilde \la$. Moreover, one has $C(\tilde T_\ve) = c_\infty(\la) + O(\ve^{10}), $
with $c_\infty(\la)$ being the unique solution of the algebraic equation (\ref{cinfla}). See \cite{Mu2,Mu3} for the proof of these results.
\end{rem}

\subsection{Construction of an approximate solution describing the interaction \cite{Mu3}}\label{sec:2}

 Let $t\in [-T_\ve, \tilde T_\ve]$, $Q_c$ given in (\ref{(3)}), $c=c(t)>0$ and $\rho(t)\in \R$ be bounded functions to be chosen later, and
\be\label{defALPHA}
    y:=x-\rho(t), \quad     R(t,x): =\tilde a^{-1}(\ve \rho(t)) Q_{c(t)}(y),
\ee
where $\tilde a (s) := a^{\frac 1{m-1}}(s).$ The parameter $\tilde a$ describes the shape variation of the soliton along the interaction. Concerning the parameters $c(t)$ and $\rho(t)$, it is assumed that, for all $t\in [-T_\ve, \tilde T_\ve]$,
\be\label{r1}
|c(t)-C(t)| +  |\rho'(t) -P'(t) |\leq \ve^{1/100}.
\ee
with $(C(t),P(t))$ from Lemma \ref{ODE}.  Consider a cut-off function $\eta \in C^\infty (\R)$ satisfying
$$
0\leq \eta (s) \leq 1, \quad 0\leq  \eta' (s) \leq 1, \; \hbox{ for any } s\in \R; \quad
\eta(s)\equiv 0 \; \hbox{ for } s\leq -1, \quad  \eta(s)\equiv 1 \; \hbox{ for } s\geq 1.
$$
Define 
\be\label{etac}
\eta_\ve (y) := \eta( \ve y +  2 ),
\ee
From \cite{Mu3}, the form of $\tilde u(t,x)$, the approximate solution, will be the sum of a soliton plus a correction term:
\be\label{defv} 
\tilde u(t,x) :=\eta_\ve (y)(R(t,x)+w(t,x)),
\ee
where $w$ is given by
\be\label{defW}
    w(t,x):= \begin{cases} \ve d(t)A_{c} (y) , \quad \hbox{ if $m=2,4$}, \\  \ve d(t)A_{c} (y)  + \ve^2B_c(t,y), \quad \hbox{ if $m=3$}, \end{cases}
\ee
and  $d(t) := (a'\tilde a^{-m}) (\ve \rho(t)).$ Here $A_{c}(y)$ and   $B_c(t,y)$ are unknown functions.  Note that, by definition, $\tilde u(t, x) = 0$ for all  $y\leq - 3\ve^{-1}$.

We want to estimate the size of the error obtained by inserting $\tilde u$ as defined in (\ref{defv})-(\ref{defW}) in the equation (\ref{aKdV}). For this, we define the residual term
\be\label{2.2bis}
S[\tilde u](t,x) := \tilde u_t + (\tilde u_{xx} -\la \tilde u +a(\ve x) \tilde u^{m})_x.
\ee
For this quantity one has the following 

\begin{prop}[\cite{Mu2,Mu3}]\label{prop:decomp} Suppose $(c(t), \rho(t))$ satisfying (\ref{r1}). There exists $\ga>0$ independent of $\ve$ small, and an approximate solution $\tilde u$ of the form (\ref{defv})-(\ref{defW}), such that for all $t\in [-T_\ve, \tilde T_\ve]$, one has:

\begin{enumerate}

\item \emph{Almost solution}. The error associated to the function $\tilde u(t)$ satisfies
\begin{align*}
 S[\tilde u]  &  =   (c'(t) - \ve f_1(t) -\ve^2 \delta_{m,3} f_3(t))\partial_c\tilde u   \\
 &  + (\rho'(t) -c(t)+ \la -  \ve f_2(t) -\ve^2 \delta_{m,3}f_4(t)) \partial_\rho  \tilde u + \tilde S[\tilde u](t),
\end{align*}
with $\delta_{m,3}$ the Kronecker symbol, $ \partial_\rho  \tilde u :=  \partial_\rho R + O_{H^1(\R)}(\ve^{1/2} e^{-\ve\ga|\rho(t)|})$,  and
\be\label{SH2}
\| \tilde S[\tilde u](t) \|_{H^1(\R)} \leq K \ve^{3/2}e^{-\ga \ve |\rho(t)|}.
\ee
 %$(\delta_{3,3} =1, \delta_{2,3} = \delta_{4,3} =0)$. 

\item $A_c, B_c$ satisfy 
\be\label{Ac}
A_c, \partial_c A_c \in L^\infty(\R), \quad A_c'\in \mathcal Y,  \quad
|A_c(y) | \leq K e^{-\ga y} \; \hbox{ as } y\to +\infty, \quad \lim_{-\infty} A_c  \neq 0,%\\
%\displaystyle{\int_\R Q_c(y) A_c(y)dy =\int_\R yQ_c(y) A_c(y)dy =0;}
\ee
and for $m=3$,
\be\label{Bc}
\begin{cases}
B_c'(t,\cdot) \in L^\infty(\R),\quad  |B_c(t, y) | \leq K e^{-\ga y}e^{-\ve \ga|\rho(t)|} \; \hbox{ as } y\to +\infty, \\
|B_c(t,y)| +|\partial_c B_c(t,y)| \leq K|y| e^{-\ve \ga|\rho(t)|}, \; \hbox{ as } y\to -\infty, %\\
%\displaystyle{\int_\R Q_c(y) B_c(t,y) dy=\int_\R yQ_c(y) B_c(t,y)dy =0.}
\end{cases}
\ee

\item  \emph{$L^2$-solution}. For all $t\in [-T_\ve, \tilde T_\ve]$, $\eta_\ve w(t, \cdot ) \in H^1(\R)$, with
\be\label{H1}
\| \eta_\ve w(t, \cdot ) \|_{H^1(\R)} \leq K \ve^{1/2}e^{-\ga \ve |\rho(t)|},
\ee
and
\be\label{AO}
\abs{\int_\R \eta_\ve w(t,x)Q_c(y)dx} + \abs{\int_\R y \eta_\ve w(t,x)Q_c(y)dx} \leq K \ve^{10}.
\ee

\item In addition, $f_1 (t)=f_1(c(t),\rho(t))$ is given by (\ref{f1}),
\be\label{f2}
 f_2(t) = f_2(c(t), \rho(t))  :=  - \frac{\xi_m}{\sqrt{c(t)}} (\la - 3 \la_0c(t)) \frac{a'}{a}(\ve \rho(t)), \quad  \xi_m   :=  \frac{(3-m)}{(5-m)^2} \frac{(\int_\R Q)^2}{\int_\R Q^2},
\ee
\be\label{f3}
f_3(t) = f_3(c(t), \rho(t)) :=  \frac{\tilde\xi_3}{\sqrt{c(t)}} (c(t)-\la) \frac{a'^2}{a^2}(\ve \rho(t)), \quad   \tilde \xi_3 := \frac \la2 \frac{(\int_\R Q)^2}{\int_\R Q^2},
\ee
and $f_4(t)$ satisfies the decomposition
\be\label{f4}
f_4(t) := f_4^1(t)\frac{a'^2}{a^2}(\ve \rho(t)) + f_4^2(t) \frac{a''}{a}(\ve \rho(t)), \quad  |f_4^i (t)| \leq K.
\ee

\item Finally, one has the estimates
\be\label{SIn}
\abs{\int_\R Q_c \tilde S[\tilde u]} +\abs{\int_\R yQ_c \tilde S[\tilde u]} \leq K \ve^2 e^{-\ve\ga|\rho(t)|} + K\ve^3,
\ee
for $m=2,4$, and 
\be\label{SIn3}
\abs{\int_\R Q_c \tilde S[\tilde u]} +\abs{\int_\R yQ_c \tilde S[\tilde u]} \leq K \ve^3 e^{-\ve\ga|\rho(t)|} + K\ve^4,
\ee
in the case $m=3$.

\end{enumerate}
\end{prop}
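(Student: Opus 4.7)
The plan is to construct $\tilde u$ by a standard formal expansion in powers of $\ve$ around the modulated soliton profile $R$, choosing $A_c$ and (in the cubic case) $B_c$ so that $S[\tilde u]$ is pushed to the required order, while using the modulation ansatz to absorb the non-orthogonal parts of the error into the dynamical laws for $c(t)$ and $\rho(t)$. Throughout, I would work in the moving variable $y=x-\rho(t)$ and organize the computation by powers of $\ve$ after expanding $a(\ve x) = a(\ve\rho(t)) + \ve y a'(\ve\rho(t)) + \tfrac{1}{2}\ve^2 y^2 a''(\ve\rho(t)) + \cdots$ around the soliton center.

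First I would compute $S[\eta_\ve(R+w)]$ as $\eta_\ve S_0 + (\text{cut-off errors})$, where the cut-off errors are exponentially small in $\ve$ (of order $e^{-\ga \ve|\rho(t)|}\ve^N$) thanks to the rapid decay of $Q_c$ to the right and the location of $\eta_\ve$. The interior computation splits into: (i) terms from $R_t + (R_{xx}-\la R + a(\ve\rho)R^m)_x$, which produce the modulation terms $c'\partial_c R + (\rho'-c+\la)\partial_\rho R$ together with an $O(\ve)$ remainder coming from the Taylor expansion of $a(\ve x)$; (ii) the linearized operator acting on $\ve d(t)A_c$; (iii) higher-order and nonlinear terms in $w$. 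Writing $\mathcal L_c := -\partial_y^2 + c - m\,\tilde a^{-(m-1)}(\ve\rho)\,a(\ve\rho)\, Q_c^{m-1} = -\partial_y^2 + c - mQ_c^{m-1}$, the order-$\ve$ equation for $A_c$ becomes $(\mathcal L_c A_c)' = F_c(y) + (\text{source in }f_1,f_2)$, where $F_c$ is an explicit combination of $yQ_c^m$ and derivatives of $Q_c$ coming from the Taylor remainder of $a(\ve x)$ and from $\partial_t[\tilde a^{-1}(\ve\rho)Q_c]$. Solvability in $L^\infty$ (rather than $L^2$) is governed by the kernel $\{Q_c',\Lambda Q_c\}$ of $\mathcal L_c$; imposing the two solvability conditions against $Q_c$ and $yQ_c$ pins down $f_1$ and $f_2$, yielding formulas (\ref{f1}) and (\ref{f2}) by explicit integration using the scaling identities $\int Q_c^{m+1} = c^{\la_0^{-1}}\int Q^{m+1}$ and $\int Q_c^2 = c^{(5-m)/(2(m-1))}\int Q^2$. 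The asymmetry of the source (the $Q_c^m$ term does not decay at $-\infty$ once integrated) forces $A_c$ to have a nonzero limit at $-\infty$, giving (\ref{Ac}); this is precisely why the cut-off $\eta_\ve$ is indispensable.

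The main obstacle is the cubic case, $m=3$, in which the prefactor $(3-m)$ in (\ref{f2}) vanishes and the first-order correction $A_c$ alone yields $f_2\equiv 0$. There, I would push the expansion one order further: plugging $w = \ve d\,A_c + \ve^2 B_c$ into $S[\tilde u]$ and matching the $O(\ve^2)$ terms produces an inhomogeneous equation $(\mathcal L_c B_c)' = G_c(t,y) + (\text{source in }f_3,f_4)$, whose sources involve $(a'/a)^2$, $a''/a$, $\partial_c A_c$, $\partial_\rho A_c$ and the nonlinear self-interaction of $A_c$. The same two solvability conditions, against $Q_c$ and $yQ_c$, determine $f_3$ and $f_4$ and lead to formulas (\ref{f3})-(\ref{f4}); the sign of $f_3$ is extracted from the explicit integrals and exploits $c\neq \la$. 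For the decay (\ref{Bc}), the source $G_c$ decays exponentially to the right but grows only linearly to the left (inherited from $A_c(-\infty)\neq 0$), which yields the two-sided bound on $B_c$ via variation-of-parameters for $\mathcal L_c^{-1}\partial_y^{-1}$; the factor $e^{-\ve\ga|\rho(t)|}$ comes from pulling out $d(t) = (a'\tilde a^{-m})(\ve\rho(t))$ and using (\ref{ahyp}).

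Finally I would read off the conclusions: (iii) follows by combining (\ref{Ac})-(\ref{Bc}) with $\|\eta_\ve A_c\|_{H^1}\lesssim \ve^{-1/2}$, which after multiplication by $\ve d(t)$ gives (\ref{H1}), and (\ref{AO}) comes from the solvability conditions imposed on $A_c$ and $B_c$ (this is how $f_1,\ldots,f_4$ were chosen in the first place). The remainder estimate (\ref{SH2}) is obtained by collecting all unmatched terms of order $\ve^2$ (or $\ve^3$ for $m=3$) in the interior, plus the exponentially-small cut-off contributions supported near $y=-2\ve^{-1}$. The sharper bounds (\ref{SIn})-(\ref{SIn3}) then follow by pairing $\tilde S[\tilde u]$ against $Q_c$ and $yQ_c$: by construction the $O(\ve^{3/2})$ term is pointwise large only on a region of width $O(\ve^{-1})$, but its $L^1$-pairing against the exponentially decaying $Q_c$ and $yQ_c$ loses nothing and the explicit cancellations enforced by the choice of $f_1,f_2$ (respectively $f_3,f_4$) upgrade $\ve^{3/2}$ to $\ve^2 e^{-\ve\ga|\rho|}+\ve^3$ (respectively $\ve^3 e^{-\ve\ga|\rho|}+\ve^4$). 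The bound (\ref{r1}) on $(c,\rho)$ is only used to ensure that the linearization is performed around a profile uniformly close to $Q_{C(t)}$, so that all scaling constants stay in a fixed compact set and the operator $\mathcal L_c$ has a uniform spectral gap.
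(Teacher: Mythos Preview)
Your sketch is correct and follows the same route as the construction in \cite{Mu2,Mu3}, which the paper only cites here but whose mechanics are visible in the sketch of the companion Proposition~\ref{prop:decomp1}: expand $a(\ve x)$ around $\ve\rho(t)$, solve $(\mathcal L_c A_c)'=F_1$ at order~$\ve$, and for $m=3$ iterate once more to produce $B_c$ and $f_3,f_4$. One small slip worth fixing: $\Lambda Q_c$ is \emph{not} in $\ker\mathcal L_c$ (one has $\mathcal L_c\Lambda Q_c=-Q_c$); the two-parameter freedom you invoke is that of the third-order operator $\partial_y\mathcal L_c$ acting on bounded functions, and in the paper's formulation $f_2$ is extracted by pairing $(\mathcal L_c A_c)'$ with the primitive $\int_{-\infty}^{y}\Lambda Q_c$, which after integration by parts is exactly the orthogonality $\int_\R A_cQ_c=0$ that feeds into~(\ref{AO}).
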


\begin{rem}
Note that, even under a correction term of second order, namely $\ve^2 B_c$, one cannot improve the associated error (\ref{SH2}). We believe that this phenomenon is a consequence of the fact that $A_c \not\in L^2(\R)$. %Further improvements should consider e.g. a new, more accurate description of the correction term $w(t)$ in $H^1(\R)$.  
\end{rem}

\subsection{Decomposition of the solution in the interaction region}\label{sec:3}

The next result summarizes the interaction soliton-potential. Roughly speaking, the solution $u(t)$ behaves as the approximate solution $\tilde u(t)$.

\begin{prop}[\cite{Mu3}]\label{prop:I} Suppose $\la\in (0,1)$, $\la\neq \tilde \la$. There exist $K_0,\ve_0>0$ such that the following holds for any $0<\ve <\ve_0$. 

\ben
\item
There exist unique $C^1$ functions $c, \rho : [-T_\ve, \tilde T_\ve] \to \R$ such that, for all $t\in [-T_\ve, \tilde T_\ve]$, the function $z(t) := u(t)-\tilde u(t,c(t), \rho(t))$ satisfies
\be\label{INT41}
 \| z(t) \|_{H^1(\R)} \leq K_0 \ve^{1/2},Ê\quad \int_\R z(t,x) y Q_c(y) dx  = \int_\R z(t,x) Q_c(y)dx=0.
\ee
In addition, $z(t)$ solves the following gKdV equation
\be\label{Eqz1}
\begin{cases}
z_t +  \big\{ z_{xx}  -\la z +  a(\ve x) [ (\tilde u +z)^m - \tilde u^m ] \big\}_x  +  \tilde S[\tilde u]     + c_1'(t) \partial_c \tilde u  + \rho_1'(t) \partial_\rho \tilde u =0,\\
 c_1' (t):= c'(t)-\ve f_1(t)-\ve^2 \delta_{m,3}f_3(t), \; \ \rho_1'(t) := \rho'(t) -c(t)+\la- \ve f_2(t)-\ve^2 \delta_{m,3} f_4(t).
\end{cases}
\ee

\item There is $\ga>0$ independent of $K_0$ such that for every $t\in [-T_\ve, T^*]$,
\be\label{rho1}
 |\rho_1'(t) |   \leq    K  (m-3 +\ve e^{-\ga\ve|\rho(t)|} ) \Big[\int_\R z^2 e^{- \ga\sqrt{c}|y|} \Big]^{1/2}   + K \int_\R e^{- \ga\sqrt{c}|y|}z^2(t) + K\abs{\int_\R yQ_c \tilde S[\tilde u]},
\ee
\be\label{c1}
|c_1'(t)  | \leq  K  \int_\R e^{-\ga\sqrt{c} |y|} z^2(t)  +  K\ve e^{-\ga\ve|\rho(t)| } \Big[ \int_\R e^{-\ga\sqrt{c} |y|} z^2(t)\Big]^{1/2} + K\abs{\int_\R Q_c \tilde S[\tilde u]},
\ee
and
\be\label{todo}
|c(t)-C(t)| +|\rho'(t)-P'(t)| \leq K\ve^{1/2}.
\ee
Finally,
\be\label{cTee}
|c(-T_\ve) -C(-T_\ve)|+ |\rho(-T_\ve) -P(-T_\ve) | + \|z(-T_\ve)\|_{H^1(\R)} \leq K\ve^{10},
\ee
with $K>0$ independent of $K_0$.
\een
\end{prop}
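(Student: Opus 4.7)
\medskip

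My plan follows the standard modulation / bootstrap scheme used for soliton dynamics in perturbed gKdV, but executed with the specific structure of $\tilde u$ from Proposition \ref{prop:decomp}.

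\medskip

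\textbf{Step 1 (Local decomposition by the implicit function theorem).} For each $t$ for which the solution $u(t)$ is sufficiently close in $H^1$ to a soliton-like profile, I want to uniquely solve for $(c(t),\rho(t))$ so that $z(t):=u(t)-\tilde u(t,c(t),\rho(t))$ satisfies the two orthogonality conditions in (\ref{INT41}). This is standard: the Jacobian of $(c,\rho)\mapsto \bigl(\int zQ_c,\ \int zyQ_c\bigr)$ at a nearly-solitonic configuration is dominated by the diagonal matrix with entries $-\int Q_c\partial_c Q_c$ and $\int (yQ_c)_y\, yQ_c$, which are nonzero; the correction terms $\ve dA_c$ (and, for $m=3$, $\ve^2 B_c$) together with the cut-off $\eta_\ve$ produce only $O(\ve^{1/2})$ off-diagonal perturbations, so the inversion goes through. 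The $C^1$ regularity of $(c,\rho)$ is inherited from the $H^1$ continuity of $u(t)$.

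\medskip

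\textbf{Step 2 (Initial condition at $t=-T_\ve$).} The estimate (\ref{expode}) gives $\|u(-T_\ve)-Q(\cdot-(1-\la)(-T_\ve))\|_{H^1}\le Ke^{\ga\ve(-T_\ve)}\le K\ve^{10}$ once $T_\ve$ from (\ref{Te}) is used. Since the base point of $\tilde u$ with $(c,\rho)=(1,-(1-\la)T_\ve)$ differs from $Q(\cdot-(1-\la)(-T_\ve))$ only by the cut-off $\eta_\ve$ and the correction $\ve d\,A_c$, both of which are $O(\ve^{10})$ in $H^1$ in the region where $a(\ve x)\approx 1$ (use the exponential closeness $|a-1|\le Ke^{\ga\ve x}$ for $x\ll 0$ from (\ref{ahyp}) together with the exponential decay of $Q,A_c$), the local decomposition from Step 1 delivers (\ref{cTee}).

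\medskip

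\textbf{Step 3 (Equation for $z$ and modulation ODEs).} Inserting $u=\tilde u+z$ into (\ref{aKdV}), invoking the identity for $S[\tilde u]$ in Proposition \ref{prop:decomp}(1), and writing $c'=\ve f_1+\ve^2\delta_{m,3}f_3+c_1'$ and $\rho'=c-\la+\ve f_2+\ve^2\delta_{m,3}f_4+\rho_1'$, yields (\ref{Eqz1}). To extract ODEs for $(c_1',\rho_1')$, I differentiate the two orthogonality conditions in $t$, substitute $z_t$ from (\ref{Eqz1}), and integrate by parts. The leading $2\times 2$ coefficient matrix is again near-diagonal and invertible; the right-hand side is a sum of (a) quadratic and higher terms in $z$ localized by $e^{-\ga\sqrt c|y|}$, (b) $\ve e^{-\ga\ve|\rho|}$ times $\|e^{-\ga\sqrt c|y|/2}z\|_{L^2}$ coming from the $\tilde a^{-1}$ prefactor and the $\ve d A_c$ correction, and (c) the projections $\int Q_c\tilde S[\tilde u]$ and $\int yQ_c\tilde S[\tilde u]$, already estimated in (\ref{SIn})--(\ref{SIn3}). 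Collecting these bounds produces (\ref{rho1}) and (\ref{c1}); the factor $(m-3)$ in (\ref{rho1}) comes from the explicit form (\ref{f2}) of $\xi_m$, which vanishes for $m=3$, so that the linear-in-$z$ contribution to $\int yQ_c S[\tilde u+z]-\int yQ_c \tilde S[\tilde u]$ disappears in the cubic case.

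\medskip

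\textbf{Step 4 (Bootstrap for the $H^1$ bound, the main obstacle).} The delicate point is to propagate $\|z(t)\|_{H^1}\le K_0\ve^{1/2}$ on the whole interval $[-T_\ve,\tilde T_\ve]$ of length $\sim\ve^{-1-1/100}$. I would perform a standard bootstrap: assume the bound holds on $[-T_\ve,T^*]$ for some maximal $T^*$, and improve it to $\tfrac12 K_0\ve^{1/2}$. The tool is a localized, modified energy–mass Lyapunov functional
\[
\mathcal F(t):=\tfrac12\!\int\!\bigl(z_x^2+\la z^2\bigr)-\tfrac{1}{m+1}\!\int\! a(\ve x)\bigl[(\tilde u+z)^{m+1}-\tilde u^{m+1}-(m+1)\tilde u^m z\bigr]+\alpha(t)\!\int\! z Q_c(y)\,dx,
\]
with $\alpha(t)$ chosen to absorb the obstruction created by the non-conservation of mass (\ref{Ma}). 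Coercivity of the linearized operator on the orthogonal subspace defined in (\ref{INT41}) controls $\|z\|_{H^1}^2$, while the time derivative of $\mathcal F$ is the sum of (i) the inner product of $\tilde S[\tilde u]$ with a function of size $O(\|z\|_{H^1})$, controlled via (\ref{SH2}) and giving a time integral $\int\!\ve^{3/2}e^{-\ga\ve|\rho(t)|}\,dt=O(\ve^{1/2})$ by the monotonicity of $\rho$ provided by Lemma \ref{ODE}, and (ii) modulation remainders, controlled by (\ref{rho1})--(\ref{c1}). Squaring yields $\|z(t)\|_{H^1}^2\le K\ve + K\ve^{20}$, closing the bootstrap.

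\medskip

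\textbf{Step 5 (Comparison with the reduced ODE).} With (\ref{rho1})--(\ref{c1}) in hand and $\|z\|_{H^1}\le K\ve^{1/2}$, the actual $(c,\rho)$ satisfy $c'=\ve f_1(c,\rho)+O(\ve^{3/2})$ and $\rho'=c-\la+O(\ve^{1/2})$; combined with the $O(\ve^{10})$ initial closeness from (\ref{cTee}), a Grönwall comparison on the Lipschitz vector field defining (\ref{c}) yields $|c(t)-C(t)|+|\rho'(t)-P'(t)|\le K\ve^{1/2}$ throughout $[-T_\ve,\tilde T_\ve]$, i.e.\ (\ref{todo}). Finally, the assumption (\ref{r1}) used in Proposition \ref{prop:decomp} is verified a posteriori from (\ref{todo}), making the whole construction self-consistent.

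\medskip

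The most delicate step is clearly Step 4: the long time scale $\tilde T_\ve$ forces every loss of $\ve$ in the energy estimate to be tracked carefully, and the coercivity argument has to accommodate the $\ve d A_c+\ve^2 B_c$ corrections (in particular, the fact that $A_c\notin L^2$, noted in the remark after Proposition \ref{prop:decomp}), which is exactly why the orthogonality set is chosen with $Q_c$ and $yQ_c$ and why the additional correction term $\alpha(t)\!\int zQ_c$ must be inserted in $\mathcal F$.
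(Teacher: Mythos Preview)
The paper does not actually prove this proposition: it is stated without proof and attributed to \cite{Mu3}, with only a short remark afterward observing that (\ref{cTee}) follows from (\ref{expode}) and (\ref{c}). So there is no proof in the present paper to compare against, and your sketch is effectively a reconstruction of what is carried out in \cite{Mu2,Mu3}. Your outline---implicit function theorem for the decomposition, derivation of (\ref{Eqz1}) from Proposition \ref{prop:decomp}, differentiation of the orthogonality conditions to get (\ref{rho1})--(\ref{c1}), and a bootstrap closed by a Weinstein-type functional---is indeed the scheme used there, and the analogous argument is visible in Section \ref{5} of this paper for the backward stability of $v(t)$.

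Two points in Step 4 deserve correction. First, the extra term $\alpha(t)\int zQ_c$ you add to $\mathcal F$ is identically zero by the orthogonality condition (\ref{INT41}), so it cannot absorb anything; what is actually done (see the functional in Section \ref{5} of this paper, or \cite[Proposition 5.1]{Mu2}) is to take the quadratic part as $\tfrac12\int(z_x^2+c(t)z^2)$ rather than $\tfrac12\int(z_x^2+\la z^2)$, so that the Hessian matches the linearized operator $\mathcal L=-\partial_{yy}+c-mQ_c^{m-1}$ and coercivity on the orthogonal subspace is immediate. The price is a new term $\tfrac12 c'(t)\int z^2$ in $\partial_t\mathcal F$, and \emph{this} is where the Virial estimate (Lemma \ref{VL} here, Lemma 6.4 in \cite{Mu2}) enters: it controls the time integral of $\int e^{-|y|/A_0}z^2$ and hence, via (\ref{c1}), the time integral of $|c_1'|$. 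You do not mention the Virial identity, but it is an essential ingredient of the bootstrap in \cite{Mu2,Mu3}, not an optional refinement. Second, the monotonicity of $\rho$ you invoke to integrate $\ve^{3/2}e^{-\ga\ve|\rho(t)|}$ fails in the reflection regime $\tilde\la<\la<1$ (Lemma \ref{ODE}(2)(b) shows $P$ is not monotone there); the correct argument splits the time interval around $t_0$ as in \cite[estimate (4.73)]{Mu3}, exactly as is done later in this paper in Section \ref{8}.
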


\begin{rem}
Note that estimates (\ref{todo}) improve (\ref{r1}). In addition, (\ref{cTee}) are consequences of  (\ref{expode}) at time $-T_\ve$, and (\ref{c}). Moreover, from the proof of the above result, (\ref{expode}) and (\ref{SH2}), one can see that e.g. an estimate of the order $\|z(t)\|_{H^1(\R)} \leq K_0\ve^{10}$ is valid for all sufficiently early times, namely $t\leq -K\ve^{-1}|\log\ve|$, with $K>0$ large enough. %However, this estimate fails to be true for times $t\sim -\ve^{-1}$.
\end{rem}

\subsection{Virial estimate}
A better understanding of the estimate on the scaling parameter (\ref{c1}) needs the introduction of a Virial estimate, in the spirit of \cite{Mu2} (see Lemma 6.4). See also \cite{H} for a similar result.

First, we define some auxiliary functions. Let $\phi \in C^\infty(\R)$ be an \emph{even} function satisfying the following properties
\be\label{psip}
\begin{cases}
\phi' \leq 0 \; \hbox{ in } [0, +\infty); \quad  \phi  \equiv 1 \; \hbox{ in } [0,1], \\
\phi (x) = e^{-x}  \; \hbox{ on } [2, +\infty) \quad\hbox{and}\quad  e^{-x} \leq \phi (x) \leq 3e^{-x}  \; \hbox{ on } [0,+\infty).
\end{cases}
\ee
Now, set $\psi(x) := \int_0^x \phi $. It is clear that $\psi$ is an odd function. Finally, for $A>0$, denote 
\be\label{psiA}
\psi_A(x) := A(\psi(+\infty) + \psi(\frac xA))>0. 
\ee
Note that $\lim_{x\to -\infty} \psi(x) =0$ and  $e^{-|x|/A} \leq \psi_A'(x)   \leq 3e^{-|x|/A}$. We claim the following

\begin{lem}[\cite{Mu3}]\label{VL} There exist $K, A_0, \delta_0>0$  such that for all $t\in [-T_\ve, \tilde T_\ve]$ and for some $\ga =\ga(A_0)>0$,
\be\label{dereta}
 \partial_t \int_\R  z^2(t,x) \psi_{A_0}(y)   \leq   -\delta_0  \int_\R ( z_x^2 + z^2 )(t,x) e^{-\frac 1{A_0} |y|}  + KA_0 \ve^{5/2}.
\ee
\end{lem}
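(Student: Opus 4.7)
The strategy is a weighted virial monotonicity, adapting the Martel--Merle approach for pure gKdV (see \cite{MMnon,MMT}). I would set $\mathcal I(t) := \int_\R z^2(t,x)\,\psi_{A_0}(y)\,dx$ with $y = x-\rho(t)$ and compute $\mathcal I'(t)$ by combining the equation \eqref{Eqz1} for $z$ with the motion of the frame. Integrating by parts, the Airy part contributes $-3\int z_x^2\psi'_{A_0} + \int z^2\psi'''_{A_0}$, the $-\la z_x$ part contributes $-\la\int z^2\psi'_{A_0}$, the moving frame contributes $-\rho'(t)\int z^2\psi'_{A_0}$, and the nonlinearity, after expanding $(\tilde u+z)^m - \tilde u^m = m\tilde u^{m-1}z + O(z^2)$ and using $\|z\|_{L^\infty}\lesssim\ve^{1/2}$ on the cubic residuals, produces $m\int a(\ve x)\tilde u^{m-1}z^2\psi'_{A_0}$ modulo $\ve a'(\ve x)$- and $\tilde u_x$-terms of controlled size.

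Collecting, the main part of $\mathcal I'(t)$ reads
\[
-\int\psi'_{A_0}(y)\bigl[3z_x^2 + (\la+\rho'(t))z^2 - m\,a(\ve x)\tilde u^{m-1}z^2\bigr]dx + \int z^2\psi'''_{A_0}\,dx + \mathcal R(t),
\]
where $\mathcal R(t)$ gathers the cubic nonlinear residuals, the source $\tilde S[\tilde u]$, and the modulation terms $c_1'\pd_c\tilde u$, $\rho_1'\pd_\rho\tilde u$ from \eqref{Eqz1}. Since $|\psi'''_{A_0}|\le KA_0^{-2}\psi'_{A_0}$, the third-derivative piece is absorbed once $A_0$ is large; and from \eqref{r1}, \eqref{f2}, \eqref{rho1} one has $|\la+\rho'(t)-c(t)|\le K\ve$, so we may replace $\la+\rho'(t)$ by $c(t)$ at the cost of an $O(\ve)$ error. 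Using \eqref{H1}, the potential $\tilde u^{m-1}$ may be replaced by $R^{m-1} = \tilde a^{-(m-1)}(\ve\rho)Q_c^{m-1}(y)$ with an $O(\ve^{1/2})$ perturbation. Observing the cancellation $a(\ve\rho)\tilde a^{-(m-1)}(\ve\rho)=1$, the main quadratic form reduces to the standard one.

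The decisive step is then a weighted spectral coercivity: under the exact orthogonalities $\int zQ_c\,dx = \int zyQ_c\,dx = 0$ provided by \eqref{INT41}, for $A_0$ large enough and some $\delta_0>0$ independent of $\ve$,
\[
\int\psi'_{A_0}(y)\bigl[3z_x^2 + c\,z^2 - mQ_c^{m-1}(y)z^2\bigr]dx \ \ge\ 2\delta_0\int(z_x^2+z^2)\,e^{-|y|/A_0}\,dx.
\]
This is a localised Weinstein positivity for $\mathcal L_+ := -\pd_y^2 + c - mQ_c^{m-1}$, whose kernel is $\mathrm{span}\{Q_c'\}$ and which carries one negative eigenvalue; the orthogonalities against $Q_c$ and $yQ_c$ project out the bad directions (the latter approximating $\pd_c Q_c$), and the commutator estimate $|[\mathcal L_+,\psi'_{A_0}]|\lesssim A_0^{-1}$ closes the argument once $A_0$ is fixed large enough. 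Finally $\mathcal R(t)$ is bounded by Cauchy--Schwarz and Young's inequality using \eqref{SH2}, \eqref{rho1}--\eqref{c1} and $\|z\|_{H^1}\le K_0\ve^{1/2}$: the source and modulation terms give at most $\tfrac12\delta_0\int(z_x^2+z^2)e^{-|y|/A_0}+KA_0\ve^{5/2}$, the first half absorbed; the cubic nonlinear residual contributes $\lesssim\ve^{1/2}\int z^2\psi'_{A_0}$, likewise absorbable. Combining yields \eqref{dereta}.

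The main obstacle is the weighted spectral coercivity: while the unweighted Weinstein--Martel--Merle positivity is classical, preserving it with a $\delta_0$ uniform in $\ve$ and in the slowly varying potential $a(\ve\cdot)$, after localisation by the non-compactly-supported cut-off $\psi'_{A_0}$, requires a careful choice of $A_0$ and a delicate commutator analysis. The exactness of the orthogonalities in \eqref{INT41} (not merely their $O(\ve^{10})$ smallness) is essential to eliminate the negative and neutral directions of $\mathcal L_+$ without introducing $\ve$-independent errors.
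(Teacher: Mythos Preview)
The paper does not actually give a proof of this lemma; it is imported from \cite{Mu3} (and, as remarked just before the statement, the argument is in the spirit of \cite[Lemma 6.4]{Mu2} and \cite{H}). Your outline is the standard Martel--Merle weighted virial computation that those references carry out, and it is the correct route: differentiate $\int z^2\psi_{A_0}$, integrate by parts to isolate the quadratic form $\int\psi'_{A_0}[3z_x^2+c z^2-mQ_c^{m-1}z^2]$, invoke localised coercivity under the orthogonalities in \eqref{INT41}, and treat the source, modulation and cubic terms as remainders.

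Two small imprecisions worth flagging. First, the operator governing the virial coercivity is $-3\partial_y^2+c-mQ_c^{m-1}$, not the linearised operator $\mathcal L_+=-\partial_y^2+c-mQ_c^{m-1}$; the factor $3$ changes the spectral picture, and the positivity one needs here (due to Martel--Merle) is a separate statement from the Weinstein positivity of $\mathcal L_+$. Relatedly, $yQ_c$ is not an approximation of $\partial_cQ_c=\Lambda Q_c$ (the latter is a combination of $Q_c$ and $yQ_c'$), so your heuristic for why the second orthogonality helps is off; the actual mechanism is the one specific to the virial form. Second, the modulation contribution $2\rho_1'\int z\psi_{A_0}\partial_\rho\tilde u$ is the most delicate remainder: a crude Cauchy--Schwarz gives a term of size $KA_0\int e^{-\ga|y|}z^2$ (no small prefactor for $m=2,4$), which is not directly absorbable when $A_0$ is large. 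In \cite{Mu2,Mu3} this is handled by exploiting the precise structure of $\partial_\rho\tilde u$ and the near-constancy of $\psi_{A_0}$ on the soliton scale; you should be explicit that this step needs more than a generic Young inequality.
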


A simple but very important conclusion of the last estimate, is the following: one has, from (\ref{cTee}) and (\ref{dereta}),
\bea\label{intc1}
\int_{-T_\ve}^{t} \int_\R ( z_x^2 + z^2 )(t,x) e^{-\frac 1{A_0} |y|} dx ds  & \leq &  K \Big[\int_\R  z^2(-T_\ve) \psi_{A_0}(y) - \int_\R  z^2(t) \psi_{A_0}(y)\Big]  + K \ve^{3/2-1/100} \nonu \\
& \leq & K \ve^{3/2- 1/100}.
\eea
for all $t\in [-T_\ve, \tilde T_\ve]$, by taking $A_0$ large enough, independent of $\ve$ and $K^*$. In other words, we improve the estimate on the integral of $z^2+ z_x^2$ near the soliton (a crude integration of (\ref{intc1}) gives a bound $ K\ve^{-\frac 1{100}}$). Finally, from (\ref{c1}) and (\ref{SIn})-(\ref{SIn3}), we improve estimate (4.57) in \cite{Mu3}, to obtain
\be\label{Intec1}
\int_{-T_\ve}^{t} |c_1'(s)|ds \leq  K\ve^{3/2- 1/100}.
\ee
(See \cite[estimate (4.73)]{Mu3} for the integration of terms of the form $\ve e^{-\ga\ve|\rho(t)|}$.)

\medskip

\section{The case of a decreasing potential}\label{4}

In this section we deal with the problem of existence of a \emph{pure} soliton-like solution as time goes to $+\infty$. Our objective is to briefly describe the dynamics of such a solution, say $v(t)$, in order to \emph{compare} its behavior with the solution $u(t)$ described in Theorems \ref{MT} and \ref{MTL1}.  We sketch some of these results, being straightforward generalizations of the results of Section 4 in \cite{Mu3}. First, we state the following existence result (see also \cite[Proposition 7.2]{Mu2}).

\begin{prop}\label{Existv}  Suppose $x_0\in \R$ and $0< \la <1$ fixed, with $\la\neq \tilde \la$. Let $c^+>0$ with $|c^+-c_\infty(\la)|\leq K\ve^{1/2}$, where $c_\infty=c_\infty(\la)>0$ is the scaling given by Theorem \ref{MTL1}. Let $\kappa(\la)$ be the parameter defined in (\ref{kmla}). For $\ve_0>0$ small enough, the following holds for any $0<\ve < \ve_0$. There exists a \emph{unique} solution $v \in C(\R, H^1(\R))$ of (\ref{aKdV}) such that 
\be\label{mlim0}
\lim_{t\to +\infty} \|v(t) - \kappa(\la)Q_{c^+}(\cdot - (c^+-\la)t -x_0) \|_{H^1(\R)} =0.
\ee
Moreover, there are constants $K,\ga>0$ such that
\be\label{mmTep}
\| v(t) -  \kappa(\la) Q_{c^+}(\cdot - (c^+-\la)t -x_0) \|_{H^1(\R)} \leq  K e^{-\ve\ga t},
\ee
provided $0<\ve<\ve_0$ small enough.
\end{prop}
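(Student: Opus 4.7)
The plan is to construct $v$ as a subsequential limit of solutions defined by a final-value problem at times $T_n \to +\infty$. For each $n$, let $v_n$ denote the unique $H^1$ solution of (\ref{aKdV}) with final data
\begin{equation*}
v_n(T_n, x) := \kappa(\la) Q_{c^+}(x - (c^+ - \la)T_n - x_0),
\end{equation*}
whose backward-in-time local existence follows from the Cauchy theory of \cite{Mu2}. The heart of the argument is a uniform-in-$n$ bound of the form
\begin{equation}\label{planbs}
\|v_n(t) - R^+(t)\|_{H^1(\R)} \leq K e^{-\ve\ga t}, \qquad t^{*} \leq t \leq T_n,
\end{equation}
with $R^+(t,x) := \kappa(\la) Q_{c^+}(x - (c^+ - \la)t - x_0)$ and $K, \ga, t^{*}$ independent of $n$. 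Once (\ref{planbs}) is established, weak compactness in $H^1(\R)$ combined with local strong convergence through the equation produces a subsequential limit $v_{n_k} \to v$ uniformly on compact time intervals; passing to the limit yields a global solution $v$ of (\ref{aKdV}) satisfying (\ref{mlim0})--(\ref{mmTep}). Uniqueness follows by applying the same bootstrap to the difference of two candidate solutions.

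To prove (\ref{planbs}) I would use a modulation decomposition. The key preliminary observation is that $R^+$ is an \emph{exact} soliton of the constant-coefficient equation $R_t + (R_{xx} - \la R + a_\infty R^m)_x = 0$, where $a_\infty$ is the limit of $a$ on the side of $\R$ into which the soliton propagates: $a_\infty = 2$ in the refraction regime $0<\la<\tilde\la$ (since $c^+ > \la$ and the soliton moves to $+\infty$, the identity $\kappa^{m-1}=1/2$ gives $2\kappa^m Q_{c^+}^m = \kappa Q_{c^+}^m$), while $a_\infty = 1$ and $\kappa = 1$ in the reflection regime $\tilde\la<\la<1$, where $c^+ < \la$ and the soliton moves to $-\infty$. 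Consequently the residual obtained by inserting $R^+$ into (\ref{aKdV}) reduces to $\partial_x\!\left((a(\ve x) - a_\infty)(R^+)^m\right)$, which is pointwise dominated by $K e^{-\ve\ga t}$ at the soliton location, thanks to hypothesis (\ref{ahyp}). I would then decompose
\begin{equation*}
v_n(t,\cdot) = \kappa(\la) Q_{c_n(t)}(\cdot - \rho_n(t)) + z_n(t,\cdot),
\end{equation*}
with the standard orthogonality conditions $\int z_n(t,x) Q_{c_n}(x-\rho_n)\,dx = \int z_n(t,x)(x - \rho_n) Q_{c_n}(x-\rho_n)\,dx = 0$, which give modulation equations controlling $|c_n'(t)| + |\rho_n'(t) - c_n(t) + \la|$ by $\|z_n(t)\|_{H^1}^2 + K e^{-\ve\ga t}$. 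A coercive/monotone functional $\mathcal N[z_n](t)$ in the spirit of \lemref{VL} and of Section~7 of \cite{Mu2}, coercive modulo the two symmetry directions, then satisfies
\begin{equation*}
\tfrac{d}{dt}\mathcal N[z_n](t) \geq -K e^{-\ve\ga t}\|z_n(t)\|_{H^1(\R)} - K\|z_n(t)\|_{H^1(\R)}^3.
\end{equation*}
Integrating backward from $T_n$ with the trivial final values $z_n(T_n) \equiv 0$, $c_n(T_n) = c^+$, $\rho_n(T_n) = (c^+ - \la)T_n + x_0$ closes (\ref{planbs}) via a standard bootstrap, jointly with $|c_n(t) - c^+| + |\rho_n(t) - (c^+ - \la)t - x_0| \leq K e^{-\ve\ga t}$.

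The main technical obstacle is the coercivity of $\mathcal N$ in the refraction regime, where the linearized operator is $L_{c^+} := -\partial_y^2 + \la - m \kappa^{m-1} Q_{c^+}^{m-1}$ with the rescaled factor $m\kappa^{m-1} = m/2$ in place of the standard $m$; I would verify positivity modulo the two orthogonality directions using the explicit spectrum of $L_{c^+}$ and the fact that $c^+$ is close to $c_\infty(\la)>0$, which by the subcritical assumption $m\in\{2,3,4\}$ stays in a range where the variational characterization of $Q_{c^+}$ survives the rescaling. Once coercivity is in hand, the remaining compactness and uniqueness steps are a direct adaptation of \cite[Proposition 7.2]{Mu2} and of the construction carried out in Section~4 of \cite{Mu3}.
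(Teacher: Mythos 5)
Your overall strategy---run the equation backward from exact soliton data at $T_n\to+\infty$, establish a uniform modulation/bootstrap estimate, then pass to the limit by compactness---is exactly the Martel-type construction that the paper imports from \cite[Proposition~7.2]{Mu2}, and your observation that the residual of $R^+$ reduces to $\partial_x\big[(a(\ve x)-a_\infty)(R^+)^m\big]$ with $a_\infty\kappa^{m-1}=1$ is the correct mechanism behind the $e^{-\ve\ga t}$ rate. Two points need to be fixed. First, the coercivity difficulty you flag in the refraction regime is a phantom: linearizing $a(\ve x)u^m$ about $\kappa Q_{c^+}$ yields the potential $-m\,a_\infty\kappa^{m-1}Q_{c^+}^{m-1}$, and since $a_\infty\kappa^{m-1}=1$ this is the \emph{standard} $-mQ_{c^+}^{m-1}$, not $-\tfrac m2 Q_{c^+}^{m-1}$; moreover the zeroth-order coefficient should be $c^+$, not $\la$, after adding $(c^+-\la)$ times the $L^2$ mass to the energy $E_a$. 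The needed coercivity is just the usual Weinstein one, with no new spectral verification required.

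The substantive gap is that you never identify the monotone quantity that actually closes the backward bootstrap and gives uniqueness, which the paper explicitly calls ``the key argument.'' As written, integrating $\frac{d}{dt}\mathcal N\geq -Ke^{-\ve\ga t}\|z_n\|_{H^1}-K\|z_n\|_{H^1}^3$ over $[t,T_n]$ costs a factor $\ve^{-1}$ from $\int_t^{T_n}e^{-\ve\ga s}\,ds$, and it is not automatic that the resulting constants close with $K$ independent of $\ve$. The device that avoids this is a one-sided monotone \emph{modified} mass: $\mathcal M[u](t):=\int_\R u^2/(2a(\ve x))\,dx$, satisfying $\mathcal M[u](t)-\mathcal M[u](t')\leq Ke^{-\ve\ga t}$ for $t'\geq t$, works in the refraction regime $c_\infty(\la)>\la$ (and in fact covers all $0<\la<\tilde\la$, already done in \cite{Mu2}); but in the reflection regime $\tilde\la<\la<1$, where $c_\infty(\la)<\la$, one must instead use $\hat M[u](t):=\tfrac12\int_\R a^{1/m}(\ve x)u^2\,dx$, whose exact one-sided monotonicity $\hat M[u](t)\geq\hat M[u](t')$ for $t'\geq t$ rests precisely on hypothesis (\ref{3d1d}). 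Combining the appropriate modified mass with the Weinstein energy yields the clean one-sided inequality your bootstrap needs (and also drives the uniqueness argument). Your sketch has the right architecture but omits this regime-dependent choice of monotone functional, which is where the content of the paper's proof actually lives.
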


\begin{rem}
This result has been proved in \cite{Mu2} for all $0<\la\leq \la_0$ (namely, with $\kappa(\la)=2^{-1/(m-1)}$). The key argument in the proof was the introduction of the modified mass $\mathcal M[u](t)$, given by
\be\label{Mback}
\mathcal M[u](t) :=\int_\R \frac {u^2(t,x)}{2 a(\ve x)} dx,
\ee
which satisfies \cite{Mu2}, for all $t, t' \geq \tilde T_\ve$, with $t'\geq t$, $\mathcal M[u](t) -\mathcal M[u](t') \leq K e^{-\ve \ga t}.$
From the proof of this result, we see that the same conclusion holds for any $\la_0<\la<\tilde \la$, with no differences in the proof, since one still has $c_\infty(\la)>\la$. However, in the case $\tilde \la<\la<1$, one has $c_\infty(\la) <\la$. Therefore, one needs a modification in the main argument of the proof. It turns out that, instead of considering the modified mass $\mathcal M[u]$, one should consider the modified mass $\hat M[u]$, introduced in \cite{Mu2}, given by
\be\label{hM}
\hat M[u](t) := \frac 12 \int_\R a^{1/m}(\ve x) u^2(t,x) dx.
\ee
Thanks to (\ref{3d1d}) this quantity satisfies, for any $m=2,3$ and $4$, the following property \cite{Mu2}: There exists $\ve_0>0$ such that, for all $0<\ve\leq \ve_0$, and for all $t'\geq t$,
\be\label{hM3}
\hat M[u](t) - \hat M[u](t') \geq 0.
\ee
After this modification, the proof of Proposition \ref{Existv} is direct from \cite[Proposition 7.2]{Mu2}.
\end{rem}

Let us come back to the study of the function $v(t)$. A straightforward consequence of Proposition \ref{Existv} is that, for all $\ve>0$ small enough,  
$$
\| v(\tilde T_\ve) - k(\la) Q_{c^+}(\cdot - (c^+ -\la)\tilde T_\ve -x_0) \|_{H^1(\R)} \leq  K \ve^{10}.
$$
Now, we want to describe the dynamics of this solution in the region $t\in [- T_\ve, \tilde T_\ve]$. The natural step is, following Section 4 in \cite{Mu3}, the \emph{construction of an approximate solution} $\tilde v(t)$, with dynamical parameters $\tilde c(t)$ and $\tilde \rho(t)$, of the form (compare with (\ref{defv}))
\be\label{hatv}
\tilde v(t) = \tilde v(t, \tilde c(t), \tilde \rho(t)) :=  \tilde{\eta}_\ve(\tilde y) (\tilde R(t) + \tilde w(t)),
\ee
such that $\tilde v(\tilde T_\ve)$ is close enough to $v(\tilde T_\ve)$. Here $\tilde y :=x-\tilde \rho(t)$, $\tilde{\eta}_\ve (\tilde y) := \eta (2-\ve \tilde y )$, $\tilde R(t)$ is the modulated soliton from (\ref{defALPHA}) with parameters $\tilde c(t)$ and $\tilde \rho(t)$, $\tilde d(t) := \frac{a'}{\tilde a^m}(\ve \tilde \rho(t))$, and
\be\label{tww}
\tilde w(t,x):= \begin{cases} 
\ve \tilde d(t)\tilde A_{\tilde c} (\tilde y) ,\quad \hbox{ if $m=2,4$}, \\
\ve \tilde d(t)\tilde A_{\tilde c} (\tilde y)  + \ve^2 \tilde B_{\tilde c}(t,\tilde y), \quad \hbox{ if $m=3$}. 
\end{cases}
\ee

\begin{rem}
Note that we have chosen $\tilde \eta_\ve$ such that $\tilde \eta_\ve(\tilde y) = 0$ for all $\tilde y\geq \frac 3\ve$, and $\tilde \eta_\ve(\tilde y) = 1$ for $\tilde y\leq \frac 1\ve$. This choice is the {\bf opposite} to the corresponding one associated to $\eta (y)$ (see (\ref{etac})). 
\end{rem}

Let $\la\in (0,1)$, $\la\neq \tilde \la$, and $X_0\in\R $ with $|X_0|\leq \ve^{-1/2-1/100}$. Let $(\tilde C(t), \tilde P(t))$ be the unique solution of the following \emph{backward} dynamical system (cf. Lemma \ref{ODE})
\be\label{ct}
\begin{cases}
\tilde C'(t)  = \ve f_1(\tilde C(t),\tilde P(t)), \qquad \tilde C(\tilde T_\ve) = c_\infty(\la), \\
\tilde P'(t) = \tilde C(t) -\la, \qquad \tilde P(\tilde T_\ve) = P(\tilde T_\ve) + X_0,%(c^+ -\la)\tilde T_\ve +x_0,
\end{cases}
\ee
(note that $\tilde P(\tilde T_\ve)$ can be negative, as in the case $\tilde \la<\la<1$.) For further purposes, we need the following 

\begin{lem}\label{Finn}
Assume $|X_0|\leq   \ve^{-1/2 -1/100}$. Let $(C(t),P(t))$ and $(\tilde C(t), \tilde P(t))$ be the solutions of (\ref{c}) and (\ref{ct}) respectively. Then, for all $t\in [-T_\ve, \tilde T_\ve]$,
\be\label{PtP}
\ve |P(t) -\tilde P(t)| + |C(t) -\tilde C(t)| \leq K\ve^{1/2-1/100}.
\ee
\end{lem}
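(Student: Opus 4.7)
The system (\ref{c}) and its backward analogue (\ref{ct}), rewritten in the slow variables $s := \ve t$ and $\hat P := \ve P$, become the single autonomous planar system
\[
\frac{dC}{ds} = F_1(C, \hat P) := \frac{4}{m+3}\, C\Big(C - \frac{\la}{\la_0}\Big)\,\frac{a'(\hat P)}{a(\hat P)},\qquad \frac{d\hat P}{ds} = C - \la.
\]
Direct Gr\"onwall comparison fails because $\ve(\tilde T_\ve + T_\ve) = O(\ve^{-1/100})$ produces an exponentially blowing Lipschitz factor. My plan is therefore to exploit the first integral arising from the separability $(C-\la)\,dC/[C(C-\la/\la_0)] = \frac{4}{m+3}(a'/a)(\hat P)\,d\hat P$: partial fractions give $\la_0/C + (1-\la_0)/(C-\la/\la_0)$ on the left, and integration yields the conserved quantity
\[
H(C, \hat P) := |C|^{\la_0}\,|C - \la/\la_0|^{1-\la_0}\,a(\hat P)^{-4/(m+3)},
\]
which is exactly the left-hand side of the algebraic equations (\ref{tlan})--(\ref{cinfla}) defining $\tilde\la$ and $c_\infty(\la)$.

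The first step will be to show that the two trajectories live on essentially the same level set of $H$. Let $H_1 := H(C(t), \ve P(t))$ and $H_2 := H(\tilde C(t), \ve \tilde P(t))$, both independent of $t$. Evaluating at $t = \tilde T_\ve$, the remark after Lemma~\ref{ODE} gives $|C(\tilde T_\ve) - \tilde C(\tilde T_\ve)| = O(\ve^{10})$, while the $\hat P$-discrepancy $\ve|X_0| \leq \ve^{1/2-1/100}$ enters $H_1 - H_2$ only multiplied by $|a'(\xi)|$ evaluated at some $\xi$ with $|\xi| \gtrsim \ve^{-1/100}$, which is super-polynomially small by (\ref{ahyp}). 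Hence $|H_1 - H_2| \leq K\ve^{10}$.

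In the refraction case $0 < \la < \tilde\la$, where $C > \la$ throughout so that $\hat P$ is strictly monotone in $t$, I would invert $H(C, \hat P) = H_i$ as $C = C_0(H_i, \hat P)$ and express time by the quadrature $t(\hat P) = -T_\ve + \ve^{-1}\int_{\hat P(-T_\ve)}^{\hat P} du/(C_0(H_i, u) - \la)$. The time-shift $\sigma(\hat P) := \tilde t(\hat P) - t(\hat P)$ then satisfies
\[
\Big|\frac{d\sigma}{d\hat P}\Big| \leq \frac{K\,|H_1 - H_2|}{\ve\,(\min|C_0 - \la|)^2} \leq K\ve^9,
\]
so $\sigma$ is essentially constant over the full range $|\hat P| \leq K\ve^{-1/100}$. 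Its value at the endpoint $\hat P = \ve P(\tilde T_\ve)$ is read off directly from the condition $\tilde P(\tilde T_\ve) = P(\tilde T_\ve) + X_0$ together with $\hat P'/\ve \approx c_\infty - \la$ there, yielding $|\sigma(\hat P)| \approx |X_0|/(c_\infty - \la) \leq K\ve^{-1/2-1/100}$ uniformly. The identity $\hat P(t) = \tilde{\hat P}(t + \sigma(\ve P(t)))$ then gives $|\hat P(t) - \tilde{\hat P}(t)| \leq \sup|\tilde{\hat P}'|\cdot|\sigma| \leq K\ve|\sigma| \leq K\ve^{1/2-1/100}$, and $|C(t) - \tilde C(t)| \leq K|H_1 - H_2| + K|\hat P - \tilde{\hat P}|(t) \leq K\ve^{1/2-1/100}$, which is (\ref{PtP}).

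The main difficulty I anticipate is the reflection case $\tilde\la < \la < 1$, where $C$ crosses $\la$ at a unique turning time $t_0$ (Lemma~\ref{ODE}(2)(b)), so $\hat P$ is no longer monotone and $C_0(H_i,\cdot)$ is two-valued near $\hat P = \hat P(t_0)$. Here I would split $[-T_\ve, \tilde T_\ve]$ into the two monotone sub-arcs separated by the respective turning times $t_0, \tilde t_0$ of the two trajectories, parametrize orbits by $C$ rather than $\hat P$ in a neighborhood of $C = \la$ (using $F_1(\la, \hat P(t_0)) \neq 0$, since $a'$ does not vanish at $\hat P(t_0)$), and apply the implicit function theorem to $\{H = H_i\} \cap \{C = \la\}$ to obtain $|\tilde t_0 - t_0| \leq K|X_0|$. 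The same time-shift analysis is then run on each monotone branch and matched at the turning point, yielding the same bound $K\ve^{1/2-1/100}$.
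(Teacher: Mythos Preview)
Your argument is correct and rests on the same conserved quantity $H(C,\hat P)=C^{\la_0}(\la/\la_0-C)^{1-\la_0}a(\hat P)^{-4/(m+3)}$ that the paper uses, but the way you exploit it is genuinely different.

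The paper proceeds by a differential inequality plus Gronwall. From the two first-integral identities it subtracts to get $|f(C)-f(\tilde C)|\leq K\ve\,a'(\ve P)\,|\Delta P|+K\ve^{2}|\Delta P|^{2}$, and since $f'(C)\neq 0$ whenever $C\neq\la$ this yields $|\Delta C|\leq K(\alpha)\big[\ve e^{-\ga\ve|P|}|\Delta P|+\ve^{2}|\Delta P|^{2}\big]$ on each region where $|C-\la|\gtrsim\alpha$. Because $\Delta P'=\Delta C$ and the kernel $\ve e^{-\ga\ve|P|}$ has time-integral $O(1)$ rather than $O(T_\ve)$, Gronwall closes with a bounded constant: $|\Delta P(t)|\leq K|\Delta P(\tilde T_\ve)|\leq K|X_0|$ and $|\Delta C|\leq K\ve|X_0|$. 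The short interval $[t_0-\alpha/\ve,\,t_0+\alpha/\ve]$ around the turning point is handled by crude integration.

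You instead parametrise the level curves $\{H=H_i\}$ by $\hat P$, compute the time-of-flight quadrature, and show that the time-shift $\sigma(\hat P)$ is essentially constant (variation $O(\ve^{9-1/100})$) and equal to $-X_0/(c_\infty-\la)$ at the endpoint, whence $\ve|\Delta P|\leq K\ve|\sigma|\leq K\ve^{1/2-1/100}$. This is a more geometric, phase-plane viewpoint: it makes explicit that the two orbits are nearly time-translates of one another. The paper's route is shorter and avoids any change of parametrisation; your route gives a sharper picture of \emph{why} the estimate holds.

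For the reflection case your sketch is sound but could be streamlined: since by Lemma~\ref{ODE} the scaling $C(t)$ is globally monotone (strictly decreasing when $\tilde\la<\la<1$), and for each fixed $C$ the relation $H(C,\hat P)=H_i$ determines $\hat P$ uniquely (because $a$ is strictly increasing), you may parametrise both orbits by $C$ on the whole interval and run the same time-shift argument without any matching at the turning point.
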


\begin{proof}
We prove the most difficult case, namely $\la\in (\tilde \la, 1)$, since the case $\la\in(0,\tilde \la)$ is simpler. Suppose $t\in [-T_\ve, \tilde T_\ve]$, with $|t-t_0|\geq \frac \al\ve$, $t_0$ from Lemma \ref{ODE} and $\al>0$ a small number, independent of $\ve$. From \cite[identity (3.2)]{Mu3}, one has
$$
C^{\la_0}(t)(\frac \la{\la_0} -C(t))^{1-\la_0}  =  (\frac \la{\la_0}-1)^{1-\la_0} \frac{a^p(\ve P(t))}{a^p(\ve P(-T_\ve))} = (\frac \la{\la_0}-1)^{1-\la_0} a^p(\ve P(t)) (1+O(\ve^{10}) ). 
%& =&  (\frac \la{\la_0}-1)^{1-\la_0} a^p(\ve P(t)) (1+O(\ve^{10}) ).
$$
Similarly, since $|X_0|$ is small compared with $P(\tilde T_\ve) = P(-T_\ve)$, the functions $(\tilde C(t), \tilde P(t))$ satisfy the identity
$$
\tilde C^{\la_0}(t)(\frac \la{\la_0} - \tilde C(t))^{1-\la_0}  =  c_\infty^{\la_0} (\frac \la{\la_0}- c_\infty)^{1-\la_0} \frac{a^p(\ve \tilde P(t))}{a^p(\ve \tilde P(\tilde T_\ve))}  =   c_\infty^{\la_0} (\frac \la{\la_0}- c_\infty)^{1-\la_0}  a^p(\ve \tilde P(t)) (1+O(\ve^{10})).
$$
Consider the smooth function $C>0 \mapsto f(C) := C^{\la_0} (\frac \la{\la_0} -C)^{1-\la_0} $. Using (\ref{cinfla}), we get
$$
| f(\tilde C(t)) -  f(C(t))| \leq K\ve \  a'(\ve P(t)) |\tilde P(t) -P(t)|  + K \ve^2 |P(t)-\tilde P(t)|^2.
$$
Note that $f(C)$ has nonzero derivative provided $C\neq \la$,\footnote{More specifically, $f'(C) = - (C-\la)C^{\la_0-1}(\frac \la{\la_0} -C)^{-\la_0}.$ }. Since $|C(t) -\la | \geq \kappa \al>0$, $\kappa>0$, uniformly in $\ve$ in the considered time region, we get
$$
|\Delta C(t)| \leq  K(\al) [ \ve  e^{ -\ga \ve |P(t)|}|\Delta P(t)|  +  \ve^2 |\Delta P(t)|^2],
$$
where $\Delta C(t) :=C(t) -\tilde C(t)$ and $\Delta P(t) :=P(t) -\tilde P(t) $. Now we recall that $\Delta C(t) = \Delta P'(t)$. Integrating $[t, \tilde T_\ve]$, with $t\geq t_0 +\frac \al\ve$, we get
$$
|\Delta P(t) | \leq |\Delta P(\tilde T_\ve)| + \int_{t}^{\tilde T_\ve}  K \ve  e^{ -\ga \ve |P(s)|}|\Delta P(s)|ds  + K\ve^2 \int_{t}^{\tilde T_\ve}|\Delta P(s)|^2ds.
$$
By the Gronwall inequality, one has $|\Delta P(t)| \leq K|\Delta P(\tilde T_\ve)|$ and $|\Delta C(t)| \leq K \ve |\Delta P(\tilde T_\ve)|$, as desired. Now we consider the interval $[t_0-\frac \al\ve, t_0+\frac \al\ve]$. From (\ref{c}) and (\ref{ct}),
$$
|\Delta P(t) | \leq |\Delta P(t_0 + \frac \al\ve) | + \int_t^{t_0 +\frac \al\ve} |\Delta C(s)|ds,
$$
and
$$
|\Delta C(t) | \leq |\Delta C(t_0 +\frac \al\ve)| +  K \ve \int_t^{t_0+\frac \al\ve} e^{-\ga\ve|P(s)|} (| \Delta C(s)| + \ve | \Delta P(s)|)ds.
$$
Hence one has $|\Delta C(t) | \leq K|\Delta C(t_0 +\frac \al\ve)| $ and $ |\Delta P(t)| \leq K |\Delta P(t_0+\frac \al\ve)| $. Finally, the  proof in the interval $t\in [-T_\ve,  t_0-\frac \al\ve]$ is similar to the first case. The proof is complete.
\end{proof}

We assume $(\tilde c(t), \tilde \rho(t))$ and $(\tilde C(t), \tilde P(t))$ satisfying (\ref{r1}). The following is the equivalent to Proposition \ref{prop:decomp} (see also \cite{Mu2}):

\begin{prop}\label{prop:decomp1} Let $(\tilde c(t), \tilde \rho(t))$ and $(\tilde C(t),\tilde P(t))$ be satisfying (\ref{r1}). There exists a constant $\ga>0$, independent of $\ve$ small, and an approximate solution $\tilde v$ of the form (\ref{hatv}), such that for all $t\in [-T_\ve, \tilde T_\ve]$, the following properties are satisfied.

\begin{enumerate}
\item The error term  $S[\tilde v]$ satisfies the decomposition
\bea\label{Decompv}
S[\tilde v](t,x) & = &   (\tilde c'(t) - \ve \tilde f_1(t) -\ve^2 \delta_{m,3} \tilde f_3(t))\partial_{\tilde c}\tilde v  \nonu\\
& & +\  (\tilde \rho'(t) - \tilde c(t)+ \la -  \ve \tilde f_2(t) -\ve^2 \delta_{m,3}\tilde f_4(t)) \partial_{\tilde \rho} \tilde v  + \tilde S[\tilde v](t,x). 
\eea

\item The functions $\tilde A_{\tilde c}, \tilde B_{\tilde c}$ are as follows: 
\be\label{tAc}
\tilde A_{\tilde c}, \partial_{\tilde c} \tilde A_{\tilde c} \in L^\infty(\R), \quad \tilde A_{\tilde c}'\in \mathcal Y,  \quad
|\tilde A_{\tilde c}(\tilde y) | \leq K e^{\ga \tilde y} \; \hbox{ as }\; \tilde y\to -\infty, \quad \lim_{+\infty} \tilde A_{\tilde c}  \neq 0,
\ee
and for $m=3$,
\be\label{tBc}
\begin{cases}
B_{\tilde c}'(t,\cdot) \in L^\infty(\R),\quad  |B_{\tilde c}(t, \tilde y) | \leq K e^{\ga \tilde y}e^{-\ve \ga|\rho(t)|} \; \hbox{ as }\; \tilde y\to -\infty, \\
|B_{\tilde c}(t,\tilde y)| +|\partial_{\tilde c} B_{\tilde c} (t,\tilde y)| \leq K|\tilde y| e^{-\ve \ga|\rho(t)|}, \; \hbox{ as } \;\tilde y\to +\infty. %\\
%\displaystyle{\int_\R Q_c(\tilde y) B_c(t,\tilde y) d\tilde y=\int_\R \tilde yQ_c(\tilde y) B_c(t,\tilde y)d\tilde y =0.}
\end{cases}
\ee

\item\label{MA} The function $\tilde \eta_\ve \tilde w(t)$, with $\tilde w(t)$ defined in (\ref{tww}), satisfies similar estimates as in (\ref{H1})-(\ref{AO}).

\item In addition, $\tilde f_1 (t)=f_1(\tilde c(t), \tilde \rho(t))$, given by (\ref{f1}),
\be\label{tf2}
\tilde f_2(t) =-f_2(\tilde c(t), \tilde \rho(t)),  \quad \tilde f_3(t) = -f_3(\tilde c(t), \tilde \rho(t)),
\ee
and $\tilde f_4(t)$ satisfies a similar decomposition as (\ref{f4}).

\item Finally, $\tilde S[\tilde v] (t,\cdot )$ describes similar estimates as in (\ref{SH2}), (\ref{SIn}) and (\ref{SIn3}).
\end{enumerate}
\end{prop}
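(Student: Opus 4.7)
The plan is to reproduce, mutatis mutandis, the construction of Proposition \ref{prop:decomp} (see \cite{Mu3}) in the time-reversed setting. Inserting the ansatz (\ref{hatv})--(\ref{tww}) into the residual (\ref{2.2bis}) and expanding in powers of $\ve$ yields, at order $\ve^0$, the soliton equation (satisfied by $\tilde R = \tilde a^{-1}(\ve\tilde\rho)Q_{\tilde c}(\tilde y)$ up to the scaling factor carried by $\tilde a^{-1}$), and at orders $\ve$ and $\ve^2$ linear ODEs in $\tilde y$ for the correction functions $\tilde A_{\tilde c}$ and, when $m=3$, $\tilde B_{\tilde c}$. Regrouping the terms proportional to $\tilde c'$ and $\tilde\rho'$ produces the modulation decomposition (\ref{Decompv}); the parameters $\tilde f_1, \tilde f_2, \tilde f_3, \tilde f_4$ are then chosen to enforce solvability at each order.

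The key point is the orientation of the cutoff. Because $\tilde\eta_\ve(\tilde y) = \eta(2-\ve\tilde y)$ equals $1$ for $\tilde y \leq \ve^{-1}$ and vanishes for $\tilde y \geq 3\ve^{-1}$, the relevant correction must be selected among the solutions of $\mathcal L_{\tilde c}\tilde A_{\tilde c} = (\cdots)$ that decay as $\tilde y\to-\infty$, which is the opposite choice to that made in \cite{Mu3}. Imposing the resulting solvability conditions against the zero modes of $\mathcal L_{\tilde c}$ (written for the primitive form of the equation, as in \cite{Mu3}) gives $\tilde f_1 = f_1(\tilde c,\tilde\rho)$ and, after the sign change produced by swapping the primitive from $-\int_{\tilde y}^{+\infty}$ to $\int_{-\infty}^{\tilde y}$, the relation $\tilde f_2 = -f_2(\tilde c,\tilde\rho)$. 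For $m=3$ the leading contribution vanishes (as $f_2\equiv 0$ by parity) and the same analysis is carried out one order higher, producing $\tilde f_3 = -f_3(\tilde c,\tilde\rho)$ by the same sign mechanism, and $\tilde f_4$ of the form (\ref{f4}). The qualitative properties (\ref{tAc})--(\ref{tBc}) follow from the exponential decay of $Q_{\tilde c}$ and its derivatives, together with the regularity theory of $\mathcal L_{\tilde c}$ on weighted spaces.

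The main obstacle is controlling the remainder $\tilde S[\tilde v]$ on the strip $\{\ve^{-1}\leq\tilde y\leq 3\ve^{-1}\}$ where $\tilde\eta_\ve$ transitions, since there $\tilde A_{\tilde c}$ approaches its nonzero limit and does not decay. The rescue is that every such term carries a prefactor $\tilde d(t) = (a'/\tilde a^m)(\ve\tilde\rho)$, which is bounded by $Ke^{-\ve\gamma|\tilde\rho(t)|}$ thanks to (\ref{ahyp}); the commutators between $\tilde\eta_\ve$ and the spatial derivatives then contribute terms of size $\ve^{3/2}e^{-\ve\gamma|\tilde\rho(t)|}$, consistent with (\ref{SH2}). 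The $H^1$ and orthogonality bounds on $\tilde\eta_\ve\tilde w$ claimed in item (3) follow from the decay of $\tilde A_{\tilde c}, \tilde B_{\tilde c}$ at $-\infty$ combined with the same $\tilde d(t)$ prefactor, while the sharper integral estimates analogous to (\ref{SIn})--(\ref{SIn3}) are produced by the very solvability conditions used to fix the $\tilde f_j$, which cancel the leading inner products of $\tilde S[\tilde v]$ against $Q_{\tilde c}$ and $\tilde y Q_{\tilde c}$. The most delicate check is the sign of $\tilde f_2$ and $\tilde f_3$, since reversing the primitive direction must be tracked through every integration by parts; this is precisely the asymmetry that will power the contradiction argument in the proof of Theorem \ref{MTL}.
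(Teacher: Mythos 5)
Your proposal is correct and follows essentially the same route as the paper's own (sketch) proof: repeat the construction of Proposition \ref{prop:decomp} from \cite{Mu3} with the cutoff $\tilde\eta_\ve$ reversed, select the branch of the linear ODE for $\tilde A_{\tilde c}$ that decays as $\tilde y\to-\infty$, and read off the sign reversal in $\tilde f_2$, $\tilde f_3$ from the resulting solvability conditions, with the remainder controlled by the $\tilde d(t)\lesssim e^{-\ve\ga|\tilde\rho(t)|}$ prefactor on the transition strip. Two small imprecisions worth noting: the swap goes from $\int_{-\infty}^{y}$ (for $A_c$) to $\int_{\tilde y}^{+\infty}$ (for $\tilde A_{\tilde c}$), not the direction you wrote; and for $m=3$ the sign of $\tilde f_3$ is not produced solely by the primitive swap but also by the now-nonvanishing boundary term $\tilde A_{\tilde c}\,\mathcal L\tilde A_{\tilde c}\big|_{-\infty}^{+\infty}=\tilde c\,\tilde A_{\tilde c}^2(+\infty)$, a genuinely different piece of the computation which the paper's Step 6 makes explicit.
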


\begin{rem}
Let us emphasize the main differences between Propositions \ref{prop:decomp} and \ref{prop:decomp1}. Contrary to (\ref{Ac}) and (\ref{Bc}), we impose the {\bf opposite behavior} in (\ref{tAc})-(\ref{tBc}). This last condition is mainly motivated by the fact that the solution $v(t)$ is now pure as $t\to +\infty$, therefore it should be rapidly decaying on the {\bf left} hand side of the soliton, instead of the right one.   As a consequence, we get that the values of $\tilde f_2(t)$ and $\tilde f_3(t)$ are of opposite sign (cf. (\ref{tf2}).) 
\end{rem}

\begin{proof}[Sketch of proof of Proposition \ref{prop:decomp1}]
We follow step by step the proof of Proposition 4.2 in \cite{Mu3} (see also Appendix B in \cite{Mu3}), having in mind the following formal changes:
\bee
& & (C(t),P(t))  \mapsto  (\tilde C(t), \tilde P(t)),  \quad  (c(t),\rho(t))  \mapsto  (\tilde c(t),\tilde \rho(t)), \quad \tilde u(t) \mapsto  \tilde v(t),\\
& &   (f_1(t),f_2(t),f_3(t),f_4(t)) \mapsto   (\tilde f_1(t), \tilde f_2(t), \tilde f_3(t),\tilde f_4(t)).
\eee

\noindent
{\bf Steps 0, 1, 2, 3 and 4.} In these paragraphs, no significant modifications are needed. Let us recall that $F_1$ is given by
\be\label{F11}
F_1 =  \frac{\tilde f_1(t)}{\tilde a(\ve \tilde \rho)} \Lambda Q_{\tilde c} +\frac{a'}{\tilde a^m}(\ve \tilde \rho) \big[ (\tilde yQ_{\tilde c}^m)_{\tilde y}- \frac 1{m-1} (\tilde c-\la)Q_{\tilde c}  \big] -  \frac{\tilde f_2(t)}{\tilde a(\ve \tilde\rho)} Q_{\tilde c}' ,
\ee
in particular, $\tilde f_1(t) =f_1(\tilde c(t),\tilde \rho(t))$. The term $F_2$ remains ``unchanged''.

\smallskip

\noindent
{\bf Step 5. Resolution of the first linear problem.} We are looking for a function $\tilde A_{\tilde c}$ with the opposite behavior with respect to $A_c$ (cf. (\ref{tAc})). The key difference will be in the computation of $\tilde f_2(t)$. Indeed, we start from (B.29) in \cite{Mu3}. We have (for the sake of clarity, we drop the variable $t$ and the tilde on each function, if there is no confusion)  
$$
\int_\R (\mathcal L A_c)_y \int^{+\infty}_{y} \Lambda Q_c = \int_\R (\tilde F_1 +\la F_2) \int^{+\infty}_{y} \Lambda Q_c , \qquad \mathcal L := -\partial_{yy} +c -mQ_{c}^{m-1}.
$$
and therefore, using that we have chosen $\int_\R Q_c A_c =0$ \cite{Mu3} and $\Lambda Q_{c} :=\partial_c Q_c,$
$$
 \int_\R (\mathcal L A_c)_y \int^{+\infty}_{y} \Lambda Q_c = (\mathcal L A_c) \int^{+\infty}_{y} \Lambda Q_c \Big|_{-\infty}^{+\infty}   + \int_\R \Lambda Q_c \mathcal L A_c  = -\int_\R Q_c A_c =0.
$$
Note that, in this case, we have used that $(\mathcal L A_c) \int^{+\infty}_{y} \Lambda Q_c \Big|_{-\infty}^{+\infty} =0$. Therefore, from (\ref{F11}) and \cite[B.20]{Mu3}, 
$$
f_2 \int_\R Q_c \Lambda Q_c = \frac{a'}{a}\int_\R \Big[ p  c (c -  \frac \la{\la_0 } ) \Lambda Q_c   - \frac 1{m-1} (c-\la)Q_c +(yQ_c^m)' \Big]\int^{+\infty}_y \Lambda Q_c.
$$
Using that  $\int^{+\infty}_y \Lambda Q_c = -\int_0^y \Lambda Q_c + \int_0^{+\infty} \Lambda Q_c  =  -\int_0^y \Lambda Q_c + \frac 12\int_\R \Lambda Q_c, $
one has
$$
\theta  f_2 c^{2\theta -1} \int_\R Q^2   =  \frac{a'}{2a} \Big[  p  c (c -  \frac \la{\la_0 } ) \int_\R \Lambda Q_c   - \frac 1{m-1} (c-\la) \int_\R  Q_c \Big]\int_\R \Lambda Q_c ,
$$
and therefore
$$
f_2(t) = -\frac{(3-m)}{(5-m)^2}(3\la_0 c -\la)\frac{a'(\ve \rho)}{\sqrt{c}a(\ve \rho)} \frac{(\int_\R Q)^2}{\int_\R Q^2},
$$
as desired (cf. (\ref{tf2})).

\noindent
{\bf Step 6. Cubic case. resolution of a second linear system.} As above, the main difference here is in the value of $\tilde f_3(t)$, which is the ``opposite'' of $f_3(t)$. This result is consequence of  (\ref{tBc}).

We start from the equivalent of \cite[B.35]{Mu3} in our case. The first big difference is in (B.36). Now we have
$$
\int_\R A_c \mathcal L A_c'   =    A_c \mathcal L A_c \Big|_{-\infty}^{+\infty} -\int_\R A_c (  \tilde F_1 + \la \hat F_1) 
 =  cA_c^2(+\infty)- \int_\R A_c (  \tilde F_1 + \la \hat F_1),
$$
and therefore,
$$
3 \int_\R Q_cQ_c' A_c^2 = \frac 12 cA_{c}^2(+\infty) -\int_\R  A_c (  \tilde F_1 + \la \hat F_1).
$$
In the same way, $\mu_c  = \frac 12 cA_{c}^2(+\infty)  -\int_\R A_c Q_c^3.$
Still following the proof of (B.35), we have that
\bee
\mu_c &= & \frac 12 cA_{c}^2(+\infty)  +\frac 12 \int_\R \mathcal L A_c Q_c = \frac 12 cA_{c}^2(+\infty)  + \frac 12 \int_\R Q_c\int_{-\infty}^y (\tilde F_1 + \la \hat F_1) \\
& =& \frac 12 cA_{c}^2(+\infty) + \frac 14 \int_\R Q_c \int_\R (\tilde F_1 + \la \hat F_1).
\eee
Since $A_c(+\infty) = \frac 1c\int_\R (\tilde F_1 + \la \hat F_1)   = -\frac 1{2c}(c-\la)\int_\R Q_c,$
we finally get $\mu_c = -\frac\la{8c} (c-\la)(\int_\R Q)^2.$
Therefore, $\tilde f_3(t) = -f_3(\tilde c(t), \tilde \rho(t)).$

\smallskip

\noindent
{\bf Step 7. Final conclusion.} No differences, apart from the obvious ones, are present in this paragraph. The sketch of proof of Proposition \ref{prop:decomp1} is now complete.
\end{proof}

In the following lines, we state without proof the equivalent of Proposition \ref{prop:I} for the solution $v(t)$.

\begin{prop}\label{T0v} Suppose $0< \la <1$, $\la\neq \tilde \la$. There exists a constant $\ve_0>0$ such that the following holds for any $0<\ve <\ve_0$. There are a constant $K>0$ independent of $\ve$ and unique $C^1$ functions $\tilde c, \tilde \rho : [-T_\ve, \tilde T_\ve] \to \R$ such that, for all $t\in [-T_\ve, \tilde T_\ve]$, the function $\tilde z(t,x) := v(t) - \tilde v(t \  ; \tilde c(t), \tilde \rho(t))$
satisfies
\be\label{INT41v}
\|\tilde z(t) \|_{H^1(\R)} \leq K \ve^{1/2}, \quad \int_\R \tilde z(t,x) Q_{\tilde c}(\tilde y)dx = \int_\R \tilde yQ_{\tilde c}(\tilde y) \tilde z(t,x)dx =0.
\ee
\end{prop}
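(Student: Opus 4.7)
The plan is to run the time-reversed analogue of the modulation-plus-bootstrap argument used to establish Proposition \ref{prop:I}, initializing now at the \emph{right} endpoint $t=\tilde T_\ve$ (where $v(t)$ is almost a pure soliton by Proposition \ref{Existv}) and integrating \emph{backward} to $-T_\ve$. First, at $t=\tilde T_\ve$ one has from (\ref{mmTep}) the bound
$$
\|v(\tilde T_\ve) - \kappa(\la) Q_{c^+}(\cdot -(c^+-\la)\tilde T_\ve -x_0)\|_{H^1(\R)} \leq K\ve^{10}.
$$
A standard implicit function theorem argument in a tube around the soliton manifold (in the spirit of Lemma 2 of \cite{MMnon}) then produces unique parameters $\tilde c(\tilde T_\ve), \tilde \rho(\tilde T_\ve)$ near $c^+\sim c_\infty(\la)$ and $(c^+-\la)\tilde T_\ve+x_0$ respectively, such that the two orthogonality conditions in (\ref{INT41v}) hold with an initial error $\|\tilde z(\tilde T_\ve)\|_{H^1(\R)}\leq K\ve^{10}$. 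Defining $X_0:=\tilde \rho(\tilde T_\ve)-P(\tilde T_\ve)$, the hypothesis of Lemma \ref{Finn} is satisfied so that $(\tilde C(t),\tilde P(t))$ from (\ref{ct}) is a good skeleton trajectory.

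Next, extend $(\tilde c(t),\tilde \rho(t))$ to the left by enforcing the two orthogonality conditions in (\ref{INT41v}) for all $t$; differentiating them in time and using equations (\ref{aKdV}) for $v(t)$ and (\ref{Decompv}) for $\tilde v(t)$, one gets a $2\times 2$ ODE system for $(\tilde c'(t),\tilde \rho'(t))$ whose matrix is uniformly invertible provided $\|\tilde z(t)\|_{H^1}$ stays small; this yields $C^1$ functions on a maximal interval, together with analogues of (\ref{rho1})--(\ref{c1}) for the modulation defects $\tilde c_1'(t) := \tilde c'(t)-\ve \tilde f_1(t)-\ve^2\delta_{m,3}\tilde f_3(t)$ and $\tilde \rho_1'(t):=\tilde \rho'(t)-\tilde c(t)+\la-\ve \tilde f_2(t)-\ve^2\delta_{m,3}\tilde f_4(t)$. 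Then I run a backward bootstrap: assume $\|\tilde z(t)\|_{H^1(\R)}\leq K_0 \ve^{1/2}$ on $[t^*,\tilde T_\ve]$ for some $t^*\geq -T_\ve$ and improve this to $\tfrac12 K_0\ve^{1/2}$. The main ingredients are (i) an almost-conservation identity for the Lyapunov functional
$$
\mathcal F[\tilde z](t) := \int_\R \big(\tilde z_x^2 +\la \tilde z^2 - \tfrac{2}{m+1}a(\ve x)[(\tilde v+\tilde z)^{m+1}-\tilde v^{m+1}-(m+1)\tilde v^m \tilde z]\big),
$$
whose time derivative can be written, using (\ref{aKdV}), (\ref{Decompv}) and (\ref{tf2})--(\ref{f4}), as $O(\ve)\|\tilde z\|_{H^1}^2+O(\ve^{3/2})\|\tilde z\|_{H^1}e^{-\ga\ve|\tilde\rho(t)|}$ plus boundary terms at $t=\tilde T_\ve$ of size $O(\ve^{20})$; (ii) coercivity of $\mathcal F$ under the two orthogonality conditions of (\ref{INT41v}), i.e.\ $\mathcal F[\tilde z](t)\gtrsim \|\tilde z(t)\|_{H^1(\R)}^2$, which is the classical Weinstein estimate combined with the scaling direction being controlled via the orthogonality against $Q_{\tilde c}$; and (iii) a backward virial estimate as in Lemma \ref{VL} applied with $\psi_{A_0}$ replaced by its reflection $\psi_{A_0}(-\tilde y)$ (so that the sign works out in backward time), giving $\int_{t}^{\tilde T_\ve}\int_\R(\tilde z_x^2+\tilde z^2)e^{-|\tilde y|/A_0}\leq K\ve^{3/2-1/100}$. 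Integrating the energy identity backward from $\tilde T_\ve$ to $t$ and combining with (ii) and (iii) yields $\|\tilde z(t)\|_{H^1}^2\leq K\ve$, closing the bootstrap for $K_0$ large enough; as a byproduct one obtains the analogue of (\ref{todo}), that is $|\tilde c(t)-\tilde C(t)|+|\tilde \rho'(t)-\tilde P'(t)|\leq K\ve^{1/2}$, which in particular verifies a posteriori the hypothesis (\ref{r1}) needed to apply Proposition \ref{prop:decomp1}.

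The main obstacle is a compatibility loop: Proposition \ref{prop:decomp1} requires the a priori assumption (\ref{r1}) on $(\tilde c,\tilde \rho)$, while the bootstrap establishing (\ref{INT41v}) uses the error decomposition (\ref{Decompv}) and the estimates from that proposition. I would resolve this exactly as in \cite{Mu3} by working on the maximal subinterval $[t^*,\tilde T_\ve]\subset [-T_\ve,\tilde T_\ve]$ where both (\ref{r1}) and the $H^1$-bootstrap hold, deriving strict improvements of both bounds, and concluding by continuity that $t^*=-T_\ve$. A secondary technical point to be careful with is the reversal of the cutoff $\tilde \eta_\ve$ compared to $\eta_\ve$: the sign of the transport term generated by $\partial_t \tilde \eta_\ve$ in the equation for $\tilde z$ is opposite to the one in \cite{Mu3}, but because we integrate backward in time this sign still contributes with the favorable orientation needed to absorb it into the virial estimate, exactly mirroring the role played by (\ref{hM})--(\ref{hM3}) in the proof of Proposition \ref{Existv}.
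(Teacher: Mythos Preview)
Your proposal is correct and follows exactly the approach the paper intends: the paper states Proposition \ref{T0v} \emph{without proof}, explicitly calling it ``the equivalent of Proposition \ref{prop:I} for the solution $v(t)$,'' and immediately lists as consequences the analogues (\ref{trho1})--(\ref{CTT}) and the backward virial Lemma \ref{VLtil}, which are precisely the ingredients you outline (implicit-function modulation at $t=\tilde T_\ve$, backward bootstrap via a Weinstein functional, and a reversed-weight virial). Two cosmetic remarks: the coercive functional should carry $\tilde c(t)\tilde z^2$ rather than $\la\tilde z^2$ (compare the functional $\mathcal F$ used later in the proof of Lemma \ref{BaSta}), and the backward virial weight in the paper is $(1-\psi_{A_0})(\tilde y)$ rather than the literal reflection $\psi_{A_0}(-\tilde y)$, but these play the same role and do not affect your argument.
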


From the proof of this result one can obtain several additional properties, as in Proposition \ref{prop:I}. We recall some of them, of importance in the following lines. First of all, $\tilde z(t)$ satisfies the gKdV equation
\be\label{tEqz1}
  \tilde z_t +  \big\{ \tilde z_{xx}  -\la \tilde z +  a(\ve x) [ (\tilde v + \tilde z)^m - \tilde v^m ] \big\}_x  +  \tilde S[\tilde v]     + \tilde c_1'(t) \partial_{\tilde c} \tilde v  + \tilde \rho_1'(t) \partial_{\tilde \rho} \tilde v =0, 
\ee
with
$\tilde c_1' := \tilde c'-\ve \tilde f_1-\ve^2 \delta_{m,3} \tilde f_3$, and $\tilde \rho_1' := \tilde\rho' - \tilde c+\la- \ve \tilde f_2-\ve^2 \delta_{m,3} \tilde f_4.$ Second, there exists $\ga>0$ such that, for every $t\in [-T_\ve, \tilde T_\ve]$,
\be\label{trho1}
 |\tilde \rho_1'(t) |   \leq    K (m-3 +\ve e^{-\ga\ve|\tilde \rho(t)|} ) \Big[\int_\R \tilde z^2(t) e^{- \ga |\tilde y|} \Big]^{1/2}  +  K\int_\R e^{- \ga |\tilde y|}\tilde z^2(t)   +K \abs{\int_\R \tilde y Q_{\tilde c} \tilde S[\tilde v]},
\ee
\be\label{tc1}
|\tilde c_1'(t) |   \leq  K  \int_\R e^{-\ga |\tilde y|} \tilde z^2(t)  +  K \ve e^{-\ga\ve|\tilde \rho(t)| } \Big[ \int_\R e^{-\ga  |\tilde y|} \tilde z^2(t)\Big]^{1/2}  + K\abs{\int_\R Q_{\tilde c} \tilde S[\tilde v]},
\ee
and 
\be\label{allT}
|\tilde c(t)- \tilde C(t)| + |\tilde \rho'(t) -\tilde P'(t)| \leq K\ve^{1/2},
\ee
\be\label{CTT}
|\tilde c(\tilde T_\ve) - c^+| + | \tilde \rho(\tilde T_\ve) - \tilde P( \tilde T_\ve) | + \|z(\tilde T_\ve)\|_{H^1(\R)} \leq K\ve^{10},
\ee
with $K>0$ independent  of $\ve$. This information allows us to prove a Virial identity for $\tilde z$, as in Lemma  \ref{VL} (see \cite{Mu3} for the proof).

\begin{lem}\label{VLtil} There exist $K, A_0, \delta_0>0$  such that for all $t\in [-T_\ve, \tilde T_\ve]$ and for some $\ga =\ga(A_0)>0$,
\be\label{dereta2}
 \partial_t \int_\R  \tilde z^2(t,x) (1-\psi_{A_0})(\tilde y)   \geq   \delta_0  \int_\R ( \tilde z_x^2 + \tilde z^2 )(t,x) e^{-\frac 1{A_0} | \tilde y|}  - KA_0 \ve^{5/2}.
\ee
\end{lem}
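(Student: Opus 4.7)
The plan is to mirror the proof of Lemma \ref{VL} (cf.~Lemma 6.4 in \cite{Mu2} and Lemma 2.8 in \cite{Mu3}) in the backward setting, by exploiting the built-in symmetry between the two approximate solutions $\tilde u$ and $\tilde v$ constructed in Sections \ref{sec:2} and \ref{4}. The weight $\psi_{A_0}(y)$ used in the forward Virial is increasing and localized on the right of the soliton; the natural substitute in the backward problem is a smooth, positive, decreasing weight in $\tilde y$, played here by $(1-\psi_{A_0})(\tilde y)$ up to an additive constant that does not affect the time derivative. The cutoffs $\tilde \eta_\ve$ and $\eta_\ve$ are oppositely oriented (see the remark following (\ref{tww})), and by Proposition \ref{prop:decomp1} the structure of the equation (\ref{tEqz1}) for $\tilde z$ is precisely the mirror image of (\ref{Eqz1}) for $z$, with $\tilde f_2, \tilde f_3$ carrying opposite signs (cf.~(\ref{tf2})).

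Concretely, the first step is to compute $\partial_t \int_\R \tilde z^2 (1-\psi_{A_0})(\tilde y)\,dx$ by inserting (\ref{tEqz1}) and integrating by parts. Since $1-\psi_{A_0}$ differs from $-\psi_{A_0}$ by a constant, the dispersive term produces $+3\int \tilde z_x^2\, \psi_{A_0}'(\tilde y) - \tfrac 12\int \tilde z^2\, \psi_{A_0}'''(\tilde y)$; by the choice of $\phi$ in (\ref{psip}) the cubic derivative is pointwise bounded by $A_0^{-2}\psi_{A_0}'$, so the first term dominates. The transport of the moving frame contributes $(\tilde\rho'(t)-\la)\int \tilde z^2 \psi_{A_0}'(\tilde y)$; since $\tilde\rho'(t)-\la=\tilde c(t)-\la+O(\ve)$ stays uniformly bounded below away from a small neighborhood of the turning point $t_0$ of Lemma \ref{ODE} (which is handled by a local compactness argument as in \cite{Mu3}), the sum of these contributions produces, after using the coercivity of the linearized operator $\mathcal L=-\partial_{\tilde y\tilde y}+\tilde c-mQ_{\tilde c}^{m-1}$ on the subspace determined by the orthogonality conditions (\ref{INT41v}), the principal lower bound $\delta_0 \int(\tilde z_x^2+\tilde z^2)e^{-|\tilde y|/A_0}$.

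The remaining contributions split into two groups. The nonlinear coupling with $\tilde v-\tilde R$ and the approximate-solution error $\tilde S[\tilde v]$ are bounded by $KA_0\ve^{5/2}$ using (\ref{SH2}), the equivalent of (\ref{H1}) from item (3) of Proposition \ref{prop:decomp1}, and the smallness $\|\tilde z\|_{H^1}\leq K\ve^{1/2}$ from (\ref{INT41v}). The modulation terms $\tilde c_1'\partial_{\tilde c}\tilde v$ and $\tilde\rho_1'\partial_{\tilde \rho}\tilde v$ are spatially localized near the soliton, so (\ref{trho1})--(\ref{tc1}) together with a Cauchy--Schwarz inequality let us absorb them into the coercive term at the cost of a further $O(A_0\ve^{5/2})$ error. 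The arithmetic is essentially the one carried out in \cite{Mu3}, only transported through the mirrored weight.

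The main obstacle, and the only genuinely new bookkeeping, is a careful tracking of signs: the reversed orientation of $\tilde\eta_\ve$ and of the asymptotics of $\tilde A_{\tilde c}, \tilde B_{\tilde c}$ recorded in (\ref{tAc})--(\ref{tBc}), combined with the fact that now the weight is concentrated on the left of the soliton, must be threaded through every integration by parts so as to guarantee that the principal term appears with the sign opposite to that of Lemma \ref{VL}. Once this sign is confirmed, the choice of $A_0$ sufficiently large (independently of $\ve$) makes $\delta_0>0$ independent of $\ve$, and the inequality (\ref{dereta2}) follows exactly as in the forward case.
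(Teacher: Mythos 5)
Your overall strategy --- transporting the proof of Lemma \ref{VL} through the reflection symmetry, with the decreasing weight $(1-\psi_{A_0})(\tilde y)$ in place of the increasing one and the reversed orientation of $\tilde\eta_\ve$, $\tilde A_{\tilde c}$, $\tilde B_{\tilde c}$ --- is exactly what the paper intends (it states the lemma without proof, deferring to the forward case and to \cite{Mu3}). There is, however, one concrete sign error in your sketch that creates a spurious difficulty. After the integrations by parts, the coefficient of $\int_\R \tilde z^2\,\psi_{A_0}'(\tilde y)$ produced by the motion of the frame \emph{together with} the linear term $-\la\tilde z$ in (\ref{tEqz1}) is $\tilde\rho'(t)+\la$, not $\tilde\rho'(t)-\la$: the transport gives $-\tilde\rho'\int\tilde z^2(1-\psi_{A_0})'=+\tilde\rho'\int\tilde z^2\psi_{A_0}'$, and the $-\la\tilde z$ term contributes an additional $+\la\int\tilde z^2\psi_{A_0}'$ (this is precisely the point of (\ref{rhopp}) in the proof of Lemma \ref{ExpDec}). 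Since $\tilde\rho'+\la=\tilde c(t)+O(\ve)$ and $\tilde c(t)$ is bounded below by a positive constant uniformly in $t$ and $\ve$ (Lemma \ref{ODE} and (\ref{allT})), the principal term is coercive for \emph{all} $t\in[-T_\ve,\tilde T_\ve]$: there is no degeneracy at the turning point $t_0$ where $\tilde C(t_0)=\la$, and the ``local compactness argument'' you invoke there is both unnecessary and, as stated, not an argument --- if the coefficient really could change sign on a time interval of length $O(\ve^{-1})$, nothing soft would restore the coercive term there.

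Two further caveats. First, the additive constant in $1-\psi_{A_0}$ is not harmless for the time derivative: $\partial_t\int_\R\tilde z^2$ does not vanish (the mass of $\tilde z$ is not conserved), and the constant part of the weight generates the localized, non--sign-definite term $-m\int_\R\tilde z^2(a\tilde v^{m-1})_x$ as well as pairings with $\tilde S[\tilde v]$ and the modulation terms; these are of the same order as the coercive term and must be incorporated into the Virial quadratic form, just as the full weight $\psi_{A_0}$ (large constant plus linear near $\tilde y=0$) already is in the forward case. Second, the coercivity needed is that of the localized Virial form of \cite{MMnon,Mu2} under the orthogonality conditions (\ref{INT41v}), not of $\mathcal L$ itself; that the mirrored form is still coercive does follow from the $(t,x)\mapsto(-t,-x)$ invariance of gKdV and the evenness of $Q$, i.e., from the symmetry you correctly identify at the outset. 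With the sign of the transport term corrected (which removes, rather than adds, a difficulty), the rest of your outline matches the intended proof.
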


As in (\ref{intc1}), this last property leads to the estimate
\be\label{dereta3}
\int_{t}^{\tilde T_\ve} \int_\R ( \tilde z_x^2 + \tilde z^2 )(s,x) e^{-\frac 1{A_0} | \tilde y|}dxds  + \int_{t}^{\tilde T_\ve} |\tilde c_1'(s)|ds \leq K\ve^{3/2-1/100} ,%+K\|\tilde z(\tilde T_\ve)\|_{L^2(\R)}^2,
\ee
for $t\in [-T_\ve, \tilde T_\ve]$, and where we have used that $1-\psi_{A_0}>0$ and (\ref{CTT}).

\medskip

\section{Backward stability}\label{5}

Let $\delta>0$ a small number, to be chosen below. In this section we will assume that, for $T\geq T_\ve$ large enough, one has 
\be\label{SmaaD}
\|u(T) - v(T) \|_{H^1(\R)} \leq  K\nu \ve^{1 +\delta},
\ee
with $\nu$ a small number, to be specified below, and $K$ a fixed constant. We claim that this smallness condition is preserved for all time below $T$, in particular along the time interval $[-T_\ve, \tilde T_\ve]$.

\begin{lem}\label{BaSta} Suppose $\la\in(0,1)$, $\la\neq \tilde \la$, and $\delta>0$ small. There exist $K>0$ and a smooth function $\mathcal T(t) \in \R$, defined for all $t\in [-T_\ve,T]$, such that
\be\label{Smaa0}
\|u(t+\mathcal T(t)) - v(t) \|_{H^1(\R)} + |\mathcal T'(t)| \leq  K \nu \ve^{1 +\delta}.
\ee
\end{lem}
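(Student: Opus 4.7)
The plan is a backwards-in-time bootstrap argument that exploits the autonomy of equation (\ref{aKdV}) in $t$: although the potential $a(\ve x)$ breaks $x$-translation invariance, $t$-translations remain a symmetry of the flow, so $u(t+\tau,x)$ also solves (\ref{aKdV}) for every fixed $\tau$. This forces the use of a time shift, rather than the more familiar spatial shift, as the modulation parameter.

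Concretely, for a $C^1$ function $\mathcal{T}(t)$ to be chosen, set $U(t,x) := u(t+\mathcal{T}(t),x)$ and $w(t) := U(t) - v(t)$. The chain rule shows that $U$ satisfies (\ref{aKdV}) modulo a forcing $\mathcal{T}'(t)\, u_t(t+\mathcal{T}(t),\cdot)$, so $w$ solves a linearised equation around $v(t)$ with source of size $|\mathcal{T}'(t)|\cdot\|v_t\|_{H^1}$. I would fix $\mathcal{T}(t)$ by the orthogonality condition
\[
\int_\R w(t,x)\, \partial_x \tilde v(t,x)\, dx = 0,
\]
where $\tilde v$ is the approximate solution from Proposition \ref{prop:decomp1}. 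The transversality needed for the implicit function theorem follows from $\int \partial_t \tilde v \cdot \partial_x \tilde v \sim -(c_\infty-\la)\kappa(\la)^2\|Q_{c_\infty}'\|_{L^2}^2\neq 0$, valid because $\la\neq\tilde\la$; this defines $\mathcal{T}(t)$ uniquely for small $\|w\|_{H^1}$ and yields a closed ODE with $|\mathcal{T}'(t)| \leq K\|w(t)\|_{H^1} + K\ve^{3/2}$.

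The core of the proof is an energy estimate under the bootstrap hypothesis $\|w(t)\|_{H^1}\leq K^* \nu\ve^{1+\delta}$ on a sub-interval $[t^*, T]$. I would employ a Weinstein-type Lyapunov functional adapted to $\tilde v(t)$, essentially
\[
F(t):=\frac 12\int_\R \bigl(w_x^2 + \la w^2 - m a(\ve x)\tilde v^{m-1} w^2\bigr)(t,x)\,dx,
\]
possibly augmented by a localised mass correction to neutralise the $Q_{\tilde c}$-direction that the single orthogonality condition above does not eliminate. Coercivity of $F$ on the orthogonal subspace, together with the smallness $\|\tilde v - \kappa(\la)Q_{c_\infty}(\cdot -\tilde\rho)\|_{H^1}\lesssim \ve^{1/2}$, gives $F(t)\gtrsim \|w(t)\|_{H^1}^2$ up to a scaling-direction term, which one controls either through the monotonicity (\ref{hM3}) of the modified mass $\hat M[u]$ or by importing the scaling smallness directly from (\ref{SmaaD}). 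The time derivative $F'(t)$ is dominated by $\tilde S[\tilde v]$-type residuals (size $\ve^{3/2}$ in $H^1$ by (\ref{SH2})), cubic terms in $w$, and $\mathcal{T}'(t)$-contributions. Integrating backwards from $T$, applying (\ref{SmaaD}) at the endpoint, and noting $|T-t|\leq 2T_\ve\lesssim \ve^{-1-1/100}$, a Gronwall closure improves the bootstrap constant provided $\delta = 1/50$, which comfortably beats the polynomial losses. The estimate on $|\mathcal{T}'(t)|$ is then immediate from the modulation ODE.

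The main obstacle is the combination of the absence of spatial symmetry in (\ref{aKdV}) with the very long time window $O(\ve^{-1-1/100})$ over which the smallness must be propagated: every $\ve$-power must be tracked, and the coercivity of the linearised operator must be localised around the moving soliton of $v(t)$, whose parameters $(\tilde c(t), \tilde \rho(t))$ themselves drift under the slowly varying potential. A secondary difficulty is that only one modulation parameter is available (there is no natural second modulation compatible with the time-shift symmetry of the autonomous flow), so the $\Lambda Q_{\tilde c}$-direction must be handled via the monotonicity of $\hat M[u]$ or via the smallness at $t=T$ rather than by additional orthogonality. The $\delta = 1/50$ slack built into Theorem \ref{MTL} is precisely what is needed to absorb these $\ve$-polynomial losses in the Gronwall step and close the bootstrap.
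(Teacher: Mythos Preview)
Your overall strategy---time-shift modulation (the correct choice given the broken $x$-invariance), a coercive Weinstein functional around $v(t)$, and a backward bootstrap---matches the paper's. However there is a genuine gap. You write ``noting $|T-t|\leq 2T_\ve\lesssim \ve^{-1-1/100}$'', but this is false: the time $T$ in (\ref{SmaaD}) is taken \emph{arbitrarily large} (cf.\ the contradiction setup in Section~\ref{7}), so the asymptotic interval $[\tilde T_\ve, T]$ over which you must first propagate smallness is unbounded in terms of $\ve$, and no Gronwall closure can work there. The paper handles this by splitting the proof into two steps with \emph{different} mechanisms. In Step~1 (the interval $[\tilde T_\ve, T]$) it uses the \emph{exact} conservation of the energy $E_a$ together with the monotonicity/almost-monotonicity (\ref{MonoM}) of the modified mass $\tilde M$; combining these as in (\ref{Bal0})--(\ref{Bal1}) yields control of $\|h(t)\|_{H^1}$ and of the bad direction $\int vh$ that does not deteriorate with $T-t$. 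Only in Step~2, on the \emph{bounded} interaction interval $[-T_\ve,\tilde T_\ve]$, does the paper switch to the slowly varying Weinstein functional $\mathcal F(t)$ that you describe, and there your rough accounting is essentially correct.

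A second issue you do not acknowledge: in the reflection regime $\tilde\la<\la<1$ there is a time $t_0\in(-T_\ve,\tilde T_\ve)$ with $\tilde C(t_0)=\la$, so the soliton velocity $\tilde c(t)-\la$ passes through zero. The sharp control of the scaling direction $\int vh$ via the energy identity (the analogue of (\ref{Bal1}) inside the interaction region, see (\ref{Bal2})) carries a factor $(\tilde c(t)-\la)^{-1}$ and therefore degenerates near $t_0$. The paper deals with this by further sub-splitting $[-T_\ve,\tilde T_\ve]$ into three pieces around $t_0$ and using a direct mass-balance estimate (\ref{BMass})--(\ref{Bal3}) on the short middle piece $[t_0-\al/\ve,\, t_0+\al/\ve]$, where the length $\al/\ve$ compensates for the loss of the $(\tilde c-\la)^{-1}$ gain. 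Your single-functional, uniform treatment does not see this degeneracy and would not close in the reflection case.
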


\begin{rem}
Let us emphasize that the modulation via the function $\mathcal T(t)$ is in part  consequence of the fact that \emph{there is no space invariance} for the equation (\ref{aKdV0}), and therefore modulation in space is not enough, in particular inside the interaction region. This idea has been previously introduced in \cite{Mu2}. 
\end{rem}

\begin{proof}[Proof of Lemma \ref{BaSta}]
We sketch the proof of this result, since it is similar to the proof of Proposition 2.5 in \cite{Mu3}, and Proposition 5.1 in \cite{Mu2}. We proceed in two steps.

\smallskip

\noindent
{\bf First step: From $t=T$ to $t=\tilde T_\ve$.}  We claim that for all $t\in [\tilde T_\ve, T]$, there exists $\mathcal T(t)\in \R$ such that
$$
\|u(t + \mathcal T(t) ) - v(t) \|_{H^1(\R)}  + |\mathcal T'(t)|\leq  K \nu \ve^{1+\delta},
$$
with $K>0$ independent of $\ve, \nu$ and $t$. Indeed, we define, for $K^*>0$ to be fixed later,
\bea\label{Tetoile}
T^* & := & \inf\{ t\in [\tilde T_\ve, T] \ \hbox{such that, for all} \ t'\in [t,  T],\ \hbox{there exists a smooth} \nonu \\
&  & \qquad  \tilde T(t) \in \R \hbox{ satisfying} \ \|u(t+\tilde T(t)) -v(t) \|_{H^1(\R)} \leq K^* \nu \ve^{1+\delta}    \}.
\eea
We suppose that $T^* >\tilde T_\ve$. We define, via the implicit function theorem, functions $\mathcal T(t)$ and $h(t)$, such that $h(t) := u(t +\mathcal T(t)) - v(t) $ satisfies, for all $t\in [T^*,T]$,
\be\label{HH}
\int_\R h(t,x)v_x(t,x) dx=0.
\ee
In addition, one has $\|h(t)\|_{H^1(\R)} + |\mathcal T'(t)|   \leq KK^* \nu \ve^{1+\delta}$, for some positive constant $K$. Additionally, this estimate at time $t=T$ does not depend on $K^*$. Define a mass $\tilde M[u](t)$ as follows:
$$
\tilde M[u](t) :=
\begin{cases}
\hat M[u](t) & c_\infty(\la)<\la, \quad (\hbox{cf. } (\ref{hM})),  \\
\mathcal M[u](t)   & c_\infty(\la)>\la, \quad (\hbox{cf. } (\ref{Mback})).  
\end{cases}
$$
Note that this quantity satisfies, for all $t\in [T^*, T]$,
\be\label{MonoM}
\begin{cases} 
\tilde M[u](t) -\tilde M[u](T) \geq 0,  & c_\infty(\la)<\la, \quad \hbox{(cf. (\ref{hM})-(\ref{hM3}))}, \\
\tilde M[u](t) -\tilde M[u](T) \leq K e^{-\ga\ve t}, & c_\infty(\la)> \la, \quad \hbox{(cf. Lemma A.2 \cite{Mu3})}.
\end{cases}
\ee
Moreover, this result does not vary if we consider instead the translated mass $\tilde M[u(\cdot +\mathcal T(\cdot))](t)$.

On the one hand, since $\tilde M[v](t) = \kappa_{m}(\la) M[Q] + O(e^{-\ve\ga t})$, with $\kappa_{m}(\la)$ a positive constant (recall that $v(t)$ is a \emph{pure} soliton solution at $+\infty$), and $E_a[v](t) =E_a[v](T)$, one has
$$
| E_a[v](T) - E_a[v](t) + (c_\infty(\la) - \la)(\tilde M[v](T) - \tilde M[v](t) ) | \leq K e^{-\ve\ga t}. 
$$
On the other hand, from the decomposition $u(t+\mathcal T(t)) = v(t) + h(t) $, we get
\be\label{Bal0}
E_a[u(\cdot +\mathcal T(\cdot )) ](t)  + (c_\infty(\la) - \la) \tilde M[u(\cdot +\mathcal T(\cdot )) ](t)   = E_a[v](t)  + (c_\infty(\la) - \la) \tilde M[v](t)  + \tilde F(t),
\ee
with $\tilde F(t)$ a coercive Weinstein functional in $h(t)$ (see e.g. \cite[Lemma 2.2]{Mu3}), up to a negative direction represented by  $v(t)$. This direction can be controlled using the energy conservation law for $E_a[u](t)$, as is done in \cite[Lemma 5.4]{Mu2}. Indeed, note that 
\begin{align}\label{Bal1}
E_a[u](T) & =  E_a[u(\cdot +\mathcal T(\cdot))](t)  \ =   E_a[v](t) - \int_\R h (v_{xx} -\la v +a(\ve x) v^m) + O(\|h(t)\|_{H^1(\R)}^2)\nonu \\
& = E_a[v](T) - (c_\infty -\la)\int_\R v h(t)  - \int_\R h (v_{xx} -c_\infty v +a(\ve x) v^m) + O(\|h(t)\|_{H^1(\R)}^2). 
\end{align}
Since $c_\infty(\la)\neq \la$ for all $\la\neq \tilde \la$, we have
$$
\abs{\int_\R vh(t) -\int_\R vh(T) } \leq K(\la)  K^*\nu \ve^{1 +\delta}  ( e^{-\ga \ve t} + K^*\nu \ve^{1+\delta}).
$$
We evaluate (\ref{Bal0}) at $t=T$ and $t=T^*$, and use this last estimate. From the coercivity of $\tilde F(t) $ up to the direction $v(t)$ we get, for $\ve_0>0$ small enough,
$$
\|h(T^*)\|_{H^1(\R)}^2 \leq K(\la) K^* \nu^2\ve^{2(1+\delta)} +  K e^{-\ga \ve t} \leq \frac 12(K^*)^2 \nu^2\ve^{2(1+\delta)},
$$
for $K^*$ large, independent of $\ve$ and $\nu$, which is a contradiction to the definition of $T^*$. This proves the first step of the proof.

\smallskip

\noindent
{\bf Final step.} We prove the result inside the interval $[-T_\ve, \tilde T_\ve]$.  The proof is similar to the above case, but in this opportunity we start from the initial estimate 
$$
\|u(\tilde T_\ve + \hat T_\ve ) - v(\tilde T_\ve) \|_{H^1(\R)}  \leq  K \nu \ve^{1+\delta},\qquad \hat T_\ve := \mathcal T(\tilde T_\ve).
$$
Note that $u(t + \hat T_\ve )$ is also a solution of (\ref{aKdV0}), with same energy and the same pure asymptotics as $t\to -\infty$. Therefore, in what follows we can assume by simplicity that $\hat T_\ve =0$. We define (\ref{Tetoile}) in the same way, but now we work inside the interval $[-T_\ve, \tilde T_\ve]$. In a similar fashion, we define $h(t)$ and $\mathcal T(t)$, as in (\ref{HH}). However, the energy-mass argument above considered is not valid anymore, since the mass variation is too large; we need a different approach.

In order to savage our proof, we follow the argument of \cite[Proposition 5.1]{Mu2}. We consider the Weinstein functional
$$
\mathcal F(t) := \frac 12 \int_\R (h_x^2 + \tilde c(t) h^2) -\frac 1{m+1}\int_\R a(\ve x) [ (v+h)^{m+1} -v^{m+1} -(m+1)v^mh ],  
$$ 
with $\tilde c(t)$ being the scaling of the approximate solution $\tilde v(t)$, close to $v(t)$. This quantity $\mathcal F(t)$ varies slowly, as shows a direct computation, similar to \cite[Lemma 5.6]{Mu2}. In particular, the variation of $\tilde c(t)$ can be controlled using (\ref{dereta3}). The last step is a sharp control of the quantity 
\be\label{vh}
\int_\R vh,
\ee
better than the standard Cauchy-Schwarz inequality. This can be done using a similar argument as above (see also \cite[Lemma 5.4]{Mu2}), as long as $\tilde c(t) \neq \la$. This is certainly true in the case $0<\la<\tilde \la$, since $\tilde C(t)>\la$ (Lemma \ref{ODE}) and $|\tilde c(t) - \tilde C(t)| \leq K \ve^{1/2}$ for all $t\in [-T_\ve, \tilde T_\ve]$. Therefore, in the case $0<\la<\tilde \la$, we are done.

In order to control this quantity in the case $\tilde \la<\la<1$, we use the following argument. Suppose $t\geq T^* > t_0 +\frac \al\ve$, for $\al>0$ small and $t_0$ such that $\tilde C(t_0)=\la$. It is clear that one can control \eqref{vh} inside the interval $[T^*, \tilde T_\ve]$.  Indeed, following (\ref{Bal1}), one has
\be\label{Bal2}
\abs{\int_\R v h (t) -\int_\R vh(\tilde T_\ve)} \leq \frac K\al K^* \nu \ve^{1+\delta}  \big\{  \ve e^{-\ve\ga |\tilde\rho(t)|} + K^* \nu \ve^{1+\delta}\big\}, 
\ee
which improves the standard estimate, provided we take $\ve$ small, depending on $K^*$ and $\al$.  As a conclusion, $T^* \leq t_0 + \frac \al\ve$.

Now we suppose $T^* > t_0 - \frac \al\ve$, and we consider the control of \eqref{vh} inside the interval $[T^*,  t_0+ \frac \al\ve] $. Moreover, we may suppose $T^*<t_0$, which is the most difficult case, since $\tilde C(t_0)=\la$ and the usual estimate degenerates. However, since the interval is \emph{small}, one can use a standard balance of mass.  One has, for $t\in [T^*,  t_0+ \frac \al\ve],$
\be\label{BMass}
M[u(\cdot + \mathcal T(\cdot))](t) - M[v](t) = \frac 12 \int_\R [(v+h)^2(t) -v^2(t) ]= \int_\R vh(t) +\frac 12 \int_\R h^2(t).
\ee
On the other hand, since $u$ and $v$ are solutions of (\ref{aKdV0}), one has from (\ref{Ma}),
$$
 \partial_t \big\{ M[u(\cdot + \mathcal T(\cdot)](t) - M[v](t)  \big\} =  \frac{\ve}{m+1} \int_\R a' ((v+h)^{m+1} -v^{m+1})   + \frac{\ve \mathcal T'(t)}{m+1} \int_\R a' (v+h)^{m+1}.
$$
Therefore, after integration in $[t,t_0 +\frac \al\ve$], and using (\ref{BMass}), we get
\be\label{Bal3}
\abs{ \int_\R vh(t) -  \int_\R vh(t_0+\frac \al\ve) } \leq K K^* \nu \ve^{1+\delta}  ( \al  + K^* \nu \ve^{1+\delta}),
\ee
which improves the standard estimate, for $\al>0$ small enough, depending on $K^*$ (take e.g.  $\al = (K^*)^{-1/100})$. Therefore, $T^* \leq t_0 -\frac \al\ve$.

The final estimate inside the interval $[-T_\ve, t_0-\frac \al\ve]$ is completely analogous to (\ref{Bal2}). One has, for $t \in [T^*, t_0-\frac \al\ve]$,
\be\label{Bal4}
\abs{\int_\R v h (t) -\int_\R vh(t_0-\frac \al\ve)} \leq \frac K\al K^* \nu \ve^{1+\delta}  (  \ve e^{-\ve\ga |\tilde\rho(t)|} + K^* \nu \ve^{1+\delta}).
\ee
Combining estimates (\ref{Bal2})-(\ref{Bal4}), taking $K^*$ large and $\ve$ small, depending on $K^*$, we obtain a contradiction. The proof is complete.
\end{proof}

We recall that from Propositions \ref{prop:decomp1} and \ref{T0v}, there exists a suitable approximate solution $\tilde v(t)= \tilde v(t\ ; \tilde c(t), \tilde \rho(t)) $, defined for $t\in [-T_\ve, \tilde T_\ve]$, of the form (\ref{hatv}), with dynamical parameters $\tilde c(t)$  and $\tilde \rho(t)$.% such that $\tilde z(t) = v(t)-\tilde v(t)$ satisfies $\|\tilde z(t)\|_{H^1(\R)} \leq K\ve^{1/2}$.

The purpose in what follows is to use the smallness condition (\ref{Smaa}) to obtain upper bounds on the variation of parameters  $(c,\rho)$ and $(\tilde c,\tilde\rho)$. Define, for $t\in [-T_\ve, \tilde T_\ve]$, the following quantities:
\be\label{DcDp}
t_+ := t+ \mathcal T(t), \quad  \Delta c(t) := c(t_+ ) - \tilde c(t), \quad\hbox{ and } \quad \Delta \rho(t) :=  \rho(t _+ )  -  \tilde \rho(t).
\ee
We have supposed that $|\mathcal T(t)|\leq  \frac {T_\ve}{100} $ in the interval $[-T_\ve, \tilde T_\ve]$, in such a way that we still can use the decompositions of Propositions \ref{prop:I} and \ref{T0v}. Later we will improve this result.  The next result states that under the condition (\ref{Smaa}) the quantities $\Delta c(t)$ and $\Delta \rho(t)$ are also small, meaning that  almost equal solutions have close dynamical parameters.

\begin{lem}\label{6p2} Assume $|\mathcal T(t)|\leq \frac {T_\ve}{100}$ in the interval $[-T_\ve, \tilde T_\ve]$. There exists $K,\ve_0>0$ such that for all $0<\ve<\ve_0$ the following holds. Suppose that (\ref{SmaaD}) is satisfied inside the interval $[-T_\ve, \tilde T_\ve]$. Then, for all $t\in [-T_\ve, \tilde T_\ve]$,
\be\label{small}
|\Delta c(t) | + |\Delta \rho(t)| \leq K\nu \ve^{1+\delta} + K\ve^{10}.
\ee
\end{lem}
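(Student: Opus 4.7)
The plan is to project the difference $h(t) := u(t + \mathcal T(t)) - v(t)$, whose $H^1$-norm is controlled by $K\nu\ve^{1+\delta}$ thanks to Lemma~\ref{BaSta}, onto carefully chosen test functions in order to extract the parameter differences $\Delta c(t)$ and $\Delta\rho(t)$ from a non-degenerate linear system. Subtracting the modulated decompositions of Propositions~\ref{prop:I} and~\ref{T0v}, one has
$$
h(t) = \big[\tilde u(t_+; c(t_+), \rho(t_+)) - \tilde v(t; \tilde c(t), \tilde \rho(t))\big] + z(t_+) - \tilde z(t),
$$
and the natural test functions are $Q_c(y)$ and $y Q_c(y)$ with $y = x - \rho(t_+)$ and $c = c(t_+)$.

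The $z(t_+)$-contribution vanishes by the exact orthogonality~(\ref{INT41}). For $\tilde z(t)$, the orthogonalities~(\ref{INT41v}) are against $Q_{\tilde c}(\tilde y)$ and $\tilde y Q_{\tilde c}(\tilde y)$; writing
$$
\int_\R \tilde z(t)\,Q_c(y)\,dx = \int_\R \tilde z(t)\,[Q_c(y) - Q_{\tilde c}(\tilde y)]\,dx,
$$
Taylor-expanding the bracket in $(\Delta c, \Delta\rho)$ and using $\|\tilde z(t)\|_{L^2(\R)} \leq K\ve^{1/2}$ yields the error $K(|\Delta c|+|\Delta\rho|)\ve^{1/2}$, and the same for the $yQ_c$-projection.

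For the deterministic piece $\tilde u - \tilde v$, I would decompose $\tilde u = \eta_\ve(R+w)$, $\tilde v = \tilde\eta_\ve(\tilde R+\tilde w)$ following~(\ref{defv})--(\ref{hatv}). The cutoff factors differ from $1$ only where $Q_c(y)$ is already exponentially small of order $O(\ve^{10})$, so they are harmless. The $w, \tilde w$ contributions are controlled by~(\ref{AO}) and its analog in item~(\ref{MA}) of Proposition~\ref{prop:decomp1}: although $w - \tilde w$ is pointwise of size $O(\ve)$, its $Q_c$- and $yQ_c$-moments reduce to $O(\ve^{10}) + O(\ve(|\Delta c|+|\Delta\rho|))$, the last term coming purely from shifting parameters. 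Taylor-expanding the main modulated-soliton difference around $(\tilde c, \tilde\rho)$ produces
$$
\int_\R [\tilde u - \tilde v]\,Q_c\,dx = \tilde a^{-1}(\ve\rho(t_+))\,\Delta c \int_\R \Lambda Q_c\cdot Q_c\,dy + \mathcal R_1,
$$
$$
\int_\R [\tilde u - \tilde v]\,y Q_c\,dx = \tfrac{1}{2}\,\tilde a^{-1}(\ve\rho(t_+))\,\Delta\rho\,\|Q_c\|_{L^2(\R)}^2 + \mathcal R_2,
$$
with $|\mathcal R_j| \leq K\ve(|\Delta c|+|\Delta\rho|) + K(|\Delta c|+|\Delta\rho|)^2 + K\ve^{10}$. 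The leading coefficients are both nonzero since $\tilde a^{-1}\geq 1/2$, $\int_\R \Lambda Q_c\cdot Q_c = \tfrac{1}{2}\partial_c\|Q_c\|_{L^2}^2 > 0$ by $L^2$-subcriticality of $m\in\{2,3,4\}$, and $\int_\R Q_c'\cdot yQ_c = -\tfrac{1}{2}\|Q_c\|_{L^2}^2$ by integration by parts.

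Combining with $|\int h Q_c| + |\int h yQ_c| \leq K\nu\ve^{1+\delta}$ produces
$$
|\Delta c(t)| + |\Delta\rho(t)| \leq K\nu\ve^{1+\delta} + K\ve^{1/2}(|\Delta c|+|\Delta\rho|) + K(|\Delta c|+|\Delta\rho|)^2 + K\ve^{10}.
$$
The crude a priori bound $|\Delta c(t)| + |\Delta\rho(t)| \leq K\ve^{1/2}$---which follows from Propositions~\ref{prop:I} and~\ref{T0v} together with the fact that $\|u(t_+)-v(t)\|_{H^1(\R)} \ll 1$ forces the two approximating solitons to be centered close (modulo the shift $|\mathcal T(t)|\leq T_\ve/100$)---then allows absorption of the last two correction terms for $\ve_0$ small, yielding~(\ref{small}). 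The main obstacle is precisely the $w-\tilde w$ mismatch: since $A_c$ and $\tilde A_{\tilde c}$ have \emph{opposite} decays at infinity (cf.~(\ref{Ac}) vs.~(\ref{tAc})), their pointwise difference is only $O(\ve)$ and would destroy the estimate at face value; what saves the argument is that only $Q_c$- and $yQ_c$-moments enter the projected identities, and these are exponentially small by~(\ref{AO}) and its analog in item~(\ref{MA}).
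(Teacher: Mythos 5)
Your proof is correct and follows essentially the same strategy as the paper's: project the difference $h = u(t_+) - v(t)$ against the two modulated test functions, use one set of orthogonality relations to kill the corresponding residual exactly, Taylor-expand the difference of modulated solitons, control the correction terms $w,\tilde w$ via~(\ref{AO}) and its analogue, and close by absorbing quadratic terms with the a~priori smallness of $|\Delta c|+|\Delta\rho|$. The only cosmetic difference is that you center the test functions at $(c(t_+),\rho(t_+))$, which annihilates the $z(t_+)$-piece exactly and forces you to Taylor-shift the $\tilde z(t)$-piece, whereas the paper centers at $(\tilde c(t),\tilde\rho(t))$ and does the mirror image (annihilating $\tilde z(t)$ and shifting $z(t_+)$, cf.~(\ref{Diff0})--(\ref{Diff4})). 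Both choices are equivalent. You are also right that a crude a~priori bound on $|\Delta c|+|\Delta\rho|$ is needed to absorb the quadratic remainder --- the paper leaves this implicit, and your sketch (two solitons with $O(\ve^{1/2})$ modulation errors whose difference is $\ll 1$ in $H^1$ must have nearby parameters) is the correct source; one minor inaccuracy is that the $w$-shift error is of size $\|w\|_{H^1}(|\Delta c|+|\Delta\rho|)\sim \ve^{1/2}(|\Delta c|+|\Delta\rho|)$ rather than your stated $\ve(|\Delta c|+|\Delta\rho|)$, but this does not affect the conclusion.
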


\begin{proof}
From Propositions \ref{prop:I} and \ref{T0v} we have the following decomposition 
\be\label{DiffA}
u(t _+,x) -v(t, x)   = \tilde u(t_+,x) -\tilde v(t,x) + z(t_+,x)  - \tilde z(t, x), 
\ee
where, for all $t\in [-T_\ve, \tilde T_\ve],$ $z(t,x)$ and $\tilde z(t,x)$ satisfy (\ref{INT41}) and (\ref{INT41v}), respectively. In addition, after a Taylor expansion, we obtain  
\bea\label{Diff0}
R(t_+ ,x) - \tilde R(t,x)  & = &  \frac{\Delta c(t)}{\tilde a(\ve \tilde \rho(t))} \Lambda Q_{\tilde c(t)}(\tilde y) - \frac{\Delta \rho(t)}{\tilde a(\ve \tilde \rho(t))} Q_{\tilde c(t)}'(\tilde y)  \nonu \\
& & + O_{H^1(\R)} ( \ve e^{-\ve\ga|\tilde \rho(t)|} |\Delta \rho(t)|+ |\Delta c(t)|^2 + |\Delta \rho(t)|^2) .   
\eea
On the other hand, from (\ref{AO}),
\be\label{Diff1}
 \abs{\int_\R \eta_\ve(y) w(t_+,x) Q_{\tilde c}(\tilde y) dx}  \leq  K\ve^{10} +K \|w(t_+)\|_{H^1(\R)} (|\Delta c(t)| + |\Delta \rho(t)| ),
\ee
and from (\ref{AO}) and item \ref{MA} in Proposition \ref{prop:decomp1}, similar estimates hold for $\abs{\int_\R \eta_\ve (y)w(t_+,x) \tilde y Q_{\tilde c}(\tilde y)dx}$, $\abs{\int_\R \tilde \eta_\ve (\tilde y) \tilde w(t,x) Q_{\tilde c}(\tilde y) dx }$, and  $\abs{\int_\R \tilde \eta_\ve (\tilde y) \tilde w(t ,x) \tilde yQ_{\tilde c}(\tilde y)dx}$.
Finally, from (\ref{INT41}) one has
\be\label{Diff4}
\abs{\int_\R  z (t,x)Q_{\tilde c}(\tilde y)dx}  \leq K\|z(t)\|_{H^1(\R)} (|\Delta c(t)| + |\Delta \rho(t)| ),
\ee
and the same result is valid for the integration against $\tilde y Q_{\tilde c} $. Now we conclude. Integrating (\ref{DiffA}) against $Q_{\tilde c}$ and $\tilde yQ_{\tilde c}$, and using (\ref{Smaa0}), (\ref{INT41}), (\ref{INT41v}), and (\ref{Diff0})-(\ref{Diff4}), we finally obtain (\ref{small}).
\end{proof}

\medskip

\section{Propagation of the defect}\label{6}

\smallskip

Now, we suppose $m=2$ or $m=4$. In the following lines, we introduce two quantities, $J(t)$ and $\tilde J(t)$, with small variation in time, and such that the defect clearly appears in the dynamics. Let us define
\bea
\chi_c(t,x) & := & \int_{-\infty}^y  \Lambda Q_c(s) ds, \qquad e(t) := (3\la_0c(t)-\la)\frac{\tilde a(\ve \rho(t))}{2\theta c^{2\theta-1}(t)M[Q]} ,\label{PaPa}\\
\tilde \chi_{\tilde c}(t,x) & := &  \int^{+\infty}_{\tilde y}  \Lambda  Q_{\tilde c}(s) ds, \qquad \tilde e(t):= (3\la_0\tilde c(t)-\la)\frac{\tilde a(\ve \tilde \rho(t))}{2\theta \tilde c^{2\theta-1}(t)M[Q]} \label{MaMa}.
\eea
It is clear that $\chi_c$ remains bounded as $y\to +\infty$, and it is exponentially decreasing as $y\to -\infty$. Similarly, $\tilde \chi_{\tilde c}$ has the opposite behavior as $\tilde y\to \pm\infty$. Finally, let us recall the notation introduced in (\ref{hatv}) and Proposition \ref{T0v}. Consider the functionals
$$
J(t) := e(t)\int_\R \chi_c (t,x) z(t,x) dx, \quad \hbox{ and } \quad \tilde J(t) :=\tilde e(t) \int_\R \tilde{\chi}_{\tilde c} (t,x) \tilde z(t,x) dx.
$$

\begin{lem} The functionals $J(t)$ and $\tilde J(t)$ are well defined for all $t\in [-T_\ve, \tilde T_\ve]$, and they satisfy
\be\label{bJ}
|J(t)|  + |\tilde J(t)| \leq K\ve^{1/4}.
\ee
\end{lem}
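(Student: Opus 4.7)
The plan is to first verify well-definedness and then obtain the $\ve^{1/4}$ bound.

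For well-definedness: the function $\chi_c(t,\cdot)$ is continuous and bounded on $\R$, with $|\chi_c(y)| \leq Ke^{\gamma y}$ as $y \to -\infty$ and $\chi_c(y) \to \chi_\infty := \int_\R \Lambda Q_c$ (a finite constant) as $y \to +\infty$. Writing $\chi_c = (\chi_c - \chi_\infty \phi_0) + \chi_\infty \phi_0$ for a smooth step $\phi_0$ equal to $0$ for $y \leq -1$ and $1$ for $y \geq 1$ expresses $\chi_c$ as the sum of an exponentially decaying function (in $L^1 \cap L^2$) and a bounded step-like function. The first integrates against $z \in L^2$ by Cauchy--Schwarz; the improper integral $\int \phi_0 z$ converges using monotonicity-type control on the right tail, propagated from the smallness $\|z(-T_\ve)\|_{H^1} \leq K\ve^{10}$. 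The same reasoning applies to $\tilde \chi_{\tilde c}$ with left and right interchanged.

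For the quantitative bound, I plan to introduce the scale $L = \ve^{-1/2}$ and decompose $\int_\R \chi_c z$ into the three regions $\{|y| \leq L\}$, $\{y < -L\}$, $\{y > L\}$. On $\{|y| \leq L\}$, a direct Cauchy--Schwarz gives
\begin{equation*}
\left| \int_{|y| \leq L} \chi_c\, z \right| \leq \|\chi_c\|_{L^\infty}\sqrt{2L}\,\|z\|_{L^2} \leq K\ve^{-1/4}\cdot\ve^{1/2} = K\ve^{1/4}.
\end{equation*}
On $\{y < -L\}$, the pointwise bound $|\chi_c(y)| \leq Ke^{-\gamma\ve^{-1/2}}$ is super-polynomially small in $\ve$ and absorbs any crude polynomial upper bound on the $L^1$ norm of $z$ there. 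On $\{y > L\}$, where $\chi_c \approx \chi_\infty$ does not decay, I would exploit the fact that dispersion in gKdV propagates to the left, so $z$ must be essentially absent to the right of the soliton; quantitatively, this requires a complementary monotonicity formula analogous to Lemma~\ref{VL} with a weight increasing as $y \to +\infty$, propagated from the initial smallness $\|z(-T_\ve)\|_{H^1} \leq K\ve^{10}$. Combined with the uniform boundedness of $e(t)$ (since $c(t)$ is bounded away from zero and $a\in(1,2)$), this gives $|J(t)| \leq K\ve^{1/4}$.

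The analysis of $\tilde J(t)$ will be entirely symmetric, exchanging left and right: $\tilde\chi_{\tilde c}$ is bounded on the left and exponentially decaying on the right, while $\tilde z$ is small on the left-hand side of the soliton, now using the backward Virial estimate \eqref{dereta2} together with the initial smallness at $\tilde T_\ve$ guaranteed by Proposition~\ref{Existv} and Proposition~\ref{T0v}.

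The hard part will be the integral on $\{y > L\}$: the standard Virial weight $\psi_{A_0}$ is essentially constant for $y \gg 1$ and does not by itself produce the required decay of $z$ on the right tail. The argument will need a complementary monotonicity identity tailored to the right tail (e.g.\ with a weight like $e^{y/B}$ centered to the right of $\rho(t)$), and controlling the resulting error terms using $\|z(-T_\ve)\|_{H^1} \leq K\ve^{10}$ is the technical heart of the estimate.
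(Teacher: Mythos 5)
Your plan follows essentially the same route as the paper's proof, with one cosmetic difference and one substantive omission. The paper splits at a fixed large $y_0$ (independent of $\ve$) rather than at $L=\ve^{-1/2}$: on $\{y\le y_0\}$ Cauchy--Schwarz with $\|z(t)\|_{H^1(\R)}\le K\ve^{1/2}$ gives $Ky_0\ve^{1/2}$, and the whole factor $\ve^{1/4}$ is produced on the right tail, where the $L^2$ decay $\|z(t,\cdot+\rho(t))\|_{L^2(\cdot\ge y)}\le Ke^{-\ga y}$ furnished by the monotonicity formula is interpolated with $\|z_x\|_{L^2}\le K\ve^{1/2}$ via Gagliardo--Nirenberg to yield the pointwise bound $|z(t,x)|\le K\ve^{1/4}e^{-\ga(x-\rho(t))}$, hence an $L^1$ bound against the merely bounded $\chi_c$. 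In your split the $\ve^{1/4}$ instead comes from the bulk region $|y|\le \ve^{-1/2}$, which works equally well, but on $\{y>L\}$ you still need this interpolation (or some other device): the $L^2$ tail decay by itself does not allow you to integrate $z$ against the non-decaying constant $\chi_\infty$, and this is the step your sketch leaves implicit. The ``complementary monotonicity'' you anticipate is exactly the paper's Lemma \ref{ExpDec}, a weighted mass $\tilde I_{t,y_0}(s)=\frac12\int_\R a^{1/m}(\ve x)u^2\phi(\ell)$ with $\phi(\ell)=\frac2\pi\arctan(e^{\ell/K_0})$ along lines $\ell=x-\rho(t)-\sigma_1(s-t)-y_0$ with $\sigma_1$ strictly below the soliton speed, seeded by the purity of $u(t)$ as $t\to-\infty$ (seeding at $-T_\ve$ with $\|z(-T_\ve)\|_{H^1}\le K\ve^{10}$ plus the exponential localization of $Q$, as you propose, would also do). One small correction for $\tilde J$: the relevant tool there is the mirror-image \emph{spatial} monotonicity to the left of $\tilde\rho(t)$ seeded at $\tilde T_\ve$ (Remark \ref{cuid}), not the Virial estimate (\ref{dereta2}), which only controls the time integral of $\int_\R\tilde z^2e^{-|\tilde y|/A_0}$ and gives no information on the spatial tails of $\tilde z$ at fixed time.
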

\begin{proof}
We only prove the estimate for $J(t)$, being the estimate for $\tilde J(t)$ similar (see Remark \ref{cuid} below).

Let $y_0>0$ be a large number, independent of $\ve$, to be chosen later.  Note that $\chi_c(y)$, with $y=x-\rho(t)$, is an exponentially decreasing function as $y\to -\infty$. From (\ref{INT41}) and the Cauchy-Schwarz inequality, one has 
$$
\abs{\int_{y\leq y_0} \chi_c(y) z(t,x)dx } \leq K y_0 \ve^{1/2}. 
$$
On the other hand, the region $\{y>y_0\}$ requires more care since $\chi_c$ does not converge to zero as $y\to +\infty$. Let us suppose by now that, for all $t\in [-T_\ve, \tilde T_\ve]$, $z(t,x)$ enjoys the following exponential decay property:
\be\label{zinf}
|z(t,x)| \leq K \ve^{1/4} e^{- \ga (x-\rho(t))}, \quad x\geq  \rho(t)+y_0,
\ee
for some $K,\ga>0$, independent of $\ve$. This implies that 
$$
\abs{\int_{y>y_0} \chi_c (y) z(t,x)dx}  \leq K e^{-\ga y_0} \ve^{1/4}.
$$
These two inequalities imply (\ref{bJ}), since $y_0>0$ does not depend on $\ve$ small.

Note that (\ref{zinf}) is consequence of (\ref{INT41}), the following Gagliardo-Nirenberg inequality
$$
|z(t, y + \rho(t))|  \leq  \|z(t, \cdot +\rho(t))\|_{L^2(\cdot \geq y)}^{1/2}\|z_x(t, \cdot +\rho(t))\|_{L^2(\R)}^{1/2}   \leq  K\ve^{1/4}\|z(t, \cdot+\rho(t))\|_{L^2(\cdot \geq y)}^{1/2},
$$
and provided we prove that, for some $K,\ga>0$, and for all $y\geq y_0$,
\be\label{Oz}
\|z(t,\cdot +\rho(t))\|_{L^2(\cdot \geq y)}^{1/2} \leq Ke^{-\ga y}.
\ee
The proof of this last estimate is a consequence of the following estimate (see  e.g. \cite[Lemma 7.3]{Mu2} for a similar result):

\begin{lem}\label{ExpDec} There exist $K,\ga,y_0>0$, independent of $\ve$, such that for all $t\in [-T_\ve, \tilde T_\ve]$, and for all $y\geq y_0$,
\be\label{Betaa}
\| u(t,\cdot + \rho(t))\|_{L^2(\cdot\geq y)}^2 \leq Ke^{-\ga y}.
\ee
\end{lem}

\begin{proof}
The proof of this result can be divided in two steps.

\smallskip

\noindent
{\bf Step one. Reduction to the case $(-\infty,-T_\ve)$.} From \eqref{rho1}, there exists $\sigma_1,\sigma_2,\sigma_3\in \R$ such that $-\la<\sigma_1<\sigma_2<\sigma_3< \inf_{t\in [-T_\ve, \tilde T_\ve]} \rho'(t)$, independent of $\ve$. Indeed, using (\ref{r1}) and the fact that $C(t)>0$ uniformly in $\ve$,
\be\label{rhopp}
\rho'(t) = c(t) -\la + O(\ve) > 0.9 \inf_{t\in [-T_\ve, \tilde T_\ve]} C(t) -\la > -\la.    
\ee
Then it is clear that we can find such numbers. Suppose $y_0>0$ large, but fixed, to be chosen later, $ s,t\in [-T_\ve, \tilde T_\ve]$, with $s\leq t$. Consider the modified mass
\be\label{tildeI}
\tilde I_{t,y_0}(s) := \frac 12\int_\R a^{1/m}(\ve x) u^2(s,x)\phi(\ell)dx,
\ee
with $\ell= \ell(s,t):= x- (\rho(t) + \sigma_1 (s-t)+ y_0)$ and $\phi(\ell) := \frac 2\pi \arctan (e^{\ell/K_0})$, with $K_0>0$ large to be chosen below. From the definition of $\sigma_3$, we have
\be\label{lessthan}
\rho(t) -\rho(s) -\sigma_3 (t-s) \geq 0. 
\ee
Let us consider now (\ref{tildeI}). We claim that for $y_0>0$ large but arbitrary,
\be\label{Alpha}
\tilde I_{t,y_0}(t)-\tilde I_{t,y_0}(-T_\ve) \leq K e^{-y_0/K}(1 - e^{- (T_\ve + t)/K}).
\ee
Indeed, a direct computation gives 
\bee
 \frac 12\partial_s \int_\R a^{1/m}(\ve x) \phi u^2 
& = & -\frac 32 \int_\R a^{1/m}\phi'  u_x^2   + \frac{m}{m+1} \int_\R a^{1/m +1}(\ve x)\phi' u^{m+1} \\
& &   +\frac 12 \int_\R u^2 a^{1/m}(\ve x) \big[ -(\sigma + \la)\phi '   + \phi^{(3)} \big]  \\
& & -\frac 32\ve \int_\R (a^{1/m})' (\ve x) \phi  u_x^2 - \frac \ve 2 \int_\R u^2 [ \la (a^{1/m})'   - \ve^2 (a^{1/m})^{(3)} ](\ve x) \phi  \\
& & + \frac 32\ve \int_\R u^2 \big[ \ve(a^{1/m})^{(2)}(\ve x) \phi'  +  (a^{1/m})' (\ve x)\phi''   \big].
\eee
In the last computation we have clearly defined  six terms. Let us study in detail  each one. In what follows we use the decomposition $u =\tilde u + z$, given by Proposition \ref{prop:I}.

First of all, one has
$$
 \int_\R \phi'  a^{1/m}u_x^2 =  \int_\R \phi'  a^{1/m}(\tilde u_x^2 + 2\tilde u_x z_x + z_x^2). 
$$
Recall from Proposition \ref{prop:decomp} that $\tilde u(s,x)$ % = \eta_\ve (x-\rho(s))\Big[ [\tilde a(\ve \rho(s))]^{-1}Q_c(x-\rho(s))  +\ve d(s) A_c(x-\rho(s))\Big]$ 
is exponentially decreasing in the region $x \geq \rho(s)$, independent of $\ve$. Moreover, it is zero for $x\leq \rho(s) -\frac 3\ve$. On the other hand, $\phi'$ is exponentially decreasing away from zero. Therefore, one has e.g.
\bee
\abs{ \int_\R a^{1/m} \phi'  \tilde u_x^2} &  \leq & K\int_{\rho(s)-\frac 3\ve}^{\rho(t) + \sigma_2(s-t) + \frac 12 y_0 } e^{\ell/K} dx + K\int_{\rho(t) + \sigma_2(s-t) + \frac 12 y_0 }^{\infty} e^{-\frac 1K (x-\rho(s))}dx \\
 & \leq & K e^{-\frac 1K ((\sigma_2-\sigma_1) (t-s) + \frac 12 y_0)} + K e^{-\frac 1K (\rho(t)-\rho(s) -\sigma_2 (t-s) +\frac 12 y_0)} \\
 & \leq & K e^{-y_0/K} e^{- (t-s)/K} + K e^{-\frac 1K ( (\sigma_3-\sigma_2) (t-s) +\frac 12 y_0)}  \leq \  K e^{-y_0/K} e^{- (t-s)/K},
\eee
for some $K>0$, and where we have used (\ref{lessthan}). The same method con be applied to the term $\int_\R \phi'  \tilde u_x z_x .$ Hence, one has
$$
\int_\R  a^{1/m}\phi'  u_x^2 =  \int_\R a^{1/m}\phi'   z_x^2 + O (e^{-y_0/K} e^{- (t-s)/K}).
$$
Similarly, since $u=\tilde u +z$,
\bee
\abs{ \int_\R a^{1/m+1} \phi' u^{m+1}}  & \leq &   K \abs{  \int_\R \phi' \tilde u^{m+1}}  + K \ve^{(m-1)/2} \int_\R a^{1/m}  \phi'  z^2 \\
& \leq &K e^{-(t-s)/K} e^{-x_0/K} + K \ve^{(m-1)/2} \int_\R a^{1/m} \phi'  z^2.
\eee
On the other hand, since $\sigma+ \la>0,$ taking $K_0>0$ large if necessary,
$$
 \int_\R a^{1/m} u^2 \big[ -(\sigma + \la)\phi'   +\phi^{(3)} \big]  = - \frac 12(\sigma + \la)\int_\R a^{1/m} \phi' z^2 + O(e^{-y_0/K} e^{- (t-s)/K}),
$$
and
$$
  -\frac 32\ve \int_\R (a^{1/m})' (\ve x) \phi  u_x^2 - \frac \ve 2 \int_\R u^2 [ \la (a^{1/m})'   - \ve^2 (a^{1/m})^{(3)} ](\ve x) \phi \leq 0,
$$
provided $\ve$ is small.
Finally,
$$
\abs{\frac 32\ve \int_\R  \big[ \ve(a^{1/m})^{(2)}(\ve x) \phi'  +  (a^{1/m})' (\ve x)\phi''   \big] u^2} \leq K \ve e^{-(t-s)/K} e^{-y_0/K}  + K\ve \int_\R a^{1/m}  \phi' z^2.  
$$
After these estimates, it is easy to conclude that
$$
\frac 12\partial_t \int_\R a^{1/m}(\ve x)\phi(y)u^2 \leq K   e^{-y_0/K} e^{-(t-s)/K}.
$$
Therefore, estimate (\ref{Alpha}) follows after integration in time.

\noindent
{\bf Step two. Estimate in $(-\infty,-T_\ve)$.} Now we perform the same computation as above, but now inside the interval $(-\infty, -T_\ve)$. Indeed, it is not difficult to show that, for $t_0\leq -T_\ve \leq t,$
\be\label{menosTe}
\tilde I_{t,y_0}(-T_\ve) -\tilde I_{t,y_0}(t_0) \leq K e^{-y_0/K} (1 - e^{-( t-t_0)/K} ).
\ee
The final conclusion comes from the fact that  $\lim_{t_0\to -\infty}\tilde I_{t,y_0}(t_0) =0,$
as a consequence of (\ref{Minfty}). Collecting (\ref{Alpha}) and (\ref{menosTe}), we get
$$
\tilde I_{t,y_0}(t) \leq K e^{-y_0/K}.
$$
The proof of (\ref{Betaa}) is complete.
\end{proof}
Let us conclude the proof of (\ref{Oz}). From (\ref{INT41}) and (\ref{Betaa}) one has
$$
\int_{y +\rho(t)}^{+\infty} [ \tilde u^2 + 2  \tilde u z  + z^2](t, x)dx  \leq K e^{-\ga y}.
$$
Now we use the main properties of the decomposition of the function $\tilde u$, sated in Proposition \ref{prop:decomp}. One has, for $x\geq y +\rho(t)$, $y\geq y_0>0$ large,
\be\label{decayyy}
|\tilde u(t,x)| \leq K\abs{Q_c(x-\rho(t)) + \ve d(t) A_c(x-\rho(t)) } \leq K e^{-\ga (x-\rho(t))},
\ee
for some constants $K, \ga>0$, independent of $\ve$. Note that the fact that $A_c$ is exponentially decreasing for $x\geq \rho(t) +y_0$ is essential. Therefore, we finally get (\ref{Oz}):
$$
\int_{y +\rho(t)}^{+\infty} z^2(t, x)dx  \leq K e^{-\ga y}.
$$
\end{proof}
\begin{rem}\label{cuid}
Let us remark that the proof in the case of $\tilde J(t)$ is quite similar, with some basic changes. We need exponential decay of $\tilde z(t,x)$ on the {\bf left} side. Second, instead of $\phi$ one has to consider the function $1-\phi$, supported on the left side of the soliton, and since $\tilde A_{\tilde c} $ is exponentially decreasing for $x<\tilde \rho(t)$, estimate (\ref{decayyy}) holds for $\tilde v(t,x)$ in the region $x\leq y +\tilde \rho(t)$, $y\leq -y_0<0$ large.   
\end{rem}

Since $J(t)$ and $\tilde J(t)$ are well-defined, we can compute and estimate its variation in time.

\begin{lem} The functionals $J(t)$ and $\tilde J(t)$ satisfy, for some constants $K, \ga>0$, and for all $t\in [-T_\ve, \tilde T_\ve]$,
\be\label{dJ}
 \abs{ J'(t)  +   \rho_1'(t)(3\la_0c(t)-\la)  } \leq K \int_\R e^{-\ga\sqrt{c}|y|} z^2(t) + K\ve^{5/4} + K|c_1'(t)|.
\ee
and similarly for $\tilde J'(t)$:
\be\label{dJt}
 \big| \tilde J'(t)  -  \tilde \rho_1'(t)(3\la_0 \tilde c(t)-\la) 
 \big|  \leq K \int_\R e^{-\ga\sqrt{\tilde c}|\tilde y|} \tilde z^2(t) + K\ve^{5/4} + K|\tilde c_1'(t)|.
\ee
\end{lem}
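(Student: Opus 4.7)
The plan is to differentiate $J(t)$ directly using the product rule, substitute the evolution equation (\ref{Eqz1}) for $z_t$, integrate by parts against $\chi_c$ using the identity $(\chi_c)_y = \Lambda Q_c$, and then track the cancellations that isolate the claimed main term $-\rho_1'(t)(3\la_0 c-\la)$. Writing
\[
J'(t) = e'(t)\int_\R \chi_c z + e(t)\!\int_\R \partial_t\chi_c\, z + e(t)\!\int_\R \chi_c\, z_t,
\]
we use $\partial_t\chi_c = -\rho'(t)\Lambda Q_c(y) + c'(t)\!\int_{-\infty}^y \partial_c\Lambda Q_c$, and substitute the equation for $z_t$. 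The factor $e'(t) = O(\ve+|c'(t)|)$ is harmless. The integration-by-parts against the $[\,\cdot\,]_x$ in the $z_t$ equation converts the principal linear operator $z_{xx}-\la z+ma\tilde u^{m-1}z$ into $[(\Lambda Q_c)''-\la\Lambda Q_c+ma\tilde u^{m-1}\Lambda Q_c]\,z$; using the linearized identity $(\Lambda Q_c)''-c\Lambda Q_c+mQ_c^{m-1}\Lambda Q_c = Q_c$ together with $a\tilde u^{m-1}=Q_c^{m-1}+O(\ve)$, this reduces to $(c-\la)\int\Lambda Q_c\,z + \int Q_c\,z + O(\ve)$. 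The orthogonality $\int Q_c z=0$ kills one term, and the piece $(c-\la)\int\Lambda Q_c z$ cancels exactly against the contribution of $-e\rho'(t)\int\Lambda Q_c z$ coming from $\partial_t\chi_c$, once one uses $\rho'=c-\la+\ve f_2+\rho_1'+O(\ve^2)$.

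After these cancellations, the surviving linear-in-$z$ remainders are of the form $\ve\int\Lambda Q_c z$ and $\rho_1'\int\Lambda Q_c z$, which are absorbed into $K\int e^{-\ga\sqrt c|y|}z^2+K\ve^{5/2}$ (using Cauchy--Schwarz with the exponential weight and the bound $\int\Lambda Q_c z=\frac1{2c}\int yQ_c'z$ coming from the orthogonality). The quadratic nonlinear term $\int\Lambda Q_c [a(\tilde u+z)^m-a\tilde u^m-ma\tilde u^{m-1}z]$ and the error $\int\Lambda Q_c[(a-1)\tilde u^{m-1}\Lambda Q_c z]$ are pointwise bounded by $C\,e^{-\ga\sqrt c|y|}z^2$, yielding the first term in (\ref{dJ}). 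The main term is then produced by $-e(t)\rho_1'(t)\int\chi_c\partial_\rho\tilde u$; using $\partial_\rho\tilde u=-\tilde a^{-1}(\ve\rho)Q_c'(y)+O_{H^1}(\ve e^{-\ve\ga|\rho|})$ (cf.\ Proposition \ref{prop:decomp}) and integrating $\int\chi_c Q_c'=-\int\Lambda Q_c\cdot Q_c=-\tfrac12\partial_c\!\int Q_c^2$, the definition (\ref{PaPa}) of $e(t)$ was precisely chosen so that the resulting coefficient equals $(3\la_0 c-\la)$ up to negligible terms.

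For the source term $e(t)\int\chi_c\tilde S[\tilde u]$ one splits $\chi_c(y)=[\chi_c(y)-\chi_c(+\infty)\mathbf 1_{y\ge 0}]+\chi_c(+\infty)\mathbf 1_{y\ge 0}$; the bracketed part decays exponentially on both sides and pairs against $\tilde S[\tilde u]$ via (\ref{SH2}) to give $K\ve^{3/2}$, while the constant-at-$+\infty$ part, being (after reduction) a linear combination of the functionals $\int Q_c\tilde S[\tilde u]$ and $\int yQ_c\tilde S[\tilde u]$, is controlled by (\ref{SIn}), giving a total of $K\ve^2 e^{-\ve\ga|\rho|}+K\ve^3 \lesssim \ve^{5/4}$. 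The $c_1'$-contribution is $-e(t)c_1'(t)\int\chi_c\partial_c\tilde u$, bounded by $K|c_1'(t)|$ since $\int\chi_c\partial_c\tilde u$ is $O(1)$ thanks to the exponential decay of $\partial_c\tilde u$ on the right-hand side. The identity (\ref{dJt}) follows from the same computation applied to $\tilde v,\tilde z$ with Proposition \ref{prop:decomp1}; the sign flip in front of $\tilde\rho_1'$ is a direct consequence of $(\tilde\chi_{\tilde c})_{\tilde y}=-\Lambda Q_{\tilde c}$, which reverses the orientation at every integration by parts.

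The main obstacle I anticipate is the bookkeeping of the $O(\ve)$ contributions: several terms of that size appear through $e'(t)$, through the $\ve f_2$-part of $\rho'(t)$, through the $a\tilde u^{m-1}-Q_c^{m-1}$ discrepancy, and through the higher-order pieces of $\partial_\rho\tilde u$, and one must verify that none of them produces a spurious coefficient multiplying $\rho_1'$ outside the target $(3\la_0 c-\la)$. The second delicate point is the estimate of $\int\chi_c\tilde S[\tilde u]$: since $\chi_c$ does not vanish at $+\infty$, the naive $L^2$ pairing with (\ref{SH2}) is insufficient, and one genuinely needs the moment information encoded in (\ref{SIn}) (which is why the bound $K\ve^{5/4}$, and not better, is the natural one using only the hypotheses of Proposition \ref{prop:decomp}).
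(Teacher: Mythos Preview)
Your plan is essentially the paper's proof: differentiate $J$, insert (\ref{Eqz1}), integrate by parts via $(\chi_c)_y=\Lambda Q_c$, and use the cancellation of $(c-\la)\int\Lambda Q_c z$ against the $-\rho'\int\Lambda Q_c z$ coming from $\partial_t\chi_c$. Your identification of the main term through $e\int\chi_c\,\partial_\rho\tilde u=\rho_1'(3\la_0c-\la)+O(\ve^{3/2})$ is correct, as is the handling of the linear, quadratic and $c_1'\partial_c\tilde u$ contributions.

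There is one genuine gap, in the treatment of $\int\chi_c\,\tilde S[\tilde u]$. After your split, the piece $\chi_c(+\infty)\int_{y\ge 0}\tilde S[\tilde u]$ is \emph{not} a linear combination of $\int Q_c\tilde S[\tilde u]$ and $\int yQ_c\tilde S[\tilde u]$; no such algebraic relation exists, so (\ref{SIn}) is of no help here, and since $\mathbf 1_{y\ge 0}\notin L^2$ you cannot fall back on (\ref{SH2}) either. The paper does not invoke (\ref{SIn}) at all for this term. It uses the explicit pointwise structure of $\tilde S[\tilde u]$ from the construction in \cite{Mu3} (Appendix~B, Step~7): since $\tilde u\equiv 0$ for $y\le -3/\ve$ and $\chi_c$ decays exponentially on the left, one bounds
\[
\Big|\int_\R \chi_c\,\tilde S[\tilde u]\Big|\le K\int_{-3/\ve}^0 e^{\ga y}\,|\tilde S[\tilde u]|+K\int_{y\ge 0}|\tilde S[\tilde u]|\le K\ve^2 e^{-\ve\ga|\rho(t)|}+K\ve^3,
\]
i.e.\ an $L^1$-type bound on $\tilde S[\tilde u]$ restricted to $\{y\ge 0\}$, available from the construction. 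The outcome is in fact $O(\ve^2)$, not merely $O(\ve^{5/4})$.

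A smaller omission: you need to say why $\int\chi_c z$ (and $\int\partial_c\chi_c\, z$) is $O(\ve^{1/4})$. This is exactly the content of the preceding lemma (\ref{bJ}) --- the proof there bounds the integral itself, not just $J$ --- and it is the sole source of the exponent $5/4$ in the final estimate, via $|e'\int\chi_c z|\le K\ve\cdot\ve^{1/4}$ and $|\ve f_1\int\partial_c\chi_c z|+|c_1'\int\partial_c\chi_c z|\le K\ve\cdot\ve^{1/4}$. Without this the bound on the $e'$ and $\partial_c\chi_c$ terms is not justified.
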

\begin{proof}
Let us prove (\ref{dJ}).  We  compute:
\bee
J'(t) & = &  e' \int_\R \chi_c z+ e\int_\R \chi_c  z_t -\rho' e\int_\R \Lambda Q_c z + c' e \int_\R \partial_c \chi_c z \\
& =&  e' \int_\R \chi_c z  + e\int_\R \Lambda Q_c \big\{ z_{xx}  -c z +  a(\ve x) [ (\tilde u +z)^m - \tilde u^m ] \big\}  - \rho'_1e \int_\R \Lambda Q_c z + c_1' e\int_\R \partial_c \chi_c z \\
& &   + \ve f_1e \int_\R \partial_c \chi_c z    - \ve f_2e \int_\R \Lambda Q_c z - c'_1e \int_\R \chi_c  \partial_c \tilde u  -  e\int_\R \chi_c  \tilde S[\tilde u] - \rho_1' e \int_\R  \chi_c \partial_\rho \tilde u. 
\eee
Notice that we have used (\ref{Eqz1}). In the following lines, we estimate each term above. First of all, from (\ref{PaPa}), (\ref{INT41}) and (\ref{c1}),
\bee
|e'(t)| & \leq & K | c'(t)|  + K| a'(\ve\rho) \ve \rho'(t)|   \leq  K\ve e^{-\ga\ve|\rho(t)|} +K |c_1'(t)| \leq K\ve.
\eee
From here, we get $|e' \int_\R \chi_c z | \leq K\ve^{5/4}.$ Using (\ref{INT41}) and the identity $\mathcal L \Lambda Q_c:= -(\Lambda Q_c)''  + c\Lambda Q_c -  mQ_c^{m-1}\Lambda Q_c =-Q_c$, we have
\bee
& &  \abs{e\int_\R \Lambda Q_c \big\{ z_{xx}  -c z +  a(\ve x) [ (\tilde u +z)^m - \tilde u^m ] \big\} }   \leq  \\
& & \quad \leq    \abs{e\int_\R \Lambda Q_c \big\{\mathcal L z   + m [ a(\ve x) \tilde u^{m-1} - Q_c^{m-1}] z\big\} }    +  \abs{e\int_\R \Lambda Q_ca(\ve x) [ (\tilde u +z)^m - \tilde u^m - m\tilde u^{m-1}z]} \\
&  &   \quad \leq K\ve \|z(t)\|_{H^1(\R)} + K\int_\R e^{-\sqrt{c}|y|} z^2(t) \  \leq \ K\ve^{3/2} + K\int_\R e^{-\sqrt{c}|y|} z^2(t).
\eee
On the other hand, from (\ref{rho1}),
\bee
\abs{\rho_1' e \int_\R \Lambda Q_c z} & \leq &  K \int_\R e^{-\ga\sqrt{c}|y|} z^2(t) + K (\ve e^{-\ve\ga|\rho(t)|} +\|z(t)\|_{H^1(\R)}^2 +  \|\tilde S[\tilde u](t)\|_{L^2(\R)} )\|z(t)\|_{H^1(\R)}  \\
& \leq & K \int_\R e^{-\ga\sqrt{c}|y|} z^2(t)   + K\ve^{3/2}.
\eee
Note that $\partial_c \chi_c = \int_{-\infty}^{y} \partial_c \Lambda Q_c $ has a similar asymptotic behavior as $\chi_c$. Therefore, from the first part,
$$
\abs{ \ve f_1e \int_\R \partial_c \chi_c z } + \abs{ c_1' e \int_\R \partial_c \chi_c z} \leq K\ve^{5/4}.
$$
Similarly, $\abs{\ve f_2e \int_\R \Lambda Q_c z} \leq K\ve^{3/2}.$
Let us recall that, from Proposition \ref{prop:decomp}, $\tilde u$ is exponentially decreasing in $y$ as $y\to +\infty$, moreover $\tilde u\equiv 0$ for $y \leq -\frac 3\ve $. Since $\chi_c (y)$ is exponentially decreasing as $y\to -\infty$, one has, for some constant $\ga>0$,
\bee
\abs{c_1' \int_{\R} \chi_c  \partial_c \tilde u} & =& |c_1'|\abs{ \int_{y\geq -\frac 3\ve} \chi_c  (\frac 1{\tilde a} \Lambda Q_c + \ve d(t) \partial_c A_c) } \\
& \leq & K|c_1'| \abs{\int_\R \chi_c \chi_c'} + K \ve |c_1'|  \int_{-\frac 3\ve}^0 e^{\ga y} |\partial_c A_c(y)|dy +  K \ve |c_1'| \int_0^{+\infty} e^{-\ga y} dy  + K\ve^{10} \\
& \leq & K|c_1'|  + K\ve^{10}.
\eee
The term $\int_\R \chi_c  \tilde S[\tilde u] $ can be treated similarly. Indeed, since $\chi_c$ is exponentially decreasing as $y\to -\infty$, one has from Step 7 in Appendix B of \cite{Mu3},
\bee
\abs{\int_\R \chi_c  \tilde S[\tilde u] } & \leq & K \int_{-\frac 3\ve}^0 e^{\ga y}  |\tilde S[\tilde u]| +K \int_{y\geq 0} |\tilde S[\tilde u]|\\
& \leq & K  e^{-\ga/\ve}e^{-\ve \ga |\rho(t)|} + K \ve^2 e^{-\ve \ga |\rho(t)|} + K\ve^3 \ \leq \ K \ve^2 e^{-\ve \ga |\rho(t)|} + K\ve^3 \ \leq  \ K\ve^2.
\eee

\noindent
The last estimate concerns the nonzero term $\rho_1' e \int_\R  \chi_c \partial_\rho \tilde u $. Here one has
\bee
\rho_1' e \int_\R  \chi_c \partial_\rho \tilde u & = &   \frac{\rho_1'e}{\tilde a(\ve \rho)} \int_\R \Lambda Q_c Q_c + \ve \rho_1'  e \partial_\rho d(t) \int_\R \chi_c  A_c(y)  - \ve \rho_1'  e d(t) \int_\R \chi_c  A_c'(y)   \\
& =& \rho_1' (3\la_0c-\la) c^{2\theta}(2\theta M[Q])^{-1} \int_\R \Lambda Q_c Q_c   + O(|\rho_1'| \ve e^{-\ve\ga|\rho(t)|}) \\
& =& \rho_1'  (3\la_0c-\la)    + O(\ve^{3/2}).
\eee
Finally, collecting the above estimates, we get, for some $\ga>0$ independent of $\ve>0$ small,
$$
 \abs{J'(t)+ \rho_1' (3\la_0c-\la) } \leq  K \ve^{5/4} + K\int_\R e^{-\ga\sqrt{c}|y|} z^2(t) + K|c_1'(t)|,
$$
as desired. The proof of (\ref{dJt}) is analogous, the minus sign is a consequence of (\ref{MaMa}).
\end{proof}

\medskip

\section{Proof of Theorem \ref{MTL}, cases $m=2,4$}\label{7}

In this section we prove Theorem \ref{MTL} in the non degenerate cases $m=2$ and $4$. For the sake of clarity, we divide the proof into several steps.

\smallskip

\noindent
{\bf Step 1. Preliminaries.}  We will follow an argument by contradiction. Suppose that (\ref{LBound}) do not hold; therefore for $\nu >0$ arbitrarily small, there is $T>T_\ve$ arbitrarily large such that
\be\label{Tio}
\|w^+(T) \|_{H^1(\R)} \leq \nu \ve^{1+1/50}.
\ee
(cf. Theorem \ref{MTL1} for the definition of $w^+$ and $\rho(t)$). %In other words, one has
Let us define $ x_0 =x_0(T) := \rho(T) - (c^+ -\la) T$. From Proposition \ref{Existv} we know that there exists a unique solution $v= v_{x_0}$ of (\ref{aKdV}) such that (\ref{mlim0}) is satisfied. Moreover, from (\ref{mmTep}), by taking $T$ larger if necessary, one has  
$$
\|u(T) - v(T) \|_{H^1(\R)} \leq  2\nu \ve^{1+1/50}.
$$
Thanks to Lemma \ref{BaSta} with $\delta :=\frac 1{50}$, there are a constant $K>0$ and a smooth function $\mathcal T(t) \in \R$, defined for all $t\in [-T_\ve, \tilde T_\ve]$, such that
\be\label{Smaa}
\|u(t+\mathcal T(t)) - v(t) \|_{H^1(\R)} + |\mathcal T'(t)| \leq  K \nu \ve^{1+1/50}.
\ee

Now we assume that $\mathcal T(t)$ is a small perturbation of $T_\ve$ inside the interval $[-T_\ve, \tilde T_\ve]$, in the sense that 
$$
|\mathcal T(t)|\leq  K^* \ve^{-1/2-1/100}, \quad |\mathcal T(\tilde T_\ve)|\leq  K \ve^{-1/2-1/100},
$$
where $K^*>0$ is a large constant, to be chosen later, and $0<K<K^*$ is independent of $K^*$. Therefore Lemma \ref{6p2} makes sense with no modifications. Moreover, from Propositions \ref{prop:I} and \ref{T0v}, and (\ref{todo}), one has for $X_0 :=( c^+ -\la)\tilde T_\ve +x_0(T) -P(\tilde T_\ve) $,
$$
|X_0| \leq K|P'(\tilde T_\ve) | |\mathcal T(\tilde T_\ve)| +| P(\tilde T_\ve+ \mathcal T(\tilde T_\ve))  -( c^+ -\la)\tilde T_\ve - x_0(T) | \leq K \ve^{-1/2-1/100}.
$$
Note that we can apply  Lemma \ref{Finn}. As a consequence, we improve our previous assumption:

\begin{lem}[Bootstrap]
For all $\ve>0$ small, the function $\mathcal T(t)$ satisfies $|\mathcal T(t)| \leq \frac 12 K^*\ve^{-1/2-1/100} $ in the interval $[-T_\ve, T_\ve]$.
\end{lem}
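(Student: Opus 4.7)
The plan is a direct bootstrap argument that leverages the smallness of $\mathcal{T}'(t)$ coming from Lemma \ref{BaSta} and the a priori control on $\mathcal{T}$ at the endpoint $t = \tilde T_\ve$. The key observation is that the upper bound on $|\mathcal{T}'(t)|$ provided by (\ref{Smaa}) is $K\nu\ve^{1+1/50}$, and this estimate holds throughout $[-T_\ve,\tilde T_\ve]$ as long as the bootstrap hypothesis $|\mathcal{T}(t)|\leq K^*\ve^{-1/2-1/100}$ is in force (this hypothesis is what allows us to invoke Propositions \ref{prop:I} and \ref{T0v} and thus Lemma \ref{BaSta} itself). Crucially, the constant $K$ in $|\mathcal{T}'(t)|\leq K\nu\ve^{1+1/50}$ is \emph{independent} of $K^*$.

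The argument then consists of a single integration from $\tilde T_\ve$ backwards. For any $t\in[-T_\ve,\tilde T_\ve]$, we write
\[
|\mathcal{T}(t)| \;\leq\; |\mathcal{T}(\tilde T_\ve)| + \int_{t}^{\tilde T_\ve}|\mathcal{T}'(s)|\,ds
\;\leq\; K\ve^{-1/2-1/100} + K\nu\ve^{1+1/50}\,(\tilde T_\ve + T_\ve).
\]
From (\ref{Te}) and (\ref{tte}) we have $\tilde T_\ve + T_\ve \leq K(\la)T_\ve \lesssim \ve^{-1-1/100}$, so the integral term is bounded by
\[
K\nu\ve^{1+1/50}\cdot K(\la)\ve^{-1-1/100} \;=\; K(\la)\,\nu\,\ve^{\,1/50-1/100} \;=\; K(\la)\,\nu\,\ve^{1/100},
\]
which is much smaller than $\ve^{-1/2-1/100}$ for $\ve$ small. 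Collecting,
\[
|\mathcal{T}(t)| \;\leq\; K\ve^{-1/2-1/100} + K(\la)\nu\ve^{1/100} \;\leq\; 2K\,\ve^{-1/2-1/100}.
\]
Choosing $K^*\geq 4K$ (note that $K$ above is the constant from Proposition \ref{Existv} and from the definition of $X_0$, hence independent of $K^*$) yields $|\mathcal{T}(t)|\leq \tfrac12 K^*\ve^{-1/2-1/100}$, which is the desired improvement.

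The only subtlety to watch is that the bound on $|\mathcal{T}(\tilde T_\ve)|$ with $K$ independent of $K^*$ genuinely follows from the ingredients stated just above the lemma: (\ref{Smaa}) applied at $t=\tilde T_\ve$ gives the bound on $X_0$ via Lemma \ref{Finn}, and the modulation constants from Propositions \ref{prop:I} and \ref{T0v} are themselves absolute. There is no circularity because these modulation results only require $|\mathcal{T}(t)|\leq T_\ve/100$ (the hypothesis of Lemma \ref{6p2}), a condition that is easily weaker than our bootstrap assumption for $K^*$ chosen appropriately. Thus no genuine obstacle arises: the whole content of the lemma is an integration of an $O(\nu\ve^{1+1/50})$ derivative over an interval of length $O(\ve^{-1-1/100})$, which produces a correction of size $O(\nu\ve^{1/100})$, negligible compared to the initial datum $O(\ve^{-1/2-1/100})$ at $\tilde T_\ve$.
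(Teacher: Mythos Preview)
Your argument is correct and in fact more direct than the paper's. The paper proceeds by bounding $|\rho(t+\mathcal T(t))-\rho(t)|$ through the chain
\[
|\Delta\rho(t)|+|\tilde\rho(t)-\tilde P(t)|+|\tilde P(t)-P(t)|+|P(t)-\rho(t)|\leq K\ve^{-1/2-1/100}
\]
(using Lemma~\ref{6p2}, Lemma~\ref{Finn}, and the integrated versions of (\ref{todo}) and (\ref{allT})), and then exploits the lower bound $|\rho'(t)|\geq\kappa\alpha/2$ away from the degenerate time $t_0$ (where $C(t_0)=\la$) to extract $|\mathcal T(t)|$ via the mean value theorem. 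Near $t_0$, where $\rho'$ may vanish, the paper falls back on exactly the integration of $\mathcal T'$ that you perform. Your observation is that this integration already suffices on the \emph{entire} interval, since $\int_{-T_\ve}^{\tilde T_\ve}|\mathcal T'|\leq K\nu\ve^{1/100}$ is negligible against the endpoint datum $|\mathcal T(\tilde T_\ve)|\leq K\ve^{-1/2-1/100}$; this bypasses Lemma~\ref{6p2}, Lemma~\ref{Finn}, and the whole case splitting.

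One minor inaccuracy: you write that the bootstrap hypothesis on $|\mathcal T|$ is what allows one to invoke Lemma~\ref{BaSta}. In fact Lemma~\ref{BaSta} is proved beforehand and unconditionally (its proof runs its own bootstrap on the $H^1$-difference, not on $|\mathcal T|$), so the bound $|\mathcal T'(t)|\leq K\nu\ve^{1+1/50}$ is already available with $K$ independent of $K^*$. This only strengthens your argument. What the bootstrap hypothesis \emph{is} needed for is to guarantee that $t_+=t+\mathcal T(t)$ stays in the domain where the decomposition of Proposition~\ref{prop:I} applies, so that objects like $c(t_+),\rho(t_+)$ used downstream (Lemma~\ref{6p2}, Step~2) make sense --- but your integration does not actually require those.
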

\begin{proof}
We prove this result in the most difficult case, namely $\tilde \la<\la<1$. The case $0<\la<\tilde \la$ follows easily. Suppose that $t\in [-T_\ve, \tilde T_\ve]$, but $|t-t_0| \geq \frac \al\ve$, with $t_0$ given by Lemma \ref{ODE} and $\al>0$ small, independent of $\ve$. Then, from (\ref{small}), (\ref{todo}), (\ref{allT}) and (\ref{PtP}),
\bee
|\rho(t + \mathcal T(t)) - \rho(t)|  & \leq & |\Delta\rho(t)| +  | \tilde \rho(t) -\tilde P(t)| + |\tilde P(t)- P(t)| + |P(t) -\rho(t)| \\
&  \leq & K\nu\ve^{1+1/50} + K\ve^{-1/2-1/100} + |P(t) -\tilde P(t)| \ \leq \ K\ve^{1/2-1/100}.
\eee
A direct computation shows that, outside the interval $[t_0-\frac \al\ve, t_0+\frac \al\ve]$, one has $|C(t) -\la| > \kappa\al$, for some $\kappa>0$ independent of $\ve$. Therefore, from (\ref{rho1}) and (\ref{todo}),
$$
|\rho'(t)| \geq  |c(t) -\la| -K\ve^{1/2}  \geq  |C(t) -\la| - K\ve^{1/2} \geq \kappa \al/2 .
$$
Finally, from the lower bound $|\rho(t + \mathcal T(t)) - \rho(t)| \geq \frac 12 \kappa\al |\mathcal T(t)|,$ we get $|\mathcal T(t)|\leq K(\al)\ve^{1/2-1/100}$. Now we consider the estimate of $\mathcal T(t)$ inside the interval $[t_0-\frac \al\ve, t_0+\frac \al\ve]$. Since $|\mathcal T(t_0-\frac \al\ve)| \leq K \ve^{-1/2- 1/100}$, integrating (\ref{Smaa}), we get
$$
|\mathcal T(t)| \leq K \nu \ve^{1/100} + K \ve^{-1/2- 1/100} \leq  K \ve^{-1/2- 1/100}.
$$  
By taking $K^*>2K$, we can conclude. We are done.
\end{proof}

We have proved that $|\mathcal T(t)|$ is small, compared with $T_\ve$, in the interaction region. This means that, by performing a suitable translation in  time, we can assume, without loss of generality, that $\mathcal T(-T_\ve) =0$, and the arguments below do not change. %Indeed, it is enough to consider the new function $U(t) := u(t + \mathcal T(-T_\ve)),$ which is also a solution of (\ref{aKdV}), pure as $t\to -\infty$, and satisfying a corresponding bound (\ref{Tio}) as $t\to +\infty$, provided we choose $T$ even larger.  From (\ref{Smaa}) one has, for $\mathcal S(t) := \mathcal T(t) -\mathcal T(-T_\ve)$,
%$$
%\|U(t+\mathcal S(t)) - v(t) \|_{H^1(\R)} + |\mathcal S'(t)| \leq  K \nu \ve^{1+1/50},
%$$
%and $\mathcal S(-T_\ve) =0$, as required. Therefore, in what follows we assume $\mathcal T(-T_\ve) =0.$

\smallskip

\noindent
{\bf Step  2. Integration in time.} Using (\ref{Smaa}) and (\ref{tte}), we get, for $t\in [-T_\ve, \tilde T_\ve]$,
$$
|\mathcal T(t) | \leq \int_{-T_\ve}^t |\mathcal T'(s)|ds \leq K\nu \ve^{1/100},
$$
and thus from (\ref{DcDp}), (\ref{small}),   (\ref{rho1}) and (\ref{c1}),
\be\label{LaLb}
\begin{cases}
|c(t) - \tilde c(t)|  \leq |c(t)-c(t+\mathcal T(t))|+|c(t+\mathcal T(t)) -\tilde c(t)|   \leq  K \nu \ve^{1+1/100}, \\
 |\rho(t) - \tilde \rho(t)| \leq  | \rho(t)- \rho(t+\mathcal T(t))|+| \rho(t+\mathcal T(t)) -\tilde \rho(t)|\leq  K\nu \ve^{1/100}.
\end{cases}
\ee
Now we consider (\ref{dJ}) and (\ref{dJt}). Adding both inequalities, we have
\bea
& &  \abs{ J'(t)  +  \tilde J'(t) +   \rho_1'(t)(3\la_0c(t)-\la) - \tilde \rho_1'(t) (3\la_0\tilde c(t)-\la)    }  \nonu \\ 
& &  \quad \leq K \int_\R (e^{-\ga\sqrt{c}|y|} z^2(t) + e^{-\ga\sqrt{\tilde c}|\tilde y|} \tilde z^2(t)) + K\ve^{5/4}  + K(|c_1'(t)|   + |\tilde c_1'(t)|). \label{Lucho} 
\eea
Now we integrate between $-T_\ve$ and $\tilde T_\ve$. Using the Virial estimates (\ref{dereta}) and (\ref{dereta2}) for $A_0$ large enough, one obtains
$$
\int_{-T_\ve}^{\tilde T_\ve}  \int_\R (e^{-\ga\sqrt{c}|y|} z^2(t) + e^{-\ga\sqrt{\tilde c}|\tilde y|} \tilde z^2(t))dx dt \leq K\ve^{3/2-1/100},
$$
and similarly $\int_{-T_\ve}^{\tilde T_\ve} (|c_1'(t)| +|\tilde c_1'(t)|) dt \leq K\ve^{3/2-1/100}$ (see e.g. (\ref{Intec1})). 
On the other hand, from (\ref{bJ}),
$$
\Big|\int_{-T_\ve}^{\tilde T_\ve}  (J'(t)  + \tilde J'(t))dt\Big| \leq |J(t)| + |J(-T_\ve)| +|\tilde J(t)|+|\tilde J(-T_\ve)|\leq  K\ve^{1/4}.
$$
Hence, from (\ref{Lucho}), (\ref{LaLb}) and (\ref{rho1}),
$$
\Big| \int_{-T_\ve}^{\tilde T_\ve}(\rho_1'(t) - \tilde \rho_1'(t)) (3\la_0c(t)-\la) dt \Big| \leq  K \ve^{1/4- 1/100} + K\nu \ve^{1/2}.%\abs{ \int_{-T_\ve}^{\tilde T_\ve}|c(t) -\tilde c(t)||\tilde \rho_1'(t)|dt}.
$$
In addition, using (\ref{LaLb}),
$$
\Big| \int_{-T_\ve}^{\tilde T_\ve} (\rho_1'(t) - \tilde \rho_1'(t))(3\la_0c(t)-\la)  dt \Big|  \geq     c_m \Big|\int_{-T_\ve}^{\tilde T_\ve} \ve  (f_2(t) - \tilde f_2(t)) (3\la_0c(t)-\la)  dt\Big|  -K\nu. 
$$
We use (\ref{f2}), (\ref{tf2}), and (\ref{LaLb})  to obtain 
$$
\ve  \abs{\int_{-T_\ve}^{\tilde T_\ve}  \frac{( 3\la_0 c(t) - \la)^2}{c^{1/2}(t)}\frac{a'(\ve \rho(t))}{a(\ve \rho(t))}  dt } \leq K \ve^{1/4-1/100} + K\nu.
$$
Now we claim that the quantity in the left side is bounded below independent of $\nu $ and $\ve$, which gives the contradiction, for $\ve$ and $\nu$ small enough. Indeed, from (\ref{todo}), one has
$$
 \ve\int_{-T_\ve}^{\tilde T_\ve}  \frac{( 3\la_0 c(t) - \la)^2}{c^{1/2}(t)}\frac{a'(\ve \rho(t))}{ a(\ve \rho(t))}  dt = \ve\int_{-T_\ve}^{\tilde T_\ve} \frac{( 3\la_0 C(t) - \la)^2}{ C^{1/2}(t)}\frac{a'(\ve P(t))}{ a(\ve P(t))}  dt+ o_\ve(\ve).
$$
First, we consider the case $\la=\la_0$. In this case, from Lemma \ref{ODE} we have $C(t) \equiv 1$ and $P(t) =(1-\la_0)t$. Then
\bee
\ve\int_{-T_\ve}^{\tilde T_\ve} \frac{( 3\la_0 C(t) - \la)^2}{ C^{1/2}(t)}\frac{a'(\ve P(t))}{ a(\ve P(t))}  dt & = &  4\la_0^2 \int_{-T_\ve}^{\tilde T_\ve}  \frac{a'(\ve(1-\la_0) t)}{ a(\ve (1-\la_0)t)} \ve dt \nonu \\
& =& \frac{4\la_0^2}{1-\la_0} \log a(\ve (1-\la_0)t )\Big|_{-T_\ve}^{\tilde T_\ve} = \frac{4\la_0^2}{1-\la_0} \log 2 + o_\ve(1). \eee
It is clear that the last quantity is positive. Now we consider the general case, $\la\neq \la_0$. We have, from (\ref{c}),
\bea
  \ve\int_{-T_\ve}^{\tilde T_\ve} \frac{( 3\la_0 C(t) - \la)^2}{ C^{1/2}(t)}\frac{a'(\ve P(t))}{ a(\ve P(t))}  dt & =&  \frac{(5-m)}{4}\int_{-T_\ve}^{\tilde T_\ve}  \frac{( 3\la_0 C(t) - \la)^2}{C^{3/2}(t)( \la_0 C(t)-\la) } C'(t)  dt \nonu \\
& =& \frac{(5-m)}{4}\int_{1}^{c_\infty}   \frac{( 3\la_0 c - \la)^2}{c^{3/2}( \la_0 c-\la) } dc.\label{etat}
\eea
Note that the term inside the integral has always the same sign, and it is not identically zero for $0<\la<1$, $\la\neq \la_0$. Since (\ref{etat}) is always non zero, independent of $\nu$ and $\ve$, we get the final conclusion. The proof is complete.

\section{The cubic case}\label{8}

Consider now the proof of Theorem \ref{MTL} in the case $m=3$. This case is in some sense degenerate since $f_2\equiv 0$ in (\ref{f2}). Moreover, in this case $\chi_c (y) = \frac 12 yQ_c \in \mathcal S(\R)$ and from (\ref{INT41})-(\ref{INT41v}), one has that  the functionals $J(t)$ and $\tilde J(t)$ are identically zero. This is the reason why we needed to improve the approximate solution $\tilde u(t)$ (cf. Proposition \ref{prop:decomp} and \cite{Mu3}) to obtain a {\bf nonzero} defect in the solution. In this opportunity, a defect is given by the term $f_3(t)\neq 0$ in (\ref{f3}).

So, in order to prove the main result, instead of using the functionals $J(t)$ and $\tilde J(t)$, we consider the scaling laws (\ref{c1}) and (\ref{tc1}). Indeed, we start out following the Step 1 as in the previous section. Then we arrive to the estimate
\be\label{Smaa3}
\|u(t+\mathcal T(t)) - v(t) \|_{H^1(\R)} + |\mathcal T'(t)| \leq  K \nu \ve^{1+1/100},
\ee
valid for all $t\in [-T_\ve, \tilde T_\ve]$. Similarly, one has 
\be\label{aste}
|c(t) - \tilde c(t)|  \leq K \nu \ve^{1+1/100},\qquad |\rho(t) - \tilde \rho(t)| \leq K\nu\ve^{1/100}.
\ee

\begin{lem}\label{MaPa}

For all $t\in [-T_\ve, \tilde T_\ve]$, one has
\bea
 | c_1' (t)-\tilde c_1' (t) | &\leq &   K\ve^{5/2} + K\ve e^{-\ga\ve|\rho(t)|} \Big[\int_\R e^{-\ga |y|} z^2\Big]^{1/2} + K\ve e^{-\ga\ve|\tilde \rho(t)|}\Big[\int_\R e^{-\ga |\tilde y|} \tilde z^2\Big]^{1/2} \nonu\\
& &  \qquad + K\int_\R e^{-\ga |y|} z^2 + K\int_\R e^{-\ga |\tilde y|} \tilde z^2. \label{c1tc1}
\eea
\end{lem}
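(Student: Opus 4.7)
The plan is to derive (\ref{c1tc1}) as an essentially immediate consequence of the a priori modulation bounds (\ref{c1})--(\ref{tc1}) combined with the sharper cubic residual estimate (\ref{SIn3}), via the crude triangle inequality $|c_1'-\tilde c_1'|\leq |c_1'|+|\tilde c_1'|$. In particular, I do not intend to exploit any cancellation between the $u$- and $v$-sides of the problem: the bound $K\ve^{5/2}$ on the right-hand side is in fact loose (the same argument in fact gives $K\ve^3$), and is stated in this cleaner uniform form so that, after time integration over $[-T_\ve,\tilde T_\ve]$, one picks up a contribution of order $\ve^{3/2-1/100}$, safely below $\nu\ve^{1+1/50}$ as needed in Section \ref{8}.

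Concretely, I would first apply (\ref{c1}) at each $t\in[-T_\ve,\tilde T_\ve]$, which controls $|c_1'(t)|$ by the two ``local in $z$'' terms appearing on the right of the lemma, plus $K\big|\int_\R Q_c\,\tilde S[\tilde u]\big|$. Since $m=3$, the sharper residual estimate (\ref{SIn3}) gives
\begin{equation*}
\Big|\int_\R Q_c\,\tilde S[\tilde u]\Big|\;\leq\; K\ve^{3}e^{-\ve\ga|\rho(t)|}+K\ve^{4}\;\leq\; K\ve^{5/2}
\end{equation*}
for $\ve$ small. Because Lemma \ref{ODE} together with (\ref{todo}) confines $c(t)$ to a fixed compact subset of $(0,+\infty)$ independent of $\ve$, the weight $e^{-\ga\sqrt{c}|y|}$ is dominated by $e^{-\ga'|y|}$ after relabeling the exponential constant. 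The same scheme applied to (\ref{tc1}) on the $v$-side, invoking the cubic residual estimate for $\tilde S[\tilde v]$ contained in item (5) of Proposition \ref{prop:decomp1}, produces the matching bound for $|\tilde c_1'(t)|$ with $z,y,\rho$ replaced by $\tilde z,\tilde y,\tilde \rho$.

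Summing the two estimates then yields (\ref{c1tc1}) exactly in the displayed form, with the two symmetric pairs of local terms kept separate so that the later Virial inequalities (\ref{dereta}) and (\ref{dereta2}) can be applied independently on each side. The only minor verification required is that (\ref{SIn3}) holds verbatim on the $v$-side; this is asserted in Proposition \ref{prop:decomp1}(5) and follows from the symmetry substitutions in its sketched proof, which leave the scalar residual integrals invariant. I therefore expect no genuine obstacle: this lemma is a bookkeeping step that repackages the two individual modulation bounds into a single uniform comparison estimate, tailored to be plugged mechanically into the cubic-case contradiction argument of Section \ref{8}.
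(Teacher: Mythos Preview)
Your proposal is correct. The triangle inequality $|c_1'-\tilde c_1'|\leq |c_1'|+|\tilde c_1'|$, combined with (\ref{c1}), (\ref{tc1}) and the cubic residual bound (\ref{SIn3}) (together with its $\tilde v$-analogue from Proposition \ref{prop:decomp1}(5)), indeed yields (\ref{c1tc1}) verbatim after absorbing the bounded factor $\sqrt{c}$ into $\gamma$.

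This is, however, a genuinely shorter route than the paper's own proof. The paper revisits the \emph{identity} behind (\ref{c1}), writes it out for both $c_1'$ and $\tilde c_1'$, subtracts, and then estimates the resulting difference (\ref{C1a})--(\ref{C6}) term by term. In the end none of these pairs is estimated by anything better than the sum of the two absolute values, so no cancellation between the $u$- and $v$-sides is actually exploited; the final bound coincides with what your crude triangle inequality already gives. Your approach buys brevity and transparency at no cost in strength: since in Section \ref{8} the estimate is only used after integrating in time and invoking (\ref{intc1}) and (\ref{dereta3}), any bound of the displayed form suffices, and you correctly observe that the residual contribution is in fact $O(\ve^3)$ rather than $O(\ve^{5/2})$. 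The paper's longer computation would in principle be the right template if one needed to detect genuine cancellation between $c_1'$ and $\tilde c_1'$ (as one does, elsewhere, with $f_3$ versus $\tilde f_3$), but for Lemma \ref{MaPa} itself this refinement is unnecessary.
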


Let us assume the validity of this result and let us conclude the proof of Theorem \ref{MTL} for the cubic case. From \eqref{c1tc1}, \eqref{intc1} and \eqref{dereta3} one has, after integration and using the Cauchy-Schwarz inequality,
$$
\Big|\int_{-T_\ve}^{\tilde T_\ve}(c_1'(t)-\tilde c_1'(t))dt\Big| \leq  o_\ve(\ve).
$$
Therefore, from (\ref{Eqz1}), (\ref{tEqz1}) and (\ref{aste}),
\be\label{boundf3}
\Big|\ve^2 \int_{-T_\ve}^{\tilde T_\ve}f_3(t)dt\Big| \leq K\nu \ve + o_\ve(\ve).
\ee
Note that we have used (\ref{f1}) and (\ref{aste}) to obtain  
$$
\Big|\ve \int_{-T_\ve}^{\tilde T_\ve} (f_1(t)-\tilde f_1(t))dt\Big| \leq K\nu\ve.
$$
Coming back to (\ref{boundf3}), and using (\ref{f3}), one has
\be\label{Salfate}
\abs{\ve^2 \int_{-T_\ve}^{\tilde T_\ve} \frac 1{\sqrt{c(t)}}(c(t)-\la) \frac{a'^2(\ve\rho(t))}{a^2(\ve\rho(t))}dt} \leq K\nu \ve + o_\ve(\ve).
\ee
In what follows, we split the proof in two cases.

\noindent
{\bf First case: $0<\la<\tilde \la$.} From (\ref{rho1})-(\ref{c1}),
\be\label{integrale}
\ve\int_{-T_\ve}^{\tilde T_\ve} \frac{(c(t)-\la)}{\sqrt{c(t)}}  \frac{a'^2}{a^{2}}(\ve \rho(t)) dt \geq \frac{9}{10 \sqrt{c_\infty(\la)}} \int_{-1}^{1} \frac{a'^2}{a^{2}}(s)ds =:\kappa, 
\ee
with $\kappa>0$ independent of $\ve$. We get then $\tilde{\kappa} \ve \leq K\nu \ve  + K\ve^{3/2},$ for some positive constants $K,\tilde \kappa$. By taking $\nu$ small enough, we obtain the desired contradiction. This proves the result in the case $0<\la<\tilde \la$.

\noindent
{\bf Second case: $\tilde \la<\la<1$.} First of all, note that from (\ref{todo}), one has
$$
\ve\int_{-T_\ve}^{\tilde T_\ve} \frac{(c(t)-\la)}{\sqrt{c(t)}}  \frac{a'^2}{a^{2}}(\ve \rho(t)) dt= \ve\int_{-T_\ve}^{\tilde T_\ve} \frac{(C(t)-\la)}{\sqrt{C(t)}}  \frac{a'^2}{a^{2}}(\ve P(t)) dt + o_\ve(\ve).
$$
Now we split the time interval $[-T_\ve, \tilde T_\ve]$ into three disjoints subintervals, as in the proof of \cite[Lemma 3.3]{Mu3}. Let $t_0$ be as in Lemma \ref{ODE}. We have, for $\al>0$ small, independent of $\ve$,
\be\label{Pera3}
  \int_{-T_\ve}^{\tilde T_\ve} \frac{\ve (C(t)-\la)}{\sqrt{C(t)}}  \frac{a'^2}{a^{2}}(\ve P(t)) dt = \Bigg[ \int_{-T_\ve}^{t_0 -\frac \al\ve} + \int_{t_0 -\frac \al\ve}^{t_0 +\frac \al\ve} + \int_{t_0 +\frac \al\ve}^{\tilde T_\ve} \Bigg] \frac{\ve (C(t)-\la)}{\sqrt{C(t)}}  \frac{a'^2}{a^{2}}(\ve P(t)) dt
\ee
A simple computation shows that, inside the interval $[t_0-\frac \al\ve, t_0+\frac \al\ve]$,
$$
C(t) -\la = C(t_0)-\la + C'(t_0) (t-t_0) + O(\ve^2(t-t_0)^2) =O(\al),
$$ 
and thus,
$$
\Big| \int_{t_0 -\frac \al\ve}^{t_0 +\frac \al\ve} \frac{\ve (C(t)-\la)}{\sqrt{C(t)}}  \frac{a'^2}{a^{2}}(\ve P(t)) dt \Big| \leq \frac{K\al^2}{\sqrt{\la}}.
$$
Note that $\la>\tilde \la>0$. On the other hand,
$$
\int_{-T_\ve}^{t_0 -\frac \al\ve}\frac{\ve (C(t)-\la)}{\sqrt{C(t)}}  \frac{a'^2}{a^{2}}(\ve P(t))  dt \leq \frac{1}{\sqrt{C(t_0-\frac \al\ve)}} \int_{\ve P(-T_\ve)}^{\ve P(t_0-\frac\al\ve)}  \frac{a'^2}{a^{2}}(s) ds;
$$
and
$$
\int_{t_0 + \frac \al\ve}^{\tilde T_\ve}\frac{\ve (C(t)-\la)}{\sqrt{C(t)}}  \frac{a'^2}{a^{2}}(\ve P(t)) dt \leq -\frac{1}{\sqrt{C(t_0+\frac \al\ve)}} \int_{\ve P(\tilde T_\ve)}^{\ve P(t_0+\frac\al\ve)}  \frac{a'^2}{a^{2}}(s) ds.
$$
Recall that, by definition of $\tilde T_\ve$ (\ref{timeescape}), one has  $P(\tilde T_\ve) =P(-T_\ve)$. Moreover,
$$
\abs{\int_{\ve P(t_0+ \frac\al\ve)}^{\ve P(t_0-\frac\al\ve)}  \frac{a'^2}{a^{2}}(s) ds} \leq K \ve \big[ P(t_0- \frac\al\ve) - P(t_0+ \frac\al\ve)\big] \leq K \al^3,
$$
since $P'(t_0) =C(t_0)-\la=0$ and $P^{(3)}(t) =C''(t) =O(\ve^2)$. Therefore, we have for some $K>0$,
\bee
(\ref{Pera3}) & \leq & -\Big[\frac{1}{\sqrt{C(t_0+\frac \al\ve)}} -\frac{1}{\sqrt{C(t_0-\frac \al\ve)}}\Big] \int_{\ve P(-T_\ve)}^{\ve P(t_0-\frac\al\ve)}  \frac{a'^2}{a^{2}}(s) ds+ K\al^2 + K\al^3 \\
& \leq &  -K\al \int_{\ve P(-T_\ve)}^{\ve P(t_0)}  \frac{a'^2}{a^{2}}(s) ds + K\al^2 + K\al^3.
\eee
Since $|\ve P(t_0)| \leq K(\la)$ \cite[estimate (3.7)]{Mu3}, and $\ve P(-T_\ve) = -\ve^{-1/100} \ll -K(\la)$, one has
$$
(\ref{Pera3}) \leq  -K\al \int_{-K(\la)-1}^{-K(\la)}  \frac{a'^2}{a^{2}}(s) ds +K\al^2 + K\al^3  \leq  -\tilde K(\la)\al <0,
$$
for some $\tilde K(\la)>0$, and $\al>0$ small, but independent of  $\ve$ and $\nu$. Replacing in (\ref{Salfate}), we get the desired contradiction, provided $\nu$ and $\ve$ are small enough. The proof is complete.

\begin{proof}[Proof of Lemma \ref{MaPa}]
We start from the proof of \cite[identity (4.47)]{Mu3}, where, for each $t\in [-T_\ve, \tilde T_\ve]$, one has
\bee
 c_1' \int_\R Q_c \partial_c \tilde u & = & -   \int_\R Q_c \tilde S[\tilde u] - \rho_1' \int_\R Q_c \partial_\rho \tilde u    -\rho_1' \int_\R  Q_c' z + c_1' \int_\R  \Lambda Q_c z   -\ve^2 f_4 \int_\R  Q_c' z  \\
 & & + \ve( f_1 +\ve f_3)\int_\R  \Lambda Q_c z    + \int_\R Q_c'  a(\ve x) [ 3\tilde u z^2 + z^3 ] + 3 \int_\R Q_c'  [a(\ve x)\tilde u^{2} - Q_c^{2}]z. 
 \eee
This expression leads to the bound (\ref{c1}) above. We also recall that a completely similar expression holds for $\tilde c_1(t)$.  On the one hand, from (\ref{rho1}) with $m=3$, and (\ref{SIn3}),
\be\label{rho1m3}
|\rho_1'|  \leq  K \int_\R e^{-\ga |y|}z^2(t) + K\ve e^{-\ve\ga|\rho(t)|} \Big[  \int_\R e^{-\ga |y|}z^2(t) \Big]^{1/2} + K\ve^3,
\ee
and similarly for $|\tilde \rho_1'|$. Now we compare both identities, using (\ref{rho1m3}), to obtain,
\bea
  c_1' \Big[ \int_\R Q_c \partial_c \tilde u& &  - \int_\R  \Lambda Q_c z \Big]  - \tilde c_1' \Big[ \int_\R Q_{\tilde c} \partial_{\tilde c} \tilde v - \int_\R  \Lambda Q_{\tilde c} \tilde z\Big]   =  -   \int_\R Q_c \tilde S[\tilde u]  +   \int_\R Q_{\tilde c} \tilde S[\tilde v]  \label{C1a} \\
& &\qquad    - \rho_1' \Big[  \int_\R Q_c \partial_\rho \tilde u +\int_\R  Q_c' z \Big] + \tilde\rho_1' \Big[ \int_\R Q_{\tilde c} \partial_{\tilde \rho} \tilde v+ \int_\R  Q_{\tilde c}' \tilde z \Big] \label{C1} \\
&  &\qquad        + \ve f_1 \int_\R  \Lambda Q_c z   - \ve \tilde f_1 \int_\R  \Lambda Q_{\tilde c} \tilde z   \label{C2b}\\
 & &\qquad  -\ve^2 f_4 \int_\R  Q_c' z + \ve^2 \tilde f_4 \int_\R  Q_{\tilde c}' \tilde z + \ve^2 f_3\int_\R  \Lambda Q_c z   - \ve^2 \tilde f_3\int_\R  \Lambda Q_{\tilde c} \tilde z  \label{C3} \\
 & &\qquad   + \int_\R Q_c'  a(\ve x) [ 3\tilde u z^2 + z^3 ]  - \int_\R Q_{\tilde c}'  a(\ve x) [ 3\tilde v \tilde z^2 + \tilde z^3 ] \label{C5} \\
 & & \qquad + 3 \int_\R Q_c'  [a(\ve x)\tilde u^{2} - Q_c^{2}]z - 3 \int_\R Q_{\tilde c}'  [a(\ve x)\tilde v^{2} - Q_{\tilde c}^{2}]\tilde z. \label{C6}
\eea
We first deal with the right hand side of (\ref{C1a}). Since $Q_c$ and $Q_{\tilde c}$ are Schwartz functions, we have from (\ref{SIn3}),
$$
\abs{\int_\R Q_c \tilde S[\tilde u] - \int_\R Q_{\tilde c} \tilde S[\tilde v] }  \leq K \ve^3 e^{-\ve\ga|\rho(t)|} + K\ve^4.
$$
On the other hand,
\bee
\int_\R Q_c \partial_\rho \tilde u & = & \int_\R Q_c \partial_\rho \Big[ \eta_\ve(y) (\frac{Q_c(y)}{a^{1/2}(\ve \rho)} +\ve d(t) A_c(y) +\ve^2 B_c(t,y) ) \Big] \\%=O(\ve e^{-\ve\ga|\rho(t)|}) + O(\ve^2), 
& =& - \int_\R  \eta_\ve Q_c \Big[ \ve a'(\ve \rho) \frac{Q_c(y)}{2a^{3/2}(\ve \rho)} + \frac{Q_c'(y)}{a^{1/2}(\ve \rho)}  +  \ve d(t) A_c'(y)  \Big]  + O(\ve^{2}) = O(\ve e^{-\ve\ga|\rho(t)|}) + O(\ve^2),
\eee
and similarly with the term $\int_\R Q_{\tilde c} \partial_{\tilde \rho} \tilde v$. Therefore, from (\ref{rho1m3}), we get
$$
|(\ref{C1})| \leq  K \ve^{1/2}  \int_\R e^{-\ga |y|}z^2(t) +K \ve^{1/2}  \int_\R e^{-\ga |\tilde y|}\tilde z^2(t) + K\ve^{5/2}.
$$
Now we deal with (\ref{C2b}). We have
$$
\abs{\ve f_1\int_\R \Lambda Q_c z -\ve \tilde f_1 \int_\R \Lambda Q_{\tilde c} \tilde z}  \leq K \ve e^{-\ga \ve|\rho(t)|} \Big[\int_\R z^2 e^{-\ga |y|} \Big]^{1/2}+ K \ve e^{-\ga \ve|\tilde \rho(t)|} \Big[\int_\R \tilde z^2 e^{-\ga |\tilde y|} \Big]^{1/2}. 
$$
The estimate of (\ref{C3}) is easy: from (\ref{INT41}) and (\ref{INT41v}), one has $|(\ref{C3})| \leq  K \ve^{5/2}$. Now we consider the terms in (\ref{C5}). We have
$$
 \abs{\int_\R Q_c'  a(\ve x) [ 3\tilde u z^2 + z^3 ]  - \int_\R Q_{\tilde c}'  a(\ve x) [ 3\tilde v \tilde z^2 + \tilde z^3 ] }   \leq K \Big[ \int_\R e^{-\ga|y|}z^2 + \int_\R e^{-\ga|\tilde y|}\tilde z^2\Big].
$$
Since $\|a(\ve x) \tilde u^2 -Q_c^2\|_{L^\infty(\R)} \leq K\ve e^{-\ga\ve |\rho(t)|} + K\ve^2,$ and similarly in the case of $\tilde v$, one has
$$
 |(\ref{C6})|  \leq K\ve e^{-\ga\ve |\rho(t)|}  \Big[ \int_\R e^{-\ga|y|} z^2\Big]^{1/2} +K \ve  e^{-\ga\ve |\rho(t)|}  \Big[ \int_\R e^{-\ga|\tilde y|} \tilde z^2\Big]^{1/2} + K\ve^{5/2}. 
$$
Finally, we deal with the left hand side of (\ref{C1a}):
$$
\int_\R Q_c \partial_c \tilde u -\int_\R \Lambda Q_c z = 2\theta \frac{c^{2\theta-1}(t)}{\tilde a(\ve\rho(t))} \int_\R Q^2 + O(\ve^{1/2})  \geq k_0 >0,\quad  (\theta = \frac 1{m-1}-\frac 14>0).
$$
Gathering the above estimates, we get finally (\ref{c1tc1}).
\end{proof}

\medskip

%\appendix

\end{document}